\newcommand{\oplam}{\mbox{\Large $\curlywedge$}}
\newcommand{\id}{\operatorname{id}}
\newcommand{\supp}{\operatorname{supp}}
\newcommand{\cB}{{\mathcal B}}
\newcommand{\cF}{{\mathcal F}}
\newcommand{\cG}{{\mathcal G}}
\newcommand{\cM}{{\mathcal M}}
\newcommand{\cMG}{\cM^{\scriptscriptstyle G}}
\newcommand{\LL}{{\mathscr L}}
\newcommand{\BB}{{\mathscr B}}
\newcommand{\R}{{\mathbbm R}}
\newcommand{\N}{{\mathbbm N_0}}
\newcommand{\Z}{{\mathbbm Z}}
\newcommand{\A}{{\mathbbm A}}
\newcommand{\T}{{\mathbbm T}}
\newcommand{\X}{{\mathbbm X}}
\newcommand{\1}{{\mathbbm 1}} 
\newcommand{\inn}{\operatorname{int}}
\newcommand{\card}{\operatorname{card}}
\newcommand{\hx}{{\hat{x}}}
\newcommand{\hX}{{\hat{X}}}
\newcommand{\hXmax}{\hX_{max}}
\newcommand{\hXmaxG}{\hX_{max}^G}
\newcommand{\hT}{\hat{T}}
\newcommand{\Y}{{\mathcal Y}}
\newcommand{\muW}{\mu_{\scriptscriptstyle W}}
\newcommand{\nuW}{\nu_{\scriptscriptstyle W}}
\newcommand{\nuWzero}{\nu_{\scriptscriptstyle W_0}}
\newcommand{\nuWbdzero}{\nu_{\scriptscriptstyle \partial W_0}}
\newcommand{\nuWG}{\nu_{\scriptscriptstyle W}^{\scriptscriptstyle G}}
\newcommand{\nuWGprime}{\nu_{\scriptscriptstyle W'}^{\scriptscriptstyle G}}
\newcommand{\nuWGzero}{\nu_{\scriptscriptstyle W_0}^{\scriptscriptstyle G}}
\newcommand{\GnuW}{{\cG\nuW}}
\newcommand{\GnuWG}{{\cG\nuWG}}
\newcommand{\MW}{\cM_{\scriptscriptstyle W}}
\newcommand{\MWprime}{\cM_{\scriptscriptstyle W'}}
\newcommand{\MWzero}{\cM_{\scriptscriptstyle W_0}}
\newcommand{\GMW}{\cG\cM_{\scriptscriptstyle W}}
\newcommand{\MWG}{\cM^{\scriptscriptstyle G}_{\scriptscriptstyle W}}
\newcommand{\MWGprime}{\cM^{\scriptscriptstyle G}_{\scriptscriptstyle W'}}
\newcommand{\MWGzero}{\cM^{\scriptscriptstyle G}_{\scriptscriptstyle W_0}}
\newcommand{\GMWG}{\cG\cM^{\scriptscriptstyle G}_{\scriptscriptstyle W}}
\newcommand{\piG}{\pi^{\scriptscriptstyle G}}
\newcommand{\piH}{\pi^{\scriptscriptstyle H}}
\newcommand{\piGH}{\pi^{\scriptscriptstyle G\times H}}
\newcommand{\pihX}{\pi^{\scriptscriptstyle \hX}}
\newcommand{\pihXG}{\pi^{\scriptscriptstyle \hX\times G}}
\newcommand{\CW}{C_{\scriptscriptstyle W}}
\newcommand{\CWprime}{C_{\scriptscriptstyle W'}}
\newcommand{\CWzero}{C_{\scriptscriptstyle W_0}}
\newcommand{\CWbdzero}{C_{\scriptscriptstyle \partial W_0}}
\newcommand{\ZW}{Z_{\scriptscriptstyle W}}
\newcommand{\ZWbdzero}{Z_{\scriptscriptstyle \partial W_0}}
\newcommand{\0}{\underline{0}}
\newcommand{\QM}{Q_{\cM}}
\newcommand{\QGM}{Q_{\cG\cM}}
\newcommand{\QMG}{Q_{\cM^G}}
\newcommand{\QGMG}{Q_{\cG\cM^G}}
\newcommand{\oneone}{1-1\xspace}
\newcommand{\dL}{\mathrm{dens}(\LL)}
\newtheorem {definition}{Definition}[section] 
\newtheorem {lemma}[definition]{Lemma}
\newtheorem{theorem}{Theorem}
\newtheorem {bemerkung}[definition]{Remark}
\newtheorem{proposition}[definition]{Proposition}
\newtheorem {corollary}{Corollary}
\newtheorem{beispiel}[definition]{Example}
\newtheorem{frage}[definition]{Problem}
\newenvironment{remark} {\begin{bemerkung} \normalfont }{\end{bemerkung}}
\newenvironment{example} {\begin{beispiel} \normalfont }{\end{beispiel}}
\begin{document}

\title{Dynamics on the graph of the torus parametrisation
%\\(Draft version, please do not distribute)
}
\author{Gerhard Keller and Christoph Richard\;
\thanks{These notes profited enormously from talks by and/or discussions with Michael Baake, Tobias Hartnick, Johannes Kellendonk,  Mariusz Lema\'n{}czyk, Daniel Lenz, Tobias Oertel-J\"ager and Nicolae Strungaru, from several inspiring workshops in the framework of the DFG Scientific Network ``Skew Product Dynamics and Multifractal Analysis'' organised by Tobias Oertel-J\"ager, and finally from very helpful and supporting comments of an anonymous referee.}}
\affil{Department Mathematik, Universit\"at Erlangen-N\"urnberg}
\date{\today}

%%%%%%%%%%%%%%%%%%%%%%%%%%%%%%%%%%%%%%%%

\maketitle

%%%%%%%%%%%%%%%%%%%%%%%%%%%%%%%%%%%%%%%%

\begin{abstract}
Model sets are projections of certain lattice subsets. It was realised by Moody that dynamical properties of such a set are induced from the torus associated with the lattice. We follow and extend this approach by studying dynamics on the graph of the map which associates lattice subsets to points of the torus and then transferring the results to their projections.
This not only leads to transparent proofs of known results on model sets, but we also obtain new results on so-called weak model sets. In particular we  prove pure point dynamical spectrum for the hull of a weak model set of maximal density together with the push forward of the torus Haar measure under the torus parametrisation map, and we derive a formula for its pattern frequencies.
\end{abstract}

\renewcommand{\thefootnote}{\fnsymbol{footnote}} 
\footnotetext{\emph{Key words:} cut-and-project scheme, (weak) model set, torus parametrisation, invariant graph, topological joining, Mirsky measure, square-free integers, $\BB$-free systems}
\footnotetext{\emph{MSC 2010 classification:} 52C23, 37A25, 37B10, 37B50 }     
\renewcommand{\thefootnote}{\arabic{footnote}} 

\section{Introduction}

Let $G$ and $H$ be locally compact second countable abelian groups like, e.g., $\Z^d$ or $\R^d$. Each pair $(\LL,W)$, consisting of a lattice $\LL\subset G\times H$ and a relatively compact subset $W$ of $H$, also called the window, defines a weak model set $\Lambda(\LL,W)$ as the set of all points $x_G\in G$ for which there exists a point $x_H\in W$ such that $(x_G,x_H)\in\LL$. Under additional assumptions on $\LL$ and $W$, the structure of such sets has attracted much attention in the mathematics and physics literature.

Model sets, which satisfy $\inn(W)\ne\emptyset$, have been introduced by Meyer \cite{Meyer70,Meyer72} in the context of number theory and harmonic analysis. After the discovery of quasicrystal alloys, model sets as a mathematical abstraction of these structures have been advertised and developed by Moody, see e.g.~\cite{Moody97,Moody00}. We also mention the recent  comprehensive
monograph by Baake and Grimm \cite{BaakeGrimm13}. The name \textit{weak model set} was coined by Moody \cite{Moody2002}. In fact weak model sets have been initially studied by Schreiber \cite{Schreiber71,Schreiber73}, see also \cite{HuckRichard15} for background.

For a given cut-and-project scheme $(G,H,\LL)$ with window $W$, the torus $\hX$ associated with the lattice $\LL$ may be used to parametrise model sets  arising from translations in $G$ and shifts of the window. This is named the \textit{torus parametrisation} in \cite{BHP97, HRB97}, which was also
studied in \cite{Robinson1996, Schlottmann00, FHK2002, Robinson2007, BLM07, BHP16}. For weighted model sets with continuous compactly supported weight functions, it was investigated in \cite{LenzRichard07}. Some results about the torus parametrisation of general weak model sets appear in \cite{Moody2002}.

In this paper we consider weak model sets with compact windows, along with their associated dynamical systems, also called their hulls. The hull is the vague translation orbit closure of the weak model set, if one identifies the point set with a measure which puts unit mass to every point. In contrast to previous approaches, we will study hulls in the space $\cM$ of locally finite measures on $G\times H$ instead of $G$ and in an even larger space, namely on the graph of the torus parametrisation, which takes values in $\hX\times \cM$. This highlights connections with the dynamics of skew-product dynamical systems with monotone fibre maps, because the latter system is of this type. Our approach
simplifies many arguments and makes the origin of certain standard assumptions on model sets more transparent, which only enter when projecting from $G\times H$ to $G$. 
Extensions to relatively compact windows are discussed as well.

A central result in the theory of model sets with topologically regular windows is almost automorphy of their hull, if the window boundaries have empty intersection with the projected lattice, see Robinson \cite{Robinson2007, Robinson1996, Robinson2004} and also \cite{FHK2002, ABKL14}. We give an alternative proof of this result by our abstract dynamical systems approach. We will however not investigate in this paper how almost automorphy characterises certain classes of model sets, compare \cite[Thm.~3.16]{ABKL14}.
On the measure theoretic side,  a central result by Schlottmann~\cite{Schlottmann00}
relates to so-called regular model sets. These are weak model sets whose window boundaries have vanishing Haar measure. Schlottmann's result expresses that the hull of a regular model set has pure point dynamical spectrum. This, in turn, implies that any regular model set has pure point diffraction spectrum \cite{LMS02, BaakeLenz2004}. A generalisation for weak model sets was obtained by Moody \cite{Moody2002}. His result, when restricted to weak model sets with compact window, states that ``almost all'' weak model sets have pure point diffraction spectrum and so-called maximal density. The importance of weak model sets of maximal density was realised recently by Strungaru \cite{S14}, who argued that such model sets have pure point diffraction spectrum. Within our setup, we can prove that the hull of  any weak model set of maximal density, equipped with the push forward of the torus Haar measure by the torus parametrisation, has pure point dynamical spectrum. Our arguments follow from structural assumptions on the torus parametrisation and from a careful revision of Moody's arguments. They crucially rely on dynamical properties of weak model sets of maximal density. This will be compared to the recent work by Baake, Huck and Strungaru \cite{BaakeHuckStrungaru15}, who obtain similar dynamical results for weak model sets with a non-empty relatively compact window, which are of maximal or minimal density.  Their approach is approximation with regular model sets. 

There are many examples of point sets in $G$ that have a - sometimes hidden - description as weak model set, see e.g.~the monograph \cite{BaakeGrimm13}.
We do not consider here the important question of reconstructing the internal space, the lattice and the window from the point set, but refer to the results Meyer \cite[Ch.~II.5]{Meyer72}, Schreiber \cite[Thm.~2]{Schreiber73}, Schlottmann \cite[Thm.~6]{Schlottmann1998} and Aujogue~\cite[Thm.~3.16]{ABKL14}. Compare the approach by Baake and Moody \cite{BaakeMoody2004}, which is based on the autocorrelation of the point set, and which has recently been revisited by Strungaru \cite{Str15}.

In fact certain such weak model sets of maximal density have recently attracted attention due to their intimate connection with the M\"obius function from number theory. These are the $k$-free lattice points and, more generally, the $\BB$-free systems, see \cite{Sarnak2011, BaakeHuck14,ALR2013} and references therein. For the visible lattice points, pure point diffraction spectrum was shown already in \cite{BMP00}. Pure point dynamical spectrum for the hull of the square-free integers was shown only recently in \cite{CS13} without referring to weak model sets, and in \cite{BaakeHuck14} using weak model sets. Whereas these results were obtained by explicit calculation, 
pure point dynamical spectrum follows from structural arguments within our approach. This indicates that one may take advantage of the underlying weak model set structure in order to further analyse the simplex of invariant probability measures for these systems.

The main contribution of this paper is the systematic use of the dynamical system that arises from the graph of the torus parametrisation $\nuW:\hX\to\cM$, where $\nuW(\hx)$ is the configuration on $G\times H$ defined by the window $W$ and the lattice $\LL$ shifted by $\hx=x+\LL$. It has support $\supp(\nuW(\hx))=(x+\LL)\cap(G\times W)$, as illustrated in Figure~\ref{figure:cpscheme}. This point of 
view separates most dynamical considerations from purely model dependent technical problems that are unavoidable when passing from configurations in $G\times H$ to their projections in $G$, see Figure~\ref{figure:cpscheme}. \footnote{In the existing literature the term torus parametrisation is mostly used for the map $\nuWG$ illustrated in  Figure~\ref{figure:cpscheme} on p.~\pageref{figure:cpscheme} (or for its inverse, when it exists), so that the purely dynamical aspects are always intertwined with problems solely due to the passage from $G\times H$ to $G$.}

\begin{figure}[tb]
\begin{center}
\begin{minipage}[b]{\textwidth}
\center{\hspace{6ex}\epsfig{file=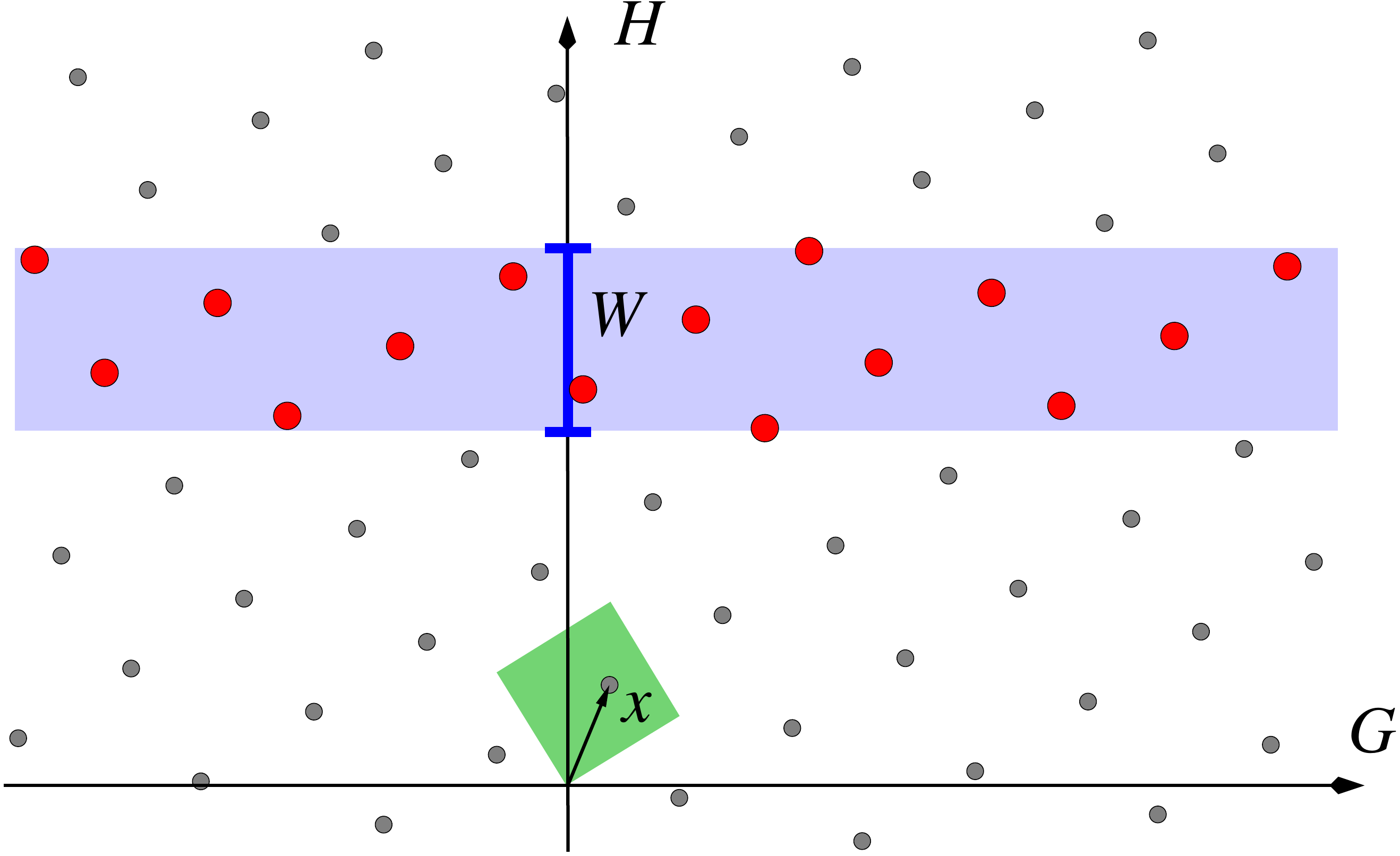,width=6cm,height=4cm}
\hfill\epsfig{file=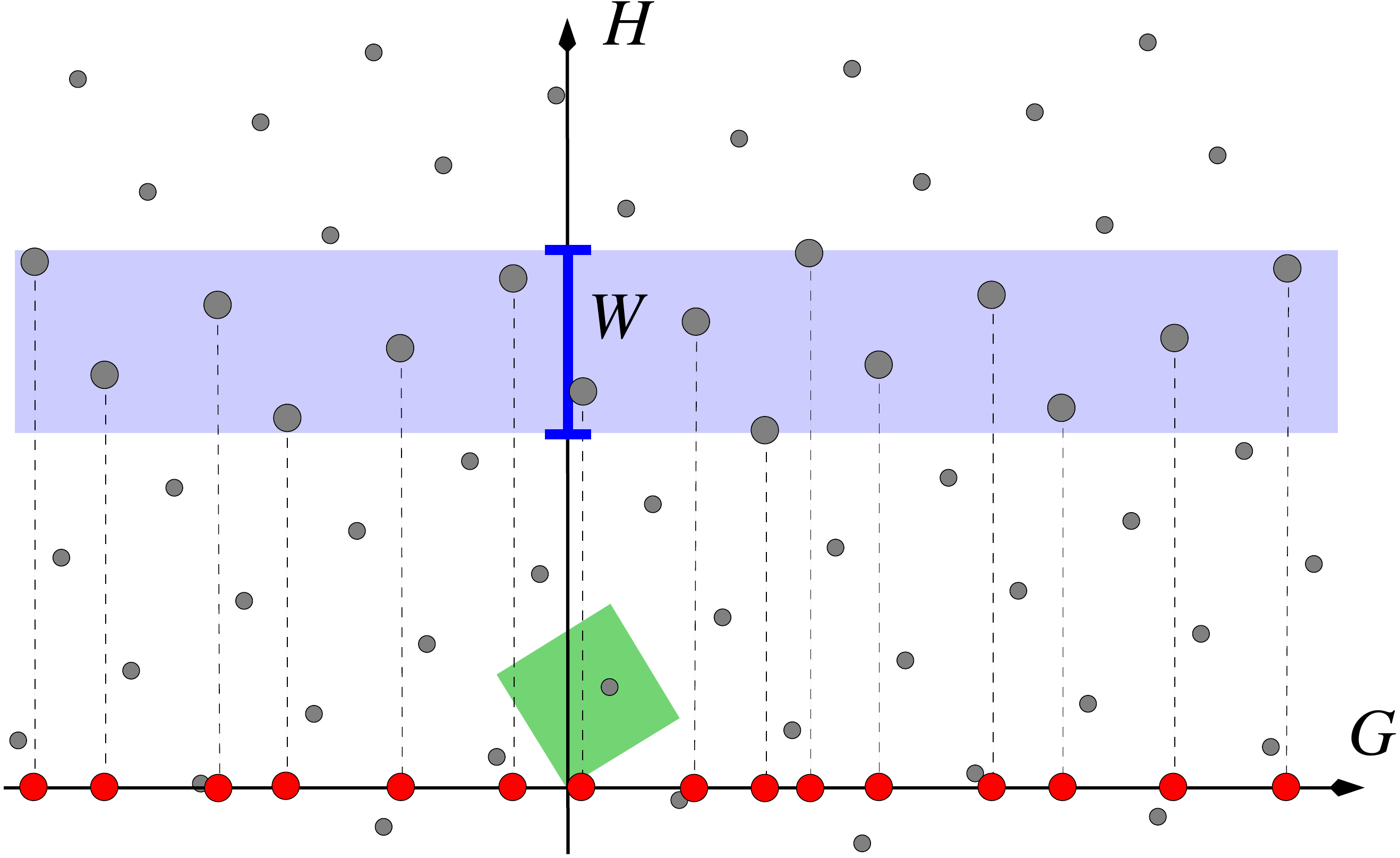,width=6cm,height=4cm}\hspace{6ex}}\\
\hspace*{0.6cm}
Red/dark dots: support of $\nuW(\hx)$
\hspace*{3.4cm}
Red/dark dots: support of $\nuWG(\hx)$
\end{minipage}
\end{center}
\caption{Shifted lattice $\LL$ (black dots), fundamental domain of $\LL$ (green/gray), and the supports of $\nuW(\hx)$ and $\nuWG(\hx)$.\label{figure:cpscheme}}
\end{figure}

On the general dynamical level (that is before passing to configurations on $G$)
\begin{itemize}
\item we obtain as a warm-up some relatives to well known topological results
on maximal equicontinuous factors and the lack of weak mixing (Theorem~\ref{theo:projection-top}),
\item we can introduce a general notion of Mirsky measures, namely the push forward of Haar measure on $\hX$ under the map $\nuW$, compare \cite{Abdalaoui2014,CS13,Kulaga-Przymus2014,Sarnak2011},
\item we show that the systems equipped with this Mirsky measure have  pure point dynamical spectrum (Theorem~\ref{theo:projection-mth}),
\item we prove strict ergodicity when $m_H(\partial W)=0$ (Theorem~\ref{theo:projection-mth}),
\item we identify the Mirsky measure as the unique invariant measure 
with maximal density for typical configurations (Theorem~\ref{theo:Moody-extension}),
\item we show that the configurations with maximal density are precisely the generic points for the Mirsky measure (Theorem~\ref{theo:Moody-topogical}),
\item and we deduce from this a formula for the pattern frequencies of configurations with maximal density (Remark~\ref{remark:patterns}), which is also discussed in \cite[Rem.~5]{BaakeHuckStrungaru15}.
\end{itemize}
While the measure theoretic assertions from this list ``survive'' the passage to configurations on $G$ even if the relevant projection is not \oneone, some finer information can be transferred in this way only if that projection is \oneone when restricted to sufficiently large subsets. This issue is discussed in Section~\ref{sec:examples} for
model sets with interval windows, topologically regular windows
and $\BB$-free systems.

In the next section we define the basic objects of this paper and describe the relations between the various dynamical systems entering the scene. Section~\ref{sec:main-results}
contains the main theorems of this paper as well as a number of auxiliary results that elucidate our approach to study the dynamics on the graph of $\nuW$.
The proofs of most results are deferred to Sections~\ref{sec:basics} - \ref{sec:Moody-proof}. We finish with an outlook to further perspectives of our approach in Section~\ref{sec:outlook}.

\section{The setting}
\subsection{Assumptions and notations}\label{assnot}
Certain spaces and mappings are needed for the construction of weak model sets. The following assumptions will be in force in any of our statements below.
\begin{enumerate}[(1)]
\item $G$ and $H$ are \emph{locally compact second countable abelian groups} with Haar measures $m_G$ and $m_H$. Then the product group $G\times H$ is locally compact second countable abelian as well, and we choose $m_{G\times H}=m_G\times m_H$ as Haar measure on $G\times H$. 
\item $\LL\subseteq G\times H$ is a \emph{cocompact lattice}, i.e., a discrete subgroup whose quotient space $(G\times H)/\LL$ is compact. Thus $(G\times H)/\LL$ is a compact second countable abelian group.
Denote by $\piG:G\times H\to G$ and $\piH:G\times H\to H$ 
the canonical projections. We assume that 
$\piG|_\LL$ is \oneone and that
$\piH(\LL)$ is dense in $H$.\footnote{Denseness of $\piH(\LL)$ can be assumed without loss of generality by passing from $H$ to the closure of $\piH(\LL)$. In that case $m_H$ must be replaced by $m_{\,\overline{\piH(\LL)}}$.} 
\item $G$ acts on $G\times H$ by translation: $T_gx=(g,0)+x$.
\item Let $\hX:=(G\times H)/\LL$.  As we assumed that $\hX$ is compact, there is a measurable relatively compact fundamental domain $X\subseteq G\times H$ such that $x\mapsto x+\LL$ is a bijection between $X$ and $\hX$. Elements of $G\times H$ (and hence also of $X$) are denoted as $x=(x_G,x_H)$, elements of $\hX$ as 
$\hx$ or as $x+\LL=(x_G,x_H)+\LL$, when a representative $x$ of $\hx$ is to be stressed. We normalise the Haar measure $m_\hX$ on $\hX$ such that $m_\hX(\hX)=1$. Thus $m_\hX$ is a probability measure. 

\item The \emph{window} $W$ is a compact subset of $H$. This is a more restrictive assumption than the one made originally for weak model sets in \cite{Moody2002}, where only relative compactness and measurability of $W$ are assumed. 
We choose to assume compactness of $W$, because this guarantees strong structural results and a more coherent presentation. Results for relatively compact windows are however discussed in Remark~\ref{rem:rc}. (Recall that (full) model sets according to \cite{Moody2002} are weak model sets where $\inn(W)\ne\emptyset$.)
\end{enumerate}

\subsection{Consequences of the assumptions}\label{en:ass}
We list a few facts from topology and measure theory that follow from the above assumptions.  We will call any neighborhood of the neutral element in an abelian topological group a \textit{zero neighborhood}.

\begin{enumerate}[(1)]
\item Being locally compact second countable abelian groups, $G$, $H$ and $G\times H$  are  metrisable with a translation invariant metric with respect to which they are complete metric spaces. In particular they have the Baire property.
As such groups are $\sigma$-compact, $m_G$, $m_H$ and $m_{G\times H}$ are $\sigma$-finite.
\item As $G\times H$ is $\sigma$-compact, the lattice $\LL\subseteq G\times H$ is at most countable.  Note that $G\times H$ can be partitioned by shifted copies of the relatively compact fundamental domain $X$. This means that $\LL$ has a positive finite point density $\dL=1/m_{G\times H}(X)$. We thus have $m_\hX(\hat A)=\dL\cdot m_{G\times H}(X\cap (\pihX)^{-1}(\hat A))$ for any measurable $\hat A\subseteq \hX$, where $\pihX: G\times H\to \hX$ denotes the quotient map. As a factor map between topological groups, $\pihX$ is open.
\item As $\LL$ is a discrete group, there is an open
zero neighbourhood $V\subseteq G\times H$ whose closure is compact and for which all sets $\overline V+x$ $(x\in \LL)$ are pairwise disjoint.
\item $\LL$ acts on $(H,m_H)$ by $h\mapsto\ell_H+ h$ metrically transitively, i.e., for every measurable $A\subseteq H$ such that $m_H(A)>0$ there exist at most countably many $\ell^i\in \LL$ such that $m_H((\bigcup_i (\ell_H^i+A))^c)=0$,  see \cite[Chapter 16, Example 1]{Kharazishvili2009}.
\item The action $\hT_g:\hx\mapsto(g,0)+\hx$ of $G$ on $\hX$ is minimal. Indeed:
let $x+\LL,y+\LL\in\hX$ be arbitrary. Choose a sequence $(\ell_n)_n$ from $\LL$ such that $\ell_{n,H}\to y_H-x_H$ and let $g_n=y_G-\ell_{n,G}-x_G$. Then
\begin{displaymath}
\hT_{g_n}(x+\LL)
=
(g_n,0)+x+\LL
=
(g_n,0)+x+\ell_n+\LL
=
(y_G,\ell_{n,H}+x_H)+\LL
\to
y+\LL\ .
\end{displaymath}
This shows that the $G$-orbit of $x+\LL$ is dense in $\hX$, i.e., minimality of the action $\hT$. This implies that $\hX$ with its natural action is uniquely ergodic, see e.g.~\cite[Prop.~1]{Moody2002}.

\item Denote by $\cM$ and $\cMG$ the spaces of all locally finite measures on $G\times H$ and $G$, respectively. They are endowed with the topology of vague convergence. 
As $G$ and $G\times H$ are complete metric spaces, this is a Polish topology, see \cite[Theorem A.2.3]{Kallenberg2001}. 
\end{enumerate}

\subsection{The objects of interest}
The pair $(\LL,W)$ assigns to each point $\hx\in \hX$ a discrete point set in $G\times H$. 
We will identify such point sets $P$ with the measure $\sum_{y\in P}\delta_y\,\in\cM$
and call these objects \emph{configurations}. More precisely:
\begin{enumerate}[(1)]
\item For $\hx=x+\LL\in\hX$ define the configuration
\begin{equation}\label{eq:nuW-def}
\nuW(\hx):=\sum_{y\in (x+\LL)\cap(G\times W)}\delta_y\ .
\end{equation}
It is important to understand $\nuW$ as a map from $\hX$ to $\cM$. 
The canonical projection $\piG:G\times H\to G$ projects measures $\nu\in\cM$ to measures $\piG_*\nu$ on $G$ defined by $\piG_*\nu(A):=\nu((\piG)^{-1}(A))$. We abbreviate 
\begin{equation}\label{eq:nuWG-def}
\nuWG:=\piG_*\circ\nuW:\hX\to\cMG
\end{equation}
The configurations $\nuWG(\hx)$ on $G$ are sometimes called \emph{Dirac combs} in the literature. See Figure~\ref{figure:cpscheme} for a visualisation of these two maps.
The set $\nuWG(\hX)$ is called ``the model set family associated to the window $W$'' and denoted by ${\mathscr M}(W)$ in \cite[Eqn.~(4--2)]{Robinson2007}. 
\item Denote the graph of $\nuW:\hX\to\cM$ by $\GnuW=\{(\hx,\nuW(\hx)): \hx\in\hX\}$
and that of $\nuWG:\hX\to\cMG$ by $\GnuWG=\{(\hx,\nuWG(\hx)): \hx\in\hX\}$.
\item\label{item:spaces}
 Denote by 
\begin{itemize}
\item[-] $\MW$ the vague closure of $\nuW(\hX)$ in $\cM$,
\item[-] $\GMW$ the vague closure of $\GnuW$ in $\hX\times \cM$,
\item[-] $\MWG$ the vague closure of $\nuWG(\hX)$ in $\cMG$,
\item[-] $\GMWG$ the vague closure of $\GnuWG$ in $\hX\times \cMG$.
\end{itemize}
The group $G$ acts continuously by translations on all these spaces:
\begin{displaymath}
(S_g\nu)(A):=\nu(T_g^{-1}A)=\nu(T_{-g}A), \quad S_g(\hx,\nu):=(\hT_g\hx,S_g\nu)\ .
\end{displaymath}
Here we used the same notation  $S_g$ for translations on $\MW$, $\MWG$, $\GMW$ and $\GMWG$, as the meaning will always be clear from the context. 
\item As $\nuW(\hx)(T_{-g}A)=(S_g\nuW(\hx))(A)=\nuW(\hT_g\hx)(A)$ by Lemma~\ref{lemma:basic-0}, it is obvious that all $\nuW(\hx)$ are uniformly translation bounded, and it follows from \cite[Theorem 2]{BaakeLenz2004} that all four spaces from item (\ref{item:spaces}) are compact.
\end{enumerate}

\subsection{Spaces and factor maps}
On the group level we will work with the following sets and canonical projections:
\begin{equation*}
\begin{tikzcd} 
\ &\hX\times G\times H \arrow[swap]{ld}{\pihX}\arrow{dd}{\pihXG}\arrow{r}{\piGH}& G\times H \arrow{dd}{\piG}\\
\hX  &\  & \ \\  
\ &\hX\times G\arrow[swap]{r}{\piG}\arrow{lu}{\pihX} & G
\end{tikzcd}
\end{equation*}
The reader may notice the minor ambiguity in the use of $\piG$ and $\pihX$.
All these projections commute with the respective actions of $T$ and $\hT$.
 On the level of the spaces $\MW$, $\MWG$, $\GMW$ and
$\GMWG$, this translates to
\begin{equation*}
\begin{tikzcd} 
\ &\GnuW \arrow[swap]{ld}{\pihX_*}\arrow{dd}{\pihXG_*}\arrow{r}{\piGH_*}& \nuW(\hX)\arrow{dd}{\piG_*}\\
\hX \arrow[dotted]{rru}{\nuW\;} \arrow[dotted]{rrd}{\;\nuWG}&\  & \ \\  
\ &\GnuWG\arrow[swap]{r}{\piG_*}\arrow{lu}{\pihX_*} & \nuWG(\hX)
\end{tikzcd}
\begin{tikzcd} 
\ &\GMW \arrow[swap]{ld}{\pihX_*}\arrow{dd}{\pihXG_*}\arrow{r}{\piGH_*}& \MW\arrow{dd}{\piG_*}\\
\hX \arrow[dotted]{rru}{\nuW\;} \arrow[dotted]{rrd}{\;\nuWG}&\  & \ \\  
\ &\GMWG\arrow[swap]{r}{\piG_*}\arrow{lu}{\pihX_*} & \MWG
\end{tikzcd}
\end{equation*}
where we use the standard notation $\pi_*(\nu)(A):=\nu(\pi^{-1}(A))$.
\begin{lemma}\label{lemma:diagrams-basic}
Each solid arrow in the r.h.s. diagram represents a factor between compact dynamical systems, that is the maps are continuous, surjective and commute with the actions $S$ respectively $\hT$. The maps $\nuW$ and $\nuWG$ represented by the dotted arrow are neither onto nor continuous, in general. Yet they commute with the other maps.
\end{lemma}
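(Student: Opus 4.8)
The plan is to verify, for each solid arrow, the three defining properties of a factor map between compact dynamical systems — continuity, surjectivity, and commutation with the $G$-action — using that compactness of $\MW,\GMW,\MWG,\GMWG$ and continuity of the $G$-actions on them are already available. The first step is to sort the solid arrows into two types: coordinate projections of $\hX\times\cM$ — forgetting the measure coordinate (the two arrows labelled $\pihX_*$) or forgetting the $\hX$-coordinate (the arrows $\piGH_*\colon\GMW\to\MW$ and $\piG_*\colon\GMWG\to\MWG$) — and push-forwards of the measure coordinate under the projection $\piG\colon G\times H\to G$ (the arrow $\piG_*\colon\MW\to\MWG$ and $\pihXG_*=\id_\hX\times\piG_*\colon\GMW\to\GMWG$). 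The coordinate projections are plainly continuous, and they commute with the action since $S_g$ on $\hX\times\cM$ acts coordinatewise as $(\hT_g,S_g)$; so the only substantial point is the push-forward under $\piG$.

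For that, the key preliminary observation is that every measure in $\MW$, and every measure occurring as the second coordinate of a point of $\GMW$, is supported on the closed set $G\times W$: it is closed because $W$ is compact, the generators $\nuW(\hx)$ are supported there by construction, and a vague limit of measures supported on a fixed closed set is again supported there. Given this, I would fix $f\in C_c(G)$ and a cutoff $\psi\in C_c(H)$ with $0\le\psi\le1$ and $\psi\equiv1$ on $W$; then for any $\nu$ supported on $G\times W$ one has $\int f\,d(\piG_*\nu)=\int (f\circ\piG)\,d\nu=\int f(x_G)\,\psi(x_H)\,d\nu(x_G,x_H)$, and $(x_G,x_H)\mapsto f(x_G)\psi(x_H)$ lies in $C_c(G\times H)$. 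Vague continuity of $\nu\mapsto\piG_*\nu$ on such $\nu$ follows immediately, and taking $f\ge0$ shows $\piG_*\nu$ assigns finite mass to every compact subset of $G$, hence is genuinely an element of $\cMG$; consequently $\piG_*$ and $\pihXG_*$ take values in $\MWG$ and $\GMWG$ respectively, since the continuous image of a closure is contained in the closure of the image. I expect this to be the main (if mild) obstacle, and it is exactly here that compactness of $W$ is indispensable: otherwise $f\circ\piG$ is not compactly supported and vague convergence need not pass to the projections.

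Surjectivity of each solid arrow $\phi$ is then formal: the domain of $\phi$ is by construction the vague closure of $\GnuW$ (respectively of $\nuW(\hX)$), $\phi$ maps the dense subset $\GnuW$ (resp.\ $\nuW(\hX)$) onto the dense subset $\hX$, $\nuW(\hX)$, $\GnuWG$ or $\nuWG(\hX)$, and a continuous image of a compact set is compact, hence closed; so the image of $\phi$ contains the closure of that dense set — which is the target — and is contained in it by the remark above. Commutation with the action reduces to the coordinatewise form of $S_g$ together with the single identity $\piG_*\circ S_g=S_g\circ\piG_*$ on $\cM$, itself a consequence of $\piG\circ T_g=T_g\circ\piG$ on $G\times H$.

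Finally, for the dotted arrows I would simply note that $\nuWG=\piG_*\circ\nuW$ is the definition of $\nuWG$, that the equivariance $\nuW\circ\hT_g=S_g\circ\nuW$ (hence $\nuWG\circ\hT_g=S_g\circ\nuWG$) is Lemma~\ref{lemma:basic-0}, and that compatibility with $\pihX_*$ and $\piGH_*$ is read off the section $\hx\mapsto(\hx,\nuW(\hx))\in\GnuW$, on which $\pihX_*$ is the identity and $\piGH_*$ equals $\nuW$; these give all the asserted commutations. That $\nuW$ and $\nuWG$ are in general neither onto nor continuous is an ``in general'' caveat rather than part of the claim — it will be witnessed by the window examples of Section~\ref{sec:examples} (for instance windows whose boundary meets $\piH(\LL)$, where vague limits of sequences $\nuW(\hx_n)$ produce measures not of the form $\nuW(\hx)$) — so nothing needs to be proved here.
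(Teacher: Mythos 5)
Your argument is correct and follows essentially the same route as the paper: surjectivity of each solid arrow via the images of the dense graph sets ($\pihXG_*(\GnuW)=\GnuWG$, $\piGH_*(\GnuW)=\nuW(\hX)$, etc.) together with compactness of the closures, equivariance from the coordinatewise action and $\piG\circ T_g=T_g\circ\piG$, and the commutation of the dotted maps from $\nuWG=\piG_*\circ\nuW$ and Lemma~\ref{lemma:basic-0}. The only place you are more explicit is the continuity of $\piG_*$ on measures supported in $G\times W$, which the paper's proof of this lemma passes over and only establishes later (Lemma~\ref{lemma:projection-piG1}b) with exactly your cutoff-function argument, so your inlining of it is a harmless strengthening rather than a deviation.
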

\begin{proof}
All arrows in the left hand diagram represent onto maps by definition, all solid arrows continuous maps.
As all closures involved in the right hand diagram are compact metric spaces, and as $\pihXG_*(\GnuW)=\GnuWG$, $\piGH_*(\GnuW)=\nuW(\hX)$, $\piG_*(\GnuWG)=\MWG$ and $\piG_*(\nuW(\hX))=\nuWG(\hX)$, also all solid arrows on that side represent onto maps, and as they all 
commute with the respective actions of $S$ and $\hT$, they all represent factor maps of compact dynamical systems.
\end{proof}

We will see in Lemma~\ref{lemma:proj-homeo-1} that $\pihXG_*:\GMW\to\GMWG$ is always a homeomorphism because of 
the injectivity of $\piG|_\LL$. Therefore the diagram simplifies to the one in Figure~\ref{figure:diagram}.
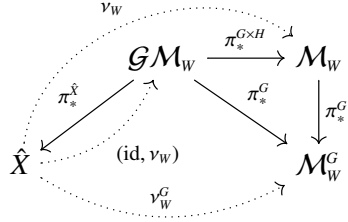
\begin{figure}[h]
\begin{equation*}
\begin{tikzcd} 
\ &\GMW\arrow{rd}{\piG_*}\arrow[swap]{ld}{\pihX_*}\arrow{r}{\piGH_*} & \MW\arrow{d}{\piG_*} \\  
%\hX \arrow[dotted]{rr}{\nuWG} 
\hX \arrow[dotted, bend right=30]{rr}{\nuWG}
\arrow[dotted, bend right=30,swap]{ru}{(\id,\nuW)} 
\arrow[dotted, bend left=60]{rru}{\nuW}&\ & \MWG
\end{tikzcd}
\end{equation*}
\caption{The dynamical systems under study\label{figure:diagram}}
\end{figure}
Observe also that 
\begin{itemize}
\item[-]
$\piGH_*:\GMW\to \MW$ is \oneone on $(\piGH_*)^{-1}(\MW\setminus\{\0\})$, where $\0$ denotes the zero configuration, and
\item[-]
if the window $W$ has non-empty interior, then $\piGH_*$ is a homeomorphism (Proposition~\ref{prop:projection-homeo}).
\end{itemize}
In particular, the map $\hat\pi:=\pihX_*\circ(\piGH_*)^{-1}:\MW\setminus\{\0\}\to\hX$, which associates to each non-zero configuration its torus parameter, is well defined, see Definition~\ref{def:hat-pi}.

The factor maps $\piG_*$ may have a more complicated structure, and for some results we will have to make the additional assumption that 
$\piG_*:\MW\to\MWG$ is \oneone, which implies immediately that $\piG_*$ is a homeomorphism. 
For windows $W\subset\R^d$ with $W=\overline{\inn(W)}$ we will show that this assumption is satisfied, so that the diagram from Figure~\ref{figure:diagram} simplifies further to
\begin{tikzcd} 
\hX &\GMW\arrow[leftrightarrow]{r}{\piG_*}\arrow[swap]{l}{\pihX_*} & \MWG\ .
\end{tikzcd}

For $\BB$-free systems and similar ones we will show that the restriction of $\piG_*$ to a rather large subset $\Y$ of $\MW$ (or $\GMW$) is \oneone. This set was identified before in \cite{Peckner2012}, see also \cite{Kulaga-Przymus2014, BaakeHuck14}.

The advantages of the general picture shown in the above diagrams are among others:
\begin{itemize}
\item[-]
The structure of the map $\pihX_*:\GMW\to\hX$ can be conveniently analysed under quite general assumptions so that the special properties of particular systems enter only the analysis of the map $\piG_*$.
\item[-]
The orbit closures of $\nuW(\hx)$ and $\nuWG(\hx)$ for different points $\hx\in\hX$ can all be studied as subsets of $\MW$ and $\MWG$, respectively.
Indeed, it will turn out that these orbit closures are identical for $m_\hX$-a.a. $\hx$.
\end{itemize}

\begin{remark}
In some publications on model sets such as \cite{BHP97, Moody2002}, the compact factor group $\hX$ is called a torus and denoted by $\T$. The set $\MWG$ contains all orbit closures $\X_{\scriptscriptstyle W'}$ of weak model sets produced from shifted copies
$W'$ of the window $W$.
\end{remark}

\section{Main results}\label{sec:main-results}

We start with some purely topological results about the dynamical system on (the closure of) the graph of the torus parametrisation and their consequences for identifying maximal equicontinuous factors and the lack of weak mixing. In a number of specific situations these consequences are well known, see e.g.~\cite{Robinson2007, BaakeHuck14, ABKL14}.

\subsection{Topological results}\label{sec:top}
Our main topological results will show 
that $(\hX,\hT)$ is the maximal equicontinuous factor of $(\GMW,S)$, and
how close to or far from being homeomorphisms $\piGH_*$ and $\piG_*$ are.
We will prove this starting with the observation that $\GMW$ is the closure of the graph of an upper semicontinuous function  which is invariant under the skew product action $\hT\times S$ on $\hX\times \cM$. To state the relevant proposition we make the following definitions:
\begin{definition}[Zeros and continuity points of $\nuW$]\quad
\begin{enumerate}[a)]
\item $\CW\subseteq \hX$ is the set of all continuity points of the map $\nuW$, i.e., the set of all points $\hx\in \hX$ such that for each sequence $(\hx^n)_n$ in $\hX$ which converges to $\hx$, the sequence of configurations $\nuW(\hx^n)$ 
converges vaguely to $\nuW(\hx)$.
\item $\ZW\subseteq\hat X$ is the set of all points $\hx\in\hX$ for which $\nuW(\hx)$ is the zero configuration that we denote by $\0$.
\end{enumerate}
\end{definition}

\begin{remark}\label{rem:CW}
\begin{enumerate}[a)]
\item  $\hx\in \CW$ if and only if $(\pihX_*)^{-1}\{\hx\}\cap \GMW=\{(\hx,\nuW(\hx))\}$.
\item $\CW$ is also the set of continuity points of $\nuWG=\piG_*\circ\nuW$ and
$Z_W=\{\hx\in\hX: \nuWG(\hx)=\0\}$. The latter identity is obvious; the first one follows from the fact that the graph closures $\GMW$ of $\nuW$ and $\GMWG$ of $\nuWG$ are fibre-wise homeomorphic, see Lemma~\ref{lemma:proj-homeo-1}.
\item We have $\hX=\ZW$ if and only if $W=\emptyset$.
\item The sets $\CW$ and $\ZW$ are $\hT$-invariant.
\end{enumerate}
\end{remark}

\begin{proposition}[Upper semicontinuity of $\nuW$]
\label{prop:projection-usc}\quad
\begin{enumerate}[a)]
\item 
Suppose that $\lim_{n\to\infty}\hx^n=\hx$. Then $\nu\leqslant\nuW(\hx)$ for all vague limit points $\nu$ of the sequence $(\nuW(\hx^n))_n$, and $d\nu/d\nuW(\hx)$ takes only values $0$ and $1$. (This is the upper semicontinuity of $\nuW$ w.r.t. the natural order relation on $\cM$.)
\item
$\hx\in \CW$ if and only if $\{\nu\in \MW: \nu\leqslant\nuW(\hx)\}=\{\nuW(\hx)\}$.
\item $\CW$ is a dense $G_\delta$-subset of $\hX$. 
\item If $\inn(W)=\emptyset$, then $\ZW=\CW$ is a dense $G_\delta$-subset of $\hX$. If in addition $W\ne\emptyset$, then $\0\in\overline{\nuW(\hX\setminus\ZW)}$.
\item If $\inn(W)\neq\emptyset$, then $\ZW=\emptyset$ and, even more, $\0\not\in \MW$.
\end{enumerate}
These assertions remain valid if $W$ is only assumed to be closed (and not compact).
\end{proposition}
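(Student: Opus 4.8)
The plan is to establish the five assertions of Proposition~\ref{prop:projection-usc} in roughly the given order, since later parts rely on earlier ones, and to keep track of the fact that everything must only use closedness of $W$ rather than compactness.

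\textbf{Part a).} Suppose $\hx^n\to\hx$ in $\hX$. Fix representatives $x^n\in X$ (or more conveniently in $G\times H$) with $x^n\to x$; this is possible since the quotient map is open and $\hX$ is metrisable. The key local finiteness fact is the existence of the open zero neighbourhood $V\subseteq G\times H$ from Subsection~\ref{en:ass}(3) with pairwise disjoint translates $\overline V+\ell$, $\ell\in\LL$. Given a relatively compact open set $A\subseteq G\times H$, only finitely many $\ell\in\LL$ satisfy $(x+\ell+V)\cap A\neq\emptyset$, hence for $n$ large the support points of $\nuW(\hx^n)$ in $A$ lie in a bounded union of such $V$-neighbourhoods around the finitely many support points of the full lattice configuration in $A$. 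Each point $y=x+\ell\in(x+\LL)\cap A$ is approached by $y^n:=x^n+\ell$; then $y^n\in(x^n+\LL)$, and $y^n$ is a support point of $\nuW(\hx^n)$ precisely when $\piH(y^n)\in W$. Since $W$ is closed, if $y^n$ is a support point for infinitely many $n$ then $\piH(y)\in W$, i.e.\ $y$ is a support point of $\nuW(\hx)$; conversely a support point $y$ of $\nuW(\hx^n)$ which is \emph{not} in the limit simply fails to be ``filled in''. This shows any vague limit $\nu$ of $(\nuW(\hx^n))_n$ is a sub-configuration of $\nuW(\hx)$ consisting of a subset of its atoms, each with multiplicity one; equivalently $\nu\leqslant\nuW(\hx)$ and $d\nu/d\nuW(\hx)\in\{0,1\}$. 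The main technical obstacle is exactly this bookkeeping: ensuring no ``mass from infinity'' appears and that atoms cannot merge, both of which follow from the disjoint-$V$-neighbourhoods property and injectivity of $\piG|_\LL$ (no two lattice points share a $G$-coordinate; more precisely no two lattice points coincide).

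\textbf{Parts b) and c).} For b): if $\hx\in\CW$ then by definition every sequence $\hx^n\to\hx$ gives $\nuW(\hx^n)\to\nuW(\hx)$, so by a) the only element of $\MW$ below $\nuW(\hx)$ reachable as such a limit is $\nuW(\hx)$ itself; but every $\nu\in\MW$ with $\nu\leqslant\nuW(\hx)$ arises (via Remark~\ref{rem:CW}a) and the fibre description) as a limit of $\nuW(\hx^n)$ for suitable $\hx^n\to\hx$, using that $\GMW$ is the graph closure and $\pihX_*$ is the factor onto $\hX$. Hence $\{\nu\in\MW:\nu\leqslant\nuW(\hx)\}=\{\nuW(\hx)\}$. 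Conversely, if that singleton condition holds, then by a) every convergent subsequence of any $\nuW(\hx^n)$ with $\hx^n\to\hx$ has limit $\leqslant\nuW(\hx)$, hence equals $\nuW(\hx)$; since $\MW$ is compact metric this forces $\nuW(\hx^n)\to\nuW(\hx)$, i.e.\ $\hx\in\CW$. For c): continuity points of a map into a metrisable space form a $G_\delta$; explicitly, writing $\omega(\hx)$ for the oscillation of $\nuW$ at $\hx$ (diameter of the set of subsequential limits in a fixed metric on $\cM$), $\CW=\bigcap_k\{\hx:\omega(\hx)<1/k\}$ is $G_\delta$. Density then follows from the Baire property of $\hX$ (Subsection~\ref{en:ass}(1)) once we show $\CW$ is dense, which in turn follows because an upper semicontinuous (in the order sense, translated into the metric via a) ) map on a Baire space into the lattice $\cM$ with the vague topology has a residual set of continuity points — this is the standard Baire-category argument, using that the ``number of atoms in a fixed relatively compact open set $A$'' is a lower semicontinuous, integer-valued function by a), hence continuous on a dense $G_\delta$; intersecting over a countable basis of such $A$ gives $\CW$.

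\textbf{Parts d) and e).} Assume $\inn(W)=\emptyset$. First, $\ZW\subseteq\CW$: if $\nuW(\hx)=\0$ then the only $\nu\in\MW$ with $\nu\leqslant\0$ is $\0$, so $\hx\in\CW$ by b). For the reverse inclusion $\CW\subseteq\ZW$, suppose $\hx\in\CW$ but $\nuW(\hx)\neq\0$, so some $y=x+\ell$ has $\piH(y)\in W$. Since $\inn(W)=\emptyset$, there is a sequence $h^n\to\piH(y)$ with $h^n\notin W$; using density of $\piH(\LL)$ in $H$ and metric transitivity (Subsection~\ref{en:ass}(4)), we can perturb $\hx$ to $\hx^n\to\hx$ so that the atom at $y$ is destroyed, i.e.\ $\nuW(\hx^n)(A)<\nuW(\hx)(A)$ for a small relatively compact open $A\ni y$. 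Then $\nuW(\hx^n)\not\to\nuW(\hx)$, contradicting $\hx\in\CW$. Hence $\ZW=\CW$, which is a dense $G_\delta$ by c). For the ``in addition'' clause: if moreover $W\neq\emptyset$, pick $w\in W$; by density of $\piH(\LL)$ and a perturbation argument one produces $\hx^n$ with $\nuW(\hx^n)\neq\0$ (so $\hx^n\notin\ZW$) and $\nuW(\hx^n)\to\0$ vaguely — concretely, shift so the unique potential atom near a chosen lattice point has $H$-coordinate just inside $W$ while all other relevant lattice points have $H$-coordinate outside $W$, then push that atom out to spatial infinity by a large $G$-translate. This shows $\0\in\overline{\nuW(\hX\setminus\ZW)}$. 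Finally e): if $\inn(W)\neq\emptyset$, fix a nonempty open $U\subseteq W$; since $\piH(\LL)$ is dense, for every $\hx=x+\LL$ there is $\ell\in\LL$ with $\piH(x+\ell)\in U\subseteq W$, so $\nuW(\hx)$ always has an atom, i.e.\ $\ZW=\emptyset$. To get the stronger $\0\notin\MW$: one shows a uniform lower bound — there is a relatively compact open $A\subseteq G\times H$ and an integer $m\geqslant1$ with $\nuW(\hx)(A)\geqslant m$ for \emph{all} $\hx$ after an appropriate $G$-translate, equivalently every configuration in $\nuW(\hX)$ charges some translate of $A$ by at least $m$; this bound passes to the vague closure $\MW$ (using $\liminf$-lower-semicontinuity of $\nu\mapsto\nu(A)$ on open $A$), excluding $\0$. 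The uniform bound again comes from $\inn(W)\supseteq U$ open together with the lattice having positive density and $\piH(\LL)$ dense, so that a fixed-size box in $G$ always contains at least one lattice point whose $H$-coordinate lies in $U$.

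Throughout, nothing uses compactness of $W$: parts a)--c) only used that $W$ is closed (to pass ``$\piH(y^n)\in W$ for infinitely many $n$'' to ``$\piH(y)\in W$''), and d)--e) used closedness together with $\inn(W)=\emptyset$ or $\inn(W)\neq\emptyset$. Hence the final sentence of the proposition holds. The step I expect to be the genuine obstacle is part~a): getting the local-finiteness/no-mass-from-infinity argument fully rigorous in the locally compact second countable setting, and correctly arguing that atoms of the $\nuW(\hx^n)$ cannot collide to produce multiplicity $\geqslant2$ in the limit — this is where the disjoint-neighbourhood property of $\LL$ and injectivity of $\piG|_\LL$ are essential.
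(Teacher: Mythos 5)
Your parts a)--c) are essentially sound: a) follows the paper's own route (representatives converging in $G\times H$, the disjoint translates of the zero neighbourhood $V$, closedness of $W$), and your route to c) via oscillation sets and integer-valued counting functions is a workable alternative to the paper's argument, which instead characterises $\CW$ explicitly as $\{\hx=x+\LL:\ (x_H+\piH(\LL))\cap\partial W=\emptyset\}$ (Lemma~\ref{lemma:projection-CW}) and then applies Baire to the countably many nowhere dense translates of $\partial W$. Note, however, that the counting functions $\hx\mapsto\nuW(\hx)(K)$ are \emph{upper} semicontinuous and only for \emph{compact} $K$ (a) gives $\limsup_n\nuW(\hx^n)(K)\leqslant\nuW(\hx)(K)$), not lower semicontinuous on open sets as you claim; the Baire-class argument survives this correction, but as written the direction is wrong.

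The genuine gap is the second claim of d), $\0\in\overline{\nuW(\hX\setminus\ZW)}$, and a related flaw in e). Your construction -- isolate a single atom from ``all other relevant lattice points'' and then ``push it to spatial infinity by a large $G$-translate'' -- is not justified: what you need is, for every large compact $K\subseteq G\times H$, a point $\hy\notin\ZW$ with $\nuW(\hy)(K)=0$, i.e.\ a nonzero configuration with an atom isolated at arbitrarily large scale. Producing such a $y_H$ means choosing a point of the meagre set $\bigcup_{\ell}(W-\ell_H)$ that simultaneously avoids a (generally infinite) family of other translates of the nowhere dense set $W$; density of $\piH(\LL)$ does not deliver this, and no argument is given -- this existence statement is essentially the claim itself. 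The paper obtains it without any construction: since $\ZW=\CW$ and $\inn(\ZW)=\inn(\CW)=\emptyset$ (Lemma~\ref{lemma:int(CW)}, using density of $\piH(\LL)$ and $\partial W=W\neq\emptyset$), one picks $\hx\in\ZW$, chooses $\hx_n\to\hx$ with $\hx_n\notin\ZW$, and uses continuity of $\nuW$ at $\hx$ to get $\nuW(\hx_n)\to\nuW(\hx)=\0$; this idea is absent from your proposal. In e), to exclude $\0$ from the vague closure you must transfer a lower bound through the limit, and the correct tool is $\limsup_n\nu_n(K)\leqslant\nu(K)$ for a single \emph{fixed compact} $K$ charged by every $\nuW(\hx)$ -- not ``liminf-lower-semicontinuity on open $A$'', which goes the wrong way, and not a configuration-dependent translate of $A$, which is compatible with mass escaping to infinity. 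The fixed compact set comes from the finite-subcover argument the paper gives: cover the relatively compact $\piH(X)$ by finitely many sets $\inn(W)-\ell_H$, $\ell\in\LL_0$, and take a compact $Q$ containing all $X+\ell$, $\ell\in\LL_0$; your ``fixed-size box in $G$'' sentence gestures at this uniform statement but does not prove it. Incidentally, the step you flagged as the main obstacle (part a)) is fine; the real delicacies are in d) and e).
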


\begin{remark}\label{remark:projection-lower-bound}
Proposition~\ref{prop:projection-usc} shows that
the set $\GMW$ is upper bounded by the graph of the upper semicontinuous function $\nuW:\hX\to\cM$. In a similar way it can be lower bounded by the lower semicontinuous function 
$\muW:=\nu_{H}-\nu_{\scriptscriptstyle\inn(W)^c}$. This function is lower semicontinuous, because
$\nu_H(\hx)=\sum_{\ell\in\LL}\delta_{x+\ell}$ is clearly continuous as a function of $\hx=x+\LL$, and as $\inn(W)^c$ is closed, $\nu_{\scriptscriptstyle \inn(W)^c}$ is upper semicontinuous.
Furthermore, as $\nuW+\nu_{\scriptscriptstyle\inn(W)^c}=\nu_{\scriptscriptstyle W\cup\inn(W)^c}+\nu_{\scriptscriptstyle W\cap\inn(W)^c}=\nu_H+\nu_{\scriptscriptstyle \partial W}$, 
we have $\muW=\nuW-\nu_{\scriptscriptstyle \partial W}\leqslant\nuW$. Hence, as $\GMW$ is the vague closure of $\cG\nuW$, this set is lower bounded by the graph of $\muW$. (For weak model sets with $\inn(W)=\emptyset$, we have $\muW( \hx)=\0$ for all $\hx\in\hX$.) Observe that for general $\nu\in \cM$ the condition $\muW(\hx)\leqslant\nu\leqslant\nuW(\hx)$ for some $\hx\in\hX$ does not imply $\nu\in\MW$. But see the
$\BB$-free systems from \cite{Kulaga-Przymus2014} discussed in Subsection~\ref{subsec:b-free} for a situation where this implication does hold.
\end{remark}

Some pieces of information on how to pass information about the system $(\GMW,S)$ to its various factor systems are collected in the following proposition.

\begin{proposition}\label{prop:projection-homeo}
\begin{enumerate}[a)]
\item $\piG_*:\MW\to\MWG$ is continuous.
\item $\piGH_*:\GMW\to \MW$ is \oneone on $(\piGH_*)^{-1}(\MW\setminus\{\0\})$, and
if $\inn(W)\neq\emptyset$, then $\piGH_*$ is a homeomorphism.
\item $\pihX_*:(\GMW,S)\to(\hX,\hT)$ is a topological almost \oneone extension of its maximal equicontinuous factor.
\end{enumerate}
\end{proposition}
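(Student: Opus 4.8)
The plan is to treat the three assertions separately, using Proposition~\ref{prop:projection-usc} as the main tool.

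For part a), continuity of $\piG_*:\cM\to\cMG$ (restricted to $\MW$) is essentially a general fact about pushing forward locally finite measures along the projection $\piG:G\times H\to G$. The point is that $\piG$ is a proper map on the relevant supports: every configuration $\nu\in\MW$ is supported on $G\times W$ with $W$ compact (this is inherited in the vague closure from the fact that $\supp\nuW(\hx)\subseteq G\times W$ and $W$ is closed), so for any compact $C\subseteq G$ the preimage $(\piG)^{-1}(C)\cap(G\times W)=C\times W$ is compact. Hence testing against $f\in C_c(G)$ amounts to testing the measures in $\cM$ against $f\circ\piG$ restricted near $C\times W$, and vague convergence in $\cM$ passes through. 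I would spell this out by taking $\nu^n\to\nu$ vaguely in $\MW$, choosing a test function $f\in C_c(G)$, and multiplying $f\circ\piG$ by a fixed cutoff $\chi\in C_c(G\times H)$ which is $\equiv 1$ on a neighbourhood of $\supp(f)\times W$; then $\int f\,d(\piG_*\nu^n)=\int (f\circ\piG)\cdot\chi\,d\nu^n\to\int(f\circ\piG)\cdot\chi\,d\nu=\int f\,d(\piG_*\nu)$.

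For part b), the injectivity statement on $(\piGH_*)^{-1}(\MW\setminus\{\0\})$ is already asserted in the discussion preceding the proposition, so I would either cite that or re-derive it: if $(\hx,\nu)$ and $(\hx',\nu)$ both lie in $\GMW$ with $\nu\neq\0$, then by Proposition~\ref{prop:projection-usc}a) $\nu\leqslant\nuW(\hx)$ and $\nu\leqslant\nuW(\hx')$ with densities in $\{0,1\}$, and a nonzero point $y\in\supp\nu$ forces $y\in(x+\LL)\cap(x'+\LL)$, whence $x+\LL=x'+\LL$, i.e.\ $\hx=\hx'$; combined with continuity and compactness this gives a homeomorphism onto its image. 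For the second half, when $\inn(W)\neq\emptyset$ we have $\0\notin\MW$ by Proposition~\ref{prop:projection-usc}e), so the exceptional set is empty and $\piGH_*$ is a continuous bijection between compact metric spaces, hence a homeomorphism.

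For part c), the claim is that $\pihX_*:(\GMW,S)\to(\hX,\hT)$ is an almost one-to-one extension that realises the maximal equicontinuous factor. First, $(\hX,\hT)$ is equicontinuous (it is a compact abelian group rotation) and minimal, and it is a factor of $(\GMW,S)$ via the continuous equivariant surjection $\pihX_*$ (Lemma~\ref{lemma:diagrams-basic}). Almost one-to-one-ness is exactly the content of Remark~\ref{rem:CW}a) together with Proposition~\ref{prop:projection-usc}c): the fibre $(\pihX_*)^{-1}\{\hx\}$ is the singleton $\{(\hx,\nuW(\hx))\}$ precisely on the dense $G_\delta$ set $\CW$. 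Finally, to see this factor is \emph{maximal} equicontinuous, I would invoke the standard fact that an almost one-to-one extension of an equicontinuous minimal system is its maximal equicontinuous factor: any equicontinuous factor $q:(\GMW,S)\to(K,R)$ factors through $\pihX_*$, because on the residual set $\CW$ the map $\pihX_*$ is injective and an equicontinuous (hence distal) factor cannot separate points that are proximal in $\GMW$ — and any two points in a common $\pihX_*$-fibre are proximal, since they can be approached by orbit segments that, projected to $\hX$, coincide. The main obstacle is this last point: one must verify that points in the same $\pihX_*$-fibre are indeed proximal in $(\GMW,S)$, which uses minimality of $(\hX,\hT)$ to push any fibre into the ``collapsed'' fibres over $\CW$ along a suitable sequence of group elements, together with the upper semicontinuity from Proposition~\ref{prop:projection-usc} to control the $\cM$-coordinate in the limit.
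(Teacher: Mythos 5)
Your parts a) and b) are essentially the paper's own arguments: a) is the paper's Lemma~\ref{lemma:projection-piG1}b (the paper cuts off with a function $g_{\scriptscriptstyle W}\circ\piH$, $1_W\leqslant g_{\scriptscriptstyle W}\in C_c(H)$, where you use a two-variable cutoff $\chi$ that is $1$ near $\supp(f)\times W$ -- same idea, both resting on the fact that every $\nu\in\MW$ is supported in $G\times W$), and b) is exactly the paper's combination of Lemma~\ref{lemma:projection-unique-nu-1} (uniqueness of $\hx$ with $\nu\leqslant\nuW(\hx)$ for $\nu\neq\0$, which you re-derive from Proposition~\ref{prop:projection-usc}a) with Proposition~\ref{prop:projection-usc}e and the compact-to-Hausdorff argument. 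The genuine difference is in c). The paper factors $\pihX_*$ through the abstract maximal equicontinuous factor $(Y,R)$ of $(\GMW,S)$ and then applies its Lemma~\ref{lemma:projection-top-dyn1} (a distal, almost \oneone extension of a minimal system is a homeomorphism, proved via Furstenberg's decomposition of distal systems into minimal sets) to conclude $(Y,R)=(\hX,\hT)$. You instead verify the universal property directly: points in a common $\pihX_*$-fibre are proximal, and since equicontinuous systems are distal and factor maps preserve proximality, every equicontinuous factor of $(\GMW,S)$ collapses the $\pihX_*$-fibres and hence factors through $\pihX_*$. Your proximality argument is correct and is indeed the crucial step: given $(\hx,\nu_1),(\hx,\nu_2)\in\GMW$, minimality of $(\hX,\hT)$ gives $g_n$ with $\hT_{g_n}\hx\to\hy\in\CW$; after passing to a subsequence, both $S_{g_n}(\hx,\nu_i)$ converge in the compact, $S$-invariant, closed set $\GMW$ to points in the fibre over $\hy$, which is the singleton $\{(\hy,\nuW(\hy))\}$ by Remark~\ref{rem:CW}a. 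One caveat: the ``standard fact'' you invoke (almost \oneone extension of a minimal equicontinuous system is its maximal equicontinuous factor) is usually stated for minimal extensions, and $(\GMW,S)$ need not be minimal; so the proximality argument cannot be replaced by that citation and must be carried out as you sketch -- fortunately it only uses minimality of the base $(\hX,\hT)$. In exchange, your route avoids the paper's Lemma~\ref{lemma:projection-top-dyn1} and the Furstenberg structure result behind it, making the proof of maximality more self-contained, while the paper's route is shorter once that lemma is available and reuses it elsewhere.
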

Our first main result provides topological information on the various systems under study. Its proof, as well as those of the other results of this subsection, is provided in Sections~\ref{sec:basics} -- \ref{sec:projected}.

\begin{theorem}\label{theo:projection-top}(Topological factors and extensions)
\begin{enumerate}[a)]
\item If $\inn(W)\neq\emptyset$, i.e., if $\ZW=\emptyset$, then
\begin{equation}\label{eq:projection-a-1-1-ext1}
\begin{split}
\pihX_*&:(\GMW,S)\to(\hX,\hT)\quad\text{ and}\\
\pihX_*\circ(\piGH_*)^{-1}&:\hspace{3mm}(\MW,S)\to(\hX,\hT)
\end{split}
\end{equation}
are topological almost \oneone extensions of maximal equicontinuous factors.
In particular, none of the systems is topologically weakly mixing, if $\hX$ is non-trivial.

Furthermore, the restrictions of $S$ to the subsystems $\overline{\cG({\nuW}|_{\CW})}\subseteq \GMW$ and $\overline{\nuW(\CW)}\subseteq\MW$ are almost automorphic extensions\footnote{This means that both systems are minimal topological almost \oneone extensions of $(\hX,\hT)$.}
 of $(\hX,\hT)$, and 							they are the only minimal subsystems of $(\GMW,S)$ and $(\MW,S)$, respectively.
\item If $\inn(W)=\emptyset$, i.e., if $\ZW\neq\emptyset$, 
then $\pihX_*:(\GMW,S)\to(\hX,\hT)$ is a topological almost \oneone extension of its maximal equicontinuous factor,
whereas $(\MW,S)$ and $(\MWG,S)$ have no nontrivial minimal equicontinuous factor. On the other hand, $(\MW,S)$ and $(\MWG,S)$ are never topologically weakly mixing, except if $\card(\MWG)=1$.
\end{enumerate}
\end{theorem}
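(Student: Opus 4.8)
The plan is to treat the two cases $\inn(W)\neq\emptyset$ and $\inn(W)=\emptyset$ separately, deriving everything from Proposition~\ref{prop:projection-usc}, Proposition~\ref{prop:projection-homeo}, and the minimality/unique ergodicity of $(\hX,\hT)$ recorded in Subsection~\ref{en:ass}. First I would recall the standard fact that a topological almost \oneone extension of an equicontinuous (hence distal) system cannot be topologically weakly mixing unless the base is trivial: if $\pi:(Y,S)\to(\hX,\hT)$ is almost \oneone and $\hX$ is nontrivial, then the relation $R_\pi=\{(y,y'):\pi(y)=\pi(y')\}$ is a nontrivial closed invariant equivalence relation on which $\hX$ sits as a factor, so $(Y\times Y,S\times S)$ has the nontrivial equicontinuous factor $\hX$ and cannot be transitive on a residual set. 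So for part a) the weak mixing statements for $(\GMW,S)$ and $(\MW,S)$ follow immediately from Proposition~\ref{prop:projection-homeo}c) together with Proposition~\ref{prop:projection-homeo}b) (which for $\inn(W)\neq\emptyset$ makes $\piGH_*$ a homeomorphism, so $(\MW,S)$ is conjugate to $(\GMW,S)$), and the displayed formula \eqref{eq:projection-a-1-1-ext1} is just a restatement of these two facts.

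Next, for the ``minimal subsystem'' assertion in part a), I would argue that $\overline{\cG(\nuW|_{\CW})}$ is minimal and is the unique minimal set. Minimality: by Proposition~\ref{prop:projection-usc}c), $\CW$ is a dense $G_\delta$, and it is $\hT$-invariant (Remark~\ref{rem:CW}d)); a point $(\hx,\nuW(\hx))$ with $\hx\in\CW$ has the property that $\pihX_*^{-1}\{\hx\}\cap\GMW$ is a singleton (Remark~\ref{rem:CW}a)), so its orbit closure projects onto the minimal set $\hX$ and meets every fibre over $\CW$ in exactly one point; a standard argument then shows the orbit closure equals $\overline{\cG(\nuW|_{\CW})}$ and is minimal — this is precisely the almost automorphy. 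Uniqueness: any minimal subsystem $M\subseteq\GMW$ projects onto $\hX$ (since $\hX$ is minimal), hence meets the fibre over some, and by invariance over a dense set of, points of $\CW$; but over a point of $\CW$ the fibre in $\GMW$ is the single point $(\hx,\nuW(\hx))$, forcing $M\supseteq\overline{\cG(\nuW|_{\CW})}$ and therefore equality. The statement for $\overline{\nuW(\CW)}\subseteq\MW$ then transfers through the homeomorphism $\piGH_*$.

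For part b), when $\inn(W)=\emptyset$: the almost \oneone statement for $\pihX_*:(\GMW,S)\to(\hX,\hT)$ is again Proposition~\ref{prop:projection-homeo}c). The new point is that $(\MW,S)$ and $(\MWG,S)$ have no nontrivial minimal equicontinuous factor. Here I would use Proposition~\ref{prop:projection-usc}d): $\ZW=\CW$ is a dense $G_\delta$, so for $\hx\in\ZW$ we have $\nuW(\hx)=\0$, and moreover $\0\in\overline{\nuW(\hX\setminus\ZW)}$ when $W\neq\emptyset$, so $\{\0\}$ is itself a fixed point in $\MW$ and in $\MWG=\piG_*(\MW)$ that lies in the closure of nonzero configurations — hence $\{\0\}$ is the unique minimal set of $(\MW,S)$ and of $(\MWG,S)$. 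A nontrivial equicontinuous factor would be minimal, and minimality would force the whole system to be minimal (an equicontinuous, hence distal, factor of a system whose only minimal set is a fixed point must be a point); thus no nontrivial minimal equicontinuous factor exists. Finally, the ``never topologically weakly mixing unless $\card(\MWG)=1$'' claim: one cannot use almost \oneoneness of $\pihX_*$ on $\MW$ directly (it need not factor through $\hX$), but I would instead observe that $\MW$ and $\MWG$ still carry the nontrivial eigenvalue structure coming from $\hX$ via the map $\hat\pi$ of Definition~\ref{def:hat-pi} / the factor $\pihX_*\circ(\piGH_*)^{-1}$ defined on $\MW\setminus\{\0\}$, or — cleaner — note that $\MW$ maps onto $\GMWG$ through $\pihXG_*$-type correspondences which map onto $\hX$, so the only obstruction to a nontrivial equicontinuous factor is the collapse of $\hX$, which happens only in degenerate cases; I expect the precise bookkeeping here to be the main obstacle, since $\0$ must be excised and the resulting map $\MW\setminus\{\0\}\to\hX$ is not defined on a compact set, so topological weak mixing has to be ruled out by a direct argument showing that a suitable continuous eigenfunction descends to all of $\MW$ (constant equal to its value at $\0$ on the fibre over the non-generic points) unless $\MWG$ is a single point.

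The step I expect to cause the most friction is exactly this last one in part b): transferring the eigenvalue/equicontinuity information from $\hX$ down to $\MW$ and $\MWG$ across the puncture at $\0$, making sure the exceptional fibre does not destroy continuity of eigenfunctions, and pinning down that the sole escape is $\card(\MWG)=1$. Everything else reduces to the two cited propositions plus the general principle that almost \oneone extensions of nontrivial equicontinuous systems are not weakly mixing and have a unique minimal subset projecting injectively over the residual set of continuity points.
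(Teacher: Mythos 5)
Parts a) and the first two claims of b) of your proposal follow essentially the paper's route: the almost \oneone statements come from Proposition~\ref{prop:projection-homeo}b,c, the unique minimal subsystem of $(\GMW,S)$ is identified exactly as in the paper's Lemma~\ref{lemma:projection-ZC} (every nonempty closed $S$-invariant set meets each singleton fibre over the dense invariant set $\CW$, hence contains $\overline{\cG(\nuW|_{\CW})}$), and ``no nontrivial minimal equicontinuous factor'' in b) needs only the observation that $\0\in\MW$ is a fixed point, so every minimal factor contains a fixed point and is therefore a single point (your detour through ``$\{\0\}$ is the unique minimal set'' and distality is unnecessary, and your claim that a nontrivial equicontinuous factor would automatically be minimal is false, but the conclusion you need survives).

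The genuine gap is the step you yourself flagged: the assertion in b) that $(\MW,S)$ and $(\MWG,S)$ are never topologically weakly mixing unless $\card(\MWG)=1$. Your sketch (a continuous eigenfunction coming from $\hX$ via $\hat\pi$ that ``descends to all of $\MW$'') cannot be completed. Indeed, if $f$ were a continuous eigenfunction on $\MW$ with nontrivial eigenvalue, then $f(\0)=0$ because $\0$ is a fixed point; moreover $\0$ lies in \emph{every} orbit closure in $\MW$ when $\inn(W)=\emptyset$ and $W\neq\emptyset$ (for any $\nu\in\MW\setminus\{\0\}$ one has $S_g\nu\leqslant\nuW(\hT_g\hat\pi(\nu))$, and since $\ZW=\CW$ is dense and $\nuW$ is continuous and equal to $\0$ there, suitable translates of $\nu$ converge to $\0$); since $|f|$ is constant along orbits and continuous, $f\equiv 0$. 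The same obstruction shows that your alternative suggestion of a map from $\MW$ onto $\hX$ would contradict the very statement being proved. The paper avoids eigenfunctions entirely: it observes that $\GMWG\subseteq\hX\times\MWG$ is a topological joining of the minimal distal system $(\hX,\hT)$ with $(\MWG,S)$; if $(\MWG,S)$ were topologically weakly mixing, Furstenberg's disjointness theorem would force $\GMWG=\hX\times\MWG$, which is impossible because the fibre over any $\hx\in\CW$ (and $\CW\neq\emptyset$ by Proposition~\ref{prop:projection-usc}c) is the single point $\nuWG(\hx)$ unless $\card(\MWG)=1$; the statement for $(\MW,S)$ then follows because $(\MWG,S)$ is a factor of it. Without this (or an equivalent) argument your proof of part b) is incomplete.
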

\begin{corollary}\label{cor:projection-top}
\begin{enumerate}[a)]
\item All assertions of the theorem remain valid for the system $(\GMWG,S)$
(see Lemma~\ref{lemma:proj-homeo-1}), and
if $\piG_*:\MW\to\MWG$ is \oneone\, (and hence a homeomorphism), the same is true for $(\MWG,S)$.
\item
If $\inn(W)\neq\emptyset$ and 
if the window $W':=\overline{\inn(W)}$ is \emph{aperiodic}, i.e. if $h+W'=W'$ implies $h=0$, then the restriction of $S$ to the subsystem  $\overline{\nuWGprime(\CWprime)}=\overline{\nuWG(\CW)}\subseteq\MWG$ is an almost automorphic extension of $(\hX,\hT)$, and it is the only minimal subsystem of $(\MWG,S)$.
\end{enumerate}
\end{corollary}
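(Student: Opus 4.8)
The plan is to reduce both parts to results already established for the systems $(\GMW,S)$ and $(\MW,S)$ via the factor maps in Figure~\ref{figure:diagram}, using that all the relevant projections are fibre-wise homeomorphisms when restricted to the non-zero configurations.

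For part a), the first ingredient is Lemma~\ref{lemma:proj-homeo-1}, which asserts that $\pihXG_*:\GMW\to\GMWG$ is a homeomorphism commuting with the actions $S$; hence $(\GMWG,S)$ is conjugate to $(\GMW,S)$ and every assertion of Theorem~\ref{theo:projection-top} transfers verbatim, with the factor map onto $(\hX,\hT)$ being $\pihX_*$ on $\GMWG$, which corresponds to $\pihX_*$ on $\GMW$ under the conjugacy. For the second claim, observe that if $\piG_*:\MW\to\MWG$ is one-to-one then, being a continuous bijection between compact metric spaces (Proposition~\ref{prop:projection-homeo} a) gives continuity, surjectivity is built into the definition of $\MWG$), it is a homeomorphism commuting with $S$; thus $(\MWG,S)$ is conjugate to $(\MW,S)$ and, again, all assertions of Theorem~\ref{theo:projection-top} carry over. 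The maximal equicontinuous factor in the $\inn(W)\neq\emptyset$ case is obtained by composing the conjugacy with $\pihX_*\circ(\piGH_*)^{-1}$.

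For part b), we work under $\inn(W)\neq\emptyset$, so $\ZW=\emptyset$ and $\0\notin\MW$ by Proposition~\ref{prop:projection-usc} e); hence $\piGH_*:\GMW\to\MW$ is a homeomorphism by Proposition~\ref{prop:projection-homeo} b). We already know from Theorem~\ref{theo:projection-top} a) that $\overline{\cG(\nuW|_{\CW})}$ is the unique minimal subsystem of $(\GMW,S)$ and is an almost automorphic extension of $(\hX,\hT)$; pushing this forward first by $\piGH_*$ and then by $\piG_*$ gives $\overline{\nuWG(\CW)}$ as a minimal subsystem of $(\MWG,S)$. The identity $\overline{\nuWGprime(\CWprime)}=\overline{\nuWG(\CW)}$ for $W'=\overline{\inn(W)}$ follows because $\nuWG$ and $\nuWGprime$ agree on $\CW=\CWprime$: on continuity points the configuration is determined by $\inn(W)$ (Remark~\ref{remark:projection-lower-bound} shows $\muW=\nuW$ on $\CW$, and $\muW$ depends only on $\inn(W)$), and $\inn(W)=\inn(W')$. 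The genuinely new input is the \emph{uniqueness} of this minimal subsystem inside $\MWG$: a priori $\piG_*$ need not be injective, so distinct minimal subsystems of $\MW$ could collapse, or — more to the point — one must rule out that $\piG_*$ maps the minimal set of $\MW$ to something non-minimal or creates extra minimal sets. Here aperiodicity of $W'$ enters: it guarantees that the map $\pihX_*\circ(\piGH_*)^{-1}\circ(\text{a section of }\piG_*)$ is well-defined on $\overline{\nuWG(\CW)}$, i.e. that a generic Dirac comb $\nuWG(\hx)$ with $\hx\in\CW$ still ``remembers'' its torus parameter $\hx$, so that $\overline{\nuWG(\CW)}$ is itself an almost one-to-one extension of $(\hX,\hT)$ and in particular minimal; any minimal subsystem of $\MWG$ pulls back under the continuous $\piG_*$ to a closed $S$-invariant set meeting the unique minimal set of $\MW$, forcing it to equal $\overline{\nuWG(\CW)}$.

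The main obstacle I expect is precisely this last point: showing that aperiodicity of $W'$ prevents $\piG_*$ from identifying the distinguished torus parameter of a continuity-point Dirac comb with that of another point of $\hX$. Concretely, one needs that if $\nuWG(\hx)=S_g\nuWG(\hy)$ with $\hx\in\CW$, then $\hT_g\hy=\hx$; equivalently, that the period group of the set $\Lambda(\LL,W')-$information visible in $G$ is trivial. This should follow by lifting: $\piG|_\LL$ is one-to-one, so a coincidence of projected configurations forces a coincidence of the $G\times H$-configurations $\nuWprime(\hx)=S_g\nuWprime(\hy)$ on their common projected support, and then — using that $\hx$ is a continuity point so $\nuWprime(\hx)$ fills out all of $\inn(W')$ in the relevant fibres — one recovers $x_H-y_H\in\LL_H$ translate matching, and aperiodicity of $W'$ upgrades this to $\hx=\hT_g\hy$. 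Making the fibre-wise bookkeeping precise (especially handling the boundary $\partial W'$, which is where $\nuWprime$ and $\nu_{W'}^G$ may differ from $\mu_{W'}$) is the technical heart; everything else is transport of structure along homeomorphisms.
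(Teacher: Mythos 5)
Your part a) and the overall transport-of-structure strategy for part b) are sound, and your uniqueness argument (pull a minimal subsystem of $(\MWG,S)$ back under the continuous $\piG_*$, locate the unique minimal set $\overline{\nuW(\CW)}$ of $(\MW,S)$ inside the preimage, push forward) is essentially the paper's and is valid; note it needs no aperiodicity at all, since the continuous equivariant image of a minimal set is automatically minimal. The genuine gap sits exactly where aperiodicity has to do work: showing that $\overline{\nuWG(\CW)}$ is an almost automorphic extension of $(\hX,\hT)$ itself, i.e.\ that every element of this \emph{closure} still determines its torus parameter. The condition you propose to verify --- if $\nuWG(\hx)=S_g\nuWG(\hy)$ with $\hx\in\CW$ then $\hT_g\hy=\hx$ --- only rules out exact coincidences of Dirac combs of the form $\nuWG(\hy)$. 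But the points of $\overline{\nuWG(\CW)}$ are vague limits and are in general \emph{proper} subconfigurations of some $\nuWG(\hy)$ with $\hy\notin\CW$ (see Example~\ref{ex:Fibonacci}: the minimal set contains projected configurations missing boundary points), so the injectivity needed to define the factor map onto $\hX$ on the closure concerns coincidences arising only in the limit. This is precisely what ``strongly uniquely coding'' (Definition~\ref{def:unique-coding}b) is designed for: the paper proves that a topologically regular aperiodic window is strongly uniquely coding (Lemma~\ref{lemma:regular-window}) and that this is equivalent to $\piG_*:\MWprime\to\MWGprime$ being a homeomorphism (Lemma~\ref{lemma:projection-injective-1}), after which Theorem~\ref{theo:projection-top}a applied to the window $W'$ gives the conclusion via part a) of the corollary. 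Your sketch (which you yourself flag as ``the technical heart'') stays at the ``uniquely coding'' level and neither carries out nor reduces to this limit version, so the key step of part b) is missing.

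A secondary slip: $\CW=\CWprime$ is false in general; one only has $\partial W'\subseteq\partial W$, hence $\CW\subseteq\CWprime$ (take $W=[0,1]\cup\{2\}$ in $H=\R$, where a point hitting $2$ but not $0,1$ lies in $\CWprime\setminus\CW$). From $\nuWG=\nuWGprime$ on $\CW$ you get $\overline{\nuWG(\CW)}=\overline{\nuWGprime(\CW)}\subseteq\overline{\nuWGprime(\CWprime)}$, but the reverse inclusion, and hence the asserted identity $\overline{\nuWGprime(\CWprime)}=\overline{\nuWG(\CW)}$, requires the argument of Remark~\ref{remark:support}a (the closure of the image of the continuity set is contained in the closure of the image of any dense $G_\delta$-set), which your proposal omits.
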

\noindent While the first part of this corollary is obvious, the second part is proved in
Section~\ref{sec:projected}.
How to get rid of the aperiodicity assumption is discussed in Remark~\ref{rem:aperiodicity}.

\subsection{Measure theoretic results}\label{sec:mt}
On the measure theoretic side we have results related to the Mirsky measure
\cite{Abdalaoui2014,CS13,Kulaga-Przymus2014,Sarnak2011} and its spectral properties, to configurations with maximal density \cite{Moody2002,S14}
and to the denseness of individual orbits.

\begin{proposition}\label{prop:projection-mth}(Haar measure of $\ZW$ and $\CW$)

\begin{enumerate}[a)]
\item The sets $\CW$ and $\ZW$ have either full or vanishing Haar measure.
\item  $m_\hX(\ZW)=0$ if and only if  $m_H(W)>0$.
\item $m_\hX(\CW)=1$ if and only if  $m_H(\partial W)=0$. 
\end{enumerate}
\end{proposition}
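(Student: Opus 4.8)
The plan is to analyze the sets $\ZW$ and $\CW$ by pulling them back to the fundamental domain $X \subseteq G \times H$ and exploiting the metric transitivity of the $\LL$-action on $(H, m_H)$ recorded in Subsection~\ref{en:ass}(4), together with the upper semicontinuity structure from Proposition~\ref{prop:projection-usc}.

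\textbf{Part a).} First I would observe that both $\CW$ and $\ZW$ are $\hT$-invariant Borel subsets of $\hX$ (Remark~\ref{rem:CW}d)). Since the $G$-action $\hT$ on $(\hX, m_\hX)$ is uniquely ergodic (Subsection~\ref{en:ass}(5)), hence ergodic, any $\hT$-invariant Borel set has Haar measure $0$ or $1$. This immediately gives the dichotomy. (One should check measurability of $\CW$: it is a $G_\delta$ by Proposition~\ref{prop:projection-usc}c); measurability of $\ZW$ follows since $\hx \mapsto \nuW(\hx)(G \times W)$ can be shown to be measurable, or one uses $\ZW = \{\hx : \nuW(\hx) = \0\}$ and the fact that $\nuW$ is Borel measurable as an upper semicontinuous-type map.)

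\textbf{Part b).} The point $\hx = x + \LL$ lies in $\ZW$ precisely when $(x + \LL) \cap (G \times W) = \emptyset$, i.e. when $x_H \notin \piH(\LL) + W$ (using that $\piG|_\LL$ is injective, the condition on the $G$-coordinate is automatically satisfiable). Thus, identifying $\hX$ with the fundamental domain $X$ and using $m_\hX(\hat A) = \dL \cdot m_{G\times H}(X \cap (\pihX)^{-1}(\hat A))$, the set $\ZW$ corresponds (up to the lattice tiling) to $\{x : x_H \notin \piH(\LL) + W\}$. If $m_H(W) = 0$, then $\piH(\LL) + W = \bigcup_{\ell \in \LL} (\ell_H + W)$ is a countable union of $m_H$-null sets (the lattice is countable by Subsection~\ref{en:ass}(2)), hence $m_H$-null, so its complement is co-null and $m_\hX(\ZW) = 1$; combined with part a), $m_\hX(\ZW) \neq 0$. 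Conversely, if $m_H(W) > 0$, apply metric transitivity (Subsection~\ref{en:ass}(4)) to $A = W$ (or a positive-measure subset): there are countably many $\ell^i \in \LL$ with $m_H\big((\bigcup_i (\ell_H^i + W))^c\big) = 0$, so $\piH(\LL) + W$ is $m_H$-co-null, giving $m_\hX(\ZW) = 0$.

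\textbf{Part c).} By Remark~\ref{remark:projection-lower-bound}, on $\GMW$ we have the sandwich $\muW \leqslant \nuW$ with $\nuW - \muW = \nu_{\scriptscriptstyle \partial W}$, where $\nu_{\scriptscriptstyle \partial W}(\hx) = \sum_{y \in (x+\LL) \cap (G \times \partial W)} \delta_y$. I claim $\hx \in \CW$ holds for $m_\hX$-a.e. $\hx$ iff $\nu_{\scriptscriptstyle \partial W}(\hx) = \0$ for $m_\hX$-a.e.\ $\hx$: indeed, by Proposition~\ref{prop:projection-usc}b), $\hx \in \CW$ iff no $\nu \in \MW$ satisfies $\nu \lneqq \nuW(\hx)$; using lower semicontinuity of $\muW$ and upper semicontinuity of $\nuW$, the failure of continuity at $\hx$ forces $\muW(\hx) \lneqq \nuW(\hx)$, i.e. $\nu_{\scriptscriptstyle \partial W}(\hx) \neq \0$, on a large set. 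More directly: $\nu_{\scriptscriptstyle \partial W}(\hx) \neq \0$ iff $x_H \in \piH(\LL) + \partial W$, and running the argument of part b) with $\partial W$ in place of $W$ shows this set has full measure when $m_H(\partial W) > 0$ (metric transitivity) and null measure when $m_H(\partial W) = 0$ (countable union of null sets). It remains to convert "$\nu_{\scriptscriptstyle \partial W}(\hx) = \0$ for a.e. $\hx$" into "$\hx \in \CW$ for a.e. $\hx$", and this requires showing that whenever $\muW(\hx) = \nuW(\hx)$ — i.e. the upper and lower semicontinuous bounds agree at $\hx$ — then $\nuW$ is genuinely continuous at $\hx$, so that $\{\hx : \muW(\hx) = \nuW(\hx)\} \subseteq \CW$; the reverse inclusion up to null sets then follows, and combined with part a) we get $m_\hX(\CW) = 1$ iff $m_H(\partial W) = 0$.

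\textbf{Main obstacle.} The delicate point is part c): establishing the equivalence between $\hx \in \CW$ and the condition $x_H \notin \piH(\LL) + \partial W$ (equivalently $\nu_{\scriptscriptstyle \partial W}(\hx) = \0$) up to a null set — in particular the implication that agreement of the semicontinuous envelopes at $\hx$ forces continuity there. This needs the precise structure of vague convergence: one must show that if $x^n \to x$ in $G \times H$ with no lattice point of $x + \LL$ on $G \times \partial W$, then $(x^n + \LL) \cap (G \times W) \to (x + \LL) \cap (G \times W)$ in the vague topology, which uses the discreteness of $\LL$ (Subsection~\ref{en:ass}(3)), compactness of $W$, and the fact that lattice points interior to $G \times W$ stay inside and those outside $G \times \overline{W}$ stay outside for $n$ large — the only possible loss or gain of mass happens at the boundary $G \times \partial W$. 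This is the one genuinely technical step; the measure-theoretic dichotomies in parts a) and b) are comparatively routine given the tools already assembled in the excerpt.
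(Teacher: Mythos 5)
Your overall route is the paper's: you characterise $\ZW$ and $\CW$ by the conditions $x_H\notin\piH(\LL)+W$ and $x_H\notin\piH(\LL)+\partial W$, transfer the measure computation through the fundamental domain via $m_\hX(\hat A)=\dL\cdot m_{G\times H}(X\cap(\pihX)^{-1}(\hat A))$, and play metric transitivity of the $\LL$-action on $(H,m_H)$ against countable unions of null sets. Part b) is exactly the paper's argument, and your part a) via ergodicity of the uniquely ergodic system $(\hX,\hT)$ is a legitimate shortcut; the paper instead reads the dichotomy off from b) and c). (The aside in b) about injectivity of $\piG|_\LL$ is irrelevant: the window constrains only the $H$-coordinate.)

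The gap is in part c). The statement is an equivalence, so you need the characterisation $\CW=\{\hx=x+\LL:\ (x+\LL)\cap(G\times\partial W)=\emptyset\}$ (the paper's Lemma~\ref{lemma:projection-CW}) in \emph{both} directions. Your semicontinuity argument and your ``main obstacle'' sketch only address the inclusion $\{\hx:\ \nu_{\scriptscriptstyle\partial W}(\hx)=\0\}\subseteq\CW$ (no lattice point on the boundary forces continuity), which yields ``$m_H(\partial W)=0\Rightarrow m_\hX(\CW)=1$''. For the converse ``$m_\hX(\CW)=1\Rightarrow m_H(\partial W)=0$'' you need $\CW\subseteq\{\hx:\ \nu_{\scriptscriptstyle\partial W}(\hx)=\0\}$, and your sentence ``the reverse inclusion up to null sets then follows'' is not backed by anything you established: knowing that $\{\nu_{\scriptscriptstyle\partial W}=\0\}$ is null when $m_H(\partial W)>0$ says nothing about $m_\hX(\CW)$ unless discontinuity is actually exhibited at points having a lattice point on $G\times\partial W$. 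This does not follow formally from the sandwich $\muW\leqslant\nuW$: a fibre of $\GMW$ could a priori be a singleton even when $\muW(\hx)\lneqq\nuW(\hx)$; one must construct a second element of the fibre. The paper does this in Lemma~\ref{lemma:projection-CW}: if $x_H+\ell_H\in\partial W$ for some $\ell\in\LL$, choose $x_H^n\to x_H$ with $x_H^n+\ell_H\in W^c$ (possible by definition of the boundary), and observe that $\nuW((x_G,x_H^n)+\LL)$ gives mass $0$ to a small neighbourhood of $x+\ell$ while $\nuW(\hx)$ gives it mass $1$ (compactness of $W$ ensures $\partial W\subseteq W$, so the mass is there to lose); hence vague convergence fails and $\hx\notin\CW$. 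Adding this short construction closes your plan; the rest (measurability, the transfer to $\hX$, metric transitivity versus null unions) is used exactly as in the paper.
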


\begin{definition}\label{def:Q_M}(Mirsky measures)
Denote by 
\begin{compactenum}[-]
\item $\QM:=m_\hX\circ\nuW^{-1}$ the lift of Haar measure on $\hX$ to  $\MW$
\item $\QGM:=m_\hX\circ(\id_\hX,\nuW)^{-1}$ the lift of Haar measure on $\hX$ to $\GMW$
\item $\QMG:=m_\hX\circ(\nuWG)^{-1}$ the lift of Haar measure on $\hX$ to  $\MWG$
\item $\QGMG:=m_\hX\circ(\id_\hX,\nuWG)^{-1}$ the lift of Haar measure on $\hX$ to $\GMWG$
\end{compactenum}

\end{definition}
\begin{remark}
In some cases, as e.g.~the square-free integers and the $\BB$-free systems \cite{Kulaga-Przymus2014}, the measure $\QMG$ is called \emph{Mirsky measure}. Note also that in the trivial case where $m_H(W)=0$, the measures $\QM$ and $\QMG$ are the point masses in the zero-configuration $\0$.
\end{remark}

\begin{theorem}\label{theo:projection-mth}(Measurable factors and extensions)\quad Consider the case $m_H(W)>0$.
\begin{enumerate}[a)]
\item The systems $(\hX,m_\hX,\hT)$, $(\GMW,\QGM,S)$ and $(\MW,\QM,S)$ are measure theoretically isomorphic, and the system $(\MWG,\QMG,S)$ is a factor of $(\hX,m_\hX,\hT)$ via the measurable factor map $\nuWG$. 
In particular, $(\MWG,\QMG,S)$ has pure point dynamical spectrum\footnote{In ergodic theory, the term \emph{discrete spectrum} may be more common, see e.g.~\cite{Walters1982}.}, and its group of eigenvalues is a subgroup of the group of eigenvalues of $(\hX,m_\hX,\hT)$.

\item If $\nuWG:\hX\to\MWG$ is not constant $m_\hX$-a.e., then $(\hX\times\MWG,\QGM,\hT\times S)$ is a nontrivial joining of the systems $(\hX,m_\hX,\hT)$ and  $(\MWG,\QMG,S)$.

\item If $m_H(\partial W)=0$, then  $\ZW=\emptyset$ and $\QGM$, $\QM$ and $\QMG$ are the only 
invariant measures for $(\GMW,\hT\times S)$, $(\MW,S)$ and $(\MWG,S)$, respectively.
In particular, these systems are uniquely ergodic.

\end{enumerate}
\end{theorem}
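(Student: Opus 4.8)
The plan is to establish the three parts in order, leaning heavily on the structure already laid out: the map $\nuW:\hX\to\cM$ is upper semicontinuous with $\CW$ a dense $G_\delta$, Haar measure lives on $\CW$ whenever $m_H(\partial W)=0$ (Proposition~\ref{prop:projection-mth}c)), and on $\CW$ the map $\nuW$ is genuinely continuous, hence Borel; likewise $(\id_\hX,\nuW)$ and $\nuWG=\piG_*\circ\nuW$ are Borel. For part a), I would first observe that $(\id_\hX,\nuW):\hX\to\GnuW\subseteq\GMW$ is a Borel bijection onto its image (injective because it is a graph), intertwining $\hT$ with $S$; since $m_\hX$ lives on the $\hT$-invariant set $\CW\cup(\hX\setminus\CW)$ and $\nuW$ restricted to $\CW$ is continuous, $(\id_\hX,\nuW)$ is a Borel isomorphism of $(\hX,m_\hX,\hT)$ onto $(\GMW,\QGM,S)$ by definition of $\QGM$ as the push-forward. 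Next I would show $\piGH_*:\GMW\to\MW$ is a measure-theoretic isomorphism between $(\GMW,\QGM,S)$ and $(\MW,\QM,S)$: it is continuous, equivariant, onto, and by Proposition~\ref{prop:projection-homeo}b) it is \oneone on $(\piGH_*)^{-1}(\MW\setminus\{\0\})$. Since $m_H(W)>0$, Proposition~\ref{prop:projection-mth}b) gives $m_\hX(\ZW)=0$, so $\QGM$ gives zero mass to the fibre over $\0$, and $\piGH_*$ is injective $\QGM$-a.e.; push-forward then identifies $(\MW,\QM,S)$ with $(\GMW,\QGM,S)$. Composing, all three systems are measurably isomorphic to $(\hX,m_\hX,\hT)$. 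Finally, $\QMG=(\piG_*)_*\QM$ by the commuting diagram in Figure~\ref{figure:diagram}, so $(\MWG,\QMG,S)$ is a factor of $(\hX,m_\hX,\hT)$ via $\nuWG$; since $(\hX,\hT)$ is a compact abelian group rotation, it has pure point spectrum, and any measurable factor of a pure point system again has pure point spectrum with eigenvalue group a subgroup — this is the standard Halmos--von Neumann argument.

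For part b), the system $(\hX\times\MWG,\QGMG,\hT\times S)$ is by construction the distribution of $(\hx,\nuWG(\hx))$ under $m_\hX$; its two coordinate marginals are exactly $m_\hX$ and $\QMG$, and it is $\hT\times S$-invariant because $\nuWG$ is equivariant, so it is a joining. It is the trivial (product) joining precisely when the two coordinates are independent, which for a factor map forces $\nuWG$ to be $m_\hX$-a.e. constant. Hence if $\nuWG$ is not a.e. constant the joining is nontrivial. (One should note $\QGMG$ and $\QGM$ differ only by the fibrewise homeomorphism $\pihXG_*$ of Lemma~\ref{lemma:proj-homeo-1}, so the statement transfers; I would phrase it with $\QGMG$ or remark on the identification.)

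For part c), assume $m_H(\partial W)=0$. Then $\ZW=\emptyset$: indeed Proposition~\ref{prop:projection-usc}d) shows $\ZW=\CW$ when $\inn(W)=\emptyset$, but if $\inn(W)=\emptyset$ and $m_H(\partial W)=0$ then $m_H(W)=0$, contradicting our hypothesis $m_H(W)>0$; so $\inn(W)\neq\emptyset$ and Proposition~\ref{prop:projection-usc}e) gives $\ZW=\emptyset$ and even $\0\notin\MW$. For unique ergodicity I would argue on $(\GMW,\hT\times S)$ first, then transfer via the factor maps $\piGH_*$, $\pihXG_*$, $\piG_*$, all of which are equivariant and onto, so uniqueness upstairs forces uniqueness downstairs. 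Let $Q$ be any $\hT\times S$-invariant probability on $\GMW$. Its $\hX$-marginal $(\pihX_*)_*Q$ is $\hT$-invariant, and $(\hX,\hT)$ is uniquely ergodic (consequence (5) of the assumptions), so this marginal is $m_\hX$. By Proposition~\ref{prop:projection-usc}c) together with Proposition~\ref{prop:projection-mth}c), $m_\hX(\CW)=1$, and by Remark~\ref{rem:CW}a) the fibre of $\GMW$ over any $\hx\in\CW$ is the single point $(\hx,\nuW(\hx))$; therefore $Q$ is supported on the graph of $\nuW$ over $\CW$ and is thus the push-forward of its $\hX$-marginal under $(\id,\nuW)$, i.e. $Q=\QGM$. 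Pushing forward under the factor maps identifies $\QM$, $\QMG$ (and $\QGMG$) as the unique invariant measures for the respective systems, giving unique ergodicity.

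The main obstacle I anticipate is the careful bookkeeping in part a) around the zero configuration: one must check that $\QGM$ and $\QM$ assign no mass to the "bad" fibre over $\0$ — this is exactly where $m_H(W)>0$ enters through $m_\hX(\ZW)=0$ — and only then is $\piGH_*$ a.e. injective so that the push-forward is a genuine isomorphism rather than merely a factor map. The spectral and joining statements are then soft consequences of the group-rotation structure of $(\hX,\hT)$, and unique ergodicity in c) reduces cleanly to the almost \oneone (in fact a.e.\ \oneone with respect to any invariant measure) nature of $\pihX_*$ over the full-measure continuity set $\CW$.
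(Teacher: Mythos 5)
Your proposal is correct and follows essentially the same route as the paper: part a) via the isomorphism $(\id,\nuW)$ with inverse $\pihX_*$ and the a.e.\ injectivity of $\piGH_*$ coming from $m_\hX(\ZW)=0$ (Proposition~\ref{prop:projection-mth}b with Proposition~\ref{prop:projection-homeo}b), part b) by noting a product joining supported on the graph of $\nuWG$ forces $\nuWG$ to be a.e.\ constant, and part c) by pulling any invariant measure on $\GMW$ back over the full-measure set $\CW$ of singleton fibres and pushing unique ergodicity down through the continuous factor maps. The only cosmetic remarks are that measurability of $\nuW$ deserves a cleaner justification than your invariant-set phrasing (upper semicontinuity, or the explicit preimage computation as in the proof of Remark~\ref{remark:patterns}, suffices), and your observation about writing $\QGMG$ rather than $\QGM$ in b) matches a notational shortcut the paper itself takes.
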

\noindent These results are proved in Section~\ref{sec:projected}.
\begin{remark}\label{rem:ppdiff-dyn}
Baake and Lenz \cite[Theorem 7]{BaakeLenz2004} proved in a very general context, which encompasses the setting chosen in this paper, that (with our specialised notation) the diffraction measure of $\QMG$ is purely atomic if and only if the dynamical system $(\MWG,\QMG,S)$ has pure point spectrum. But the latter property is guaranteed 
 by Theorem~\ref{theo:projection-mth}a for
all weak model sets with compact window of positive Haar measure.
\end{remark}

%\begin{remark}
%Using the terminology of \cite{DG15}, we can combine parts of Theorem~\ref{theo:projection-top} and Theorem~\ref{theo:projection-mth} as follows. If $\inn(W)\ne\emptyset$, then the systems  $(\MWG, \QMG, S)$ and $(\MW, \QM, S)$ are isomorphic extensions of $(\hX,\hT)$ with factor maps $\pihX_*$ and $\pihX_*\circ (\piGH_*)^{-1}$, respectively. 
%%In particular these systems are mean equicontinuous.
%\end{remark}

\begin{remark}(Entropy)
An interesting question concerns the topological entropy of the above systems in the case $m_H(\partial W)>0$. It is bounded by $\dL\cdot m_H(\partial W)\cdot\log2$, as may be proved using relative entropy theory for skew product systems. Recently, a combinatorial proof for the corresponding pattern entropy bound has been given in \cite{HuckRichard15}.  More generally, if $m_H(\partial W)>0$, the above systems may have many invariant probability measures, all extending the Haar measure on $\hX$. They all have entropy at most $\dL\cdot m_H(\partial W)\cdot\log2$.

For the example of $\BB$-free systems, which is discussed below, the upper bound is attained. This follows from the hereditary property of that system, i.e., for any given configuration $\nu\in \MW$ any of its subconfigurations $\nu'\le\nu$ is also in $\MW$. For this particular example, its simplex of invariant probability measures is quite well understood \cite{Peckner2012, Kulaga-Przymus2014}.
\end{remark}

A certain kind of sequences $(A_n)_{n\in\N}$ of compact subsets of $G$ that typically exhaust $G$ and  have ``nice'' boundaries  are called \emph{tempered van Hove sequences}. They are
discussed by Moody in \cite{Moody2002}.\footnote{(Generalised) van Hove sequences were introduced by Schlottmann \cite{Schlottmann00}, where also their existence is discussed. For compact $A,K\subseteq G$, let $\partial^KA=((K+A)\setminus\inn(A))\cup ((-K+\overline{G\setminus A})\cap A)$ denote the $K$-boundary of $A$. A sequence $(A_n)_n$ of compact subsets of $G$ of positive Haar measure is van Hove if $\lim_{n\to\infty} m_G(\partial^KA_n)/m_G(A_n)=0$ for any compact $K\subseteq G$. The sequence $(A_n)_n$ is tempered \cite{Lindenstrauss2001} if there is a constant $C>0$ such that $m_G(\bigcup_{k<n}(A_k-A_k))\le C m_G(A_n)$ for every $n$. For further background see \cite{BaakeLenz2004, LenzRichard07, mr13} and references therein.} Here it suffices for the moment to note that $A_n=[-n,n]^d$ defines tempered van Hove sequences in $\Z^d$ and in $\R^d$, and that tempered van Hove sequences exist in every $\sigma$-compact locally compact abelian group.

\begin{theorem}(Moody \cite[Theorem 1]{Moody2002}\label{theo:Moody})\quad\\
Let $(A_n)_{n\in\N}$ be a tempered van Hove sequence of subsets of $G$. Then
\begin{equation*}
\lim_{n\to\infty}\frac{\nuW(\hx)(A_n\times H)}{m_G(A_n)}
=
\dL\cdot m_H(W)\quad\text{for $m_\hX$-a.e. $\hx\in\hX$.}
\end{equation*}
As $W$ is compact, the convergence is indeed semi-uniform: For each $\epsilon>0$ there is $n_0\in\N$ such that for all $n\geqslant n_0$ and all $\hx\in\hX$
\begin{equation}\label{eq:nu-density}
\frac{\nuW(\hx)(A_n\times H)}{m_G(A_n)}
\leqslant
\dL\cdot m_H(W) +\epsilon\ .
\end{equation}
\end{theorem}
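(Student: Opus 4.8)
The plan is to derive the pointwise statement from Birkhoff's ergodic theorem applied to $(\hX, m_\hX, \hT)$, and then upgrade it to the semi-uniform upper bound by a compactness argument exploiting that $W$ is compact. First I would observe that for fixed compact $A \subseteq G$ the function $\hx \mapsto \nuW(\hx)(A \times H)$ counts the lattice points $\ell \in x + \LL$ with $\ell \in A \times W$, so it is a finite sum of indicator-type terms; more precisely, $\nuW(\hx)(A\times H) = \sum_{\ell \in \LL} \1_{A\times W}(x+\ell)$ for a representative $x$ of $\hx$. Summing over a van Hove sequence, $\nuW(\hx)(A_n\times H)$ is essentially an ergodic sum along the $G$-action $\hT$ of the function $f := \1_{\text{``}0\in(\,\cdot\,)\cap(A_0\times W)\text{''}}$, or rather the integral against $m_G$ over $A_n$ of the function $g(\hx) := \sum_{\ell\in\LL}\1_{\{0\}\times W}((x_G, \cdot)+\ell)$ type object. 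Cleaner: set $g:\hX\to\R$, $g(\hx) = \nu_W(\hx)(\{0_G\}\times H)$ in the $\Z^d$ case, or in the general case work with $h_V(\hx) = m_{G\times H}(V)^{-1}\nuW(\hx)(V)$ for a small product zero-neighbourhood $V = V_G\times H_0$ and relate $\int_{A_n} $-type averages to Birkhoff averages; in any case $\int_\hX g\,dm_\hX = \dL\cdot m_H(W)$ by the formula $m_\hX(\hat A) = \dL\cdot m_{G\times H}(X\cap(\pihX)^{-1}(\hat A))$ in Subsection~\ref{en:ass}(2), which is exactly the statement that $\nuW(\hx)(A_n\times H)/m_G(A_n)$ has mean $\dL\cdot m_H(W)$.

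Next I would invoke the Lindenstrauss pointwise ergodic theorem for amenable groups along tempered van Hove sequences (the hypothesis on $(A_n)$), together with the unique ergodicity of $(\hX,\hT)$ noted in Subsection~\ref{en:ass}(5), to conclude that $m_G(A_n)^{-1}\nuW(\hx)(A_n\times H) \to \dL\cdot m_H(W)$ for $m_\hX$-a.e. $\hx$. (Unique ergodicity alone does not give convergence here because $g$ is only measurable, not continuous, so one genuinely needs the pointwise ergodic theorem; this is the $L^1$ ergodic theorem combined with a maximal inequality valid for tempered sequences.) This gives the first displayed equation.

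For the semi-uniform upper bound \eqref{eq:nu-density}, the key point is that $W$ is \emph{compact}, so for any $\epsilon'>0$ one can find an open set $U \supseteq W$ with $m_H(U) \le m_H(W) + \epsilon'$, and one can even take $U$ to be a finite union of basic open sets. The function $\hx\mapsto \nuW(\hx)(A_n\times H)$ is bounded above by $\hx\mapsto \nu_U(\hx)(A_n\times H)$ where $\nu_U$ uses the \emph{open} window $U$; replacing $W$ by a slightly larger \emph{compact} window $W'$ with $\inn(W')\supseteq W$ and $m_H(W')\le m_H(W)+\epsilon'$, the map $\hx\mapsto \nu_{W'}(\hx)(A_n\times H)$ is lower semicontinuous in $\hx$ (by the argument in Remark~\ref{remark:projection-lower-bound}, since $\nu_H$ is continuous and $\inn(W')^c$ is closed), hence $\hx\mapsto\nuW(\hx)(A_n\times H) \le \nu_{W'}(\hx)(A_n\times H)$ and the latter, divided by $m_G(A_n)$, converges uniformly. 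More directly: the upper bound follows from upper semicontinuity of $\nuW$ (Proposition~\ref{prop:projection-usc}a) together with the a.e. convergence. Indeed, fix $\epsilon>0$; by the a.e. result and Egorov, the convergence is uniform off a set of small Haar measure; combined with minimality of $(\hX,\hT)$ (Subsection~\ref{en:ass}(5)) and the translation-covariance $\nuW(\hx)(T_{-g}A) = \nuW(\hT_g\hx)(A)$ from Lemma~\ref{lemma:basic-0}, one covers $\hX$ by finitely many translates of the good set; a standard van Hove estimate on the overlap errors (using $m_G(\partial^K A_n)/m_G(A_n)\to 0$) then yields a uniform upper bound of the form $\dL\cdot m_H(W)+\epsilon$ valid for all $\hx$ once $n\ge n_0$.

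The main obstacle is the passage from the a.e. statement to the semi-uniform estimate: one must control the upper average uniformly in $\hx$, and the cleanest route is through a slightly enlarged open/compact window combined with the lower semicontinuity observation of Remark~\ref{remark:projection-lower-bound}, rather than through Egorov plus minimality, because the latter requires care with the van Hove boundary terms. The compactness of $W$ enters precisely at this point (and only here): it is what allows sandwiching $W$ between a compact set and the interior of a slightly larger compact set with almost the same Haar measure, which fails for general relatively compact measurable windows.
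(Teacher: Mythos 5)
Your handling of the almost-everywhere statement is correct and coincides with what the paper (following Moody) actually does: the paper's proof of Theorem~\ref{theo:Moody} is essentially a citation of \cite[Theorem 1]{Moody2002}, with the remark that Moody's generalised Birkhoff theorem can be replaced by the pointwise theorem of \cite{Lindenstrauss2001} for tempered van Hove sequences. Concretely, one replaces the count $\nuW(\hx)(A_n\times H)$ by the orbital average $\frac{1}{m_G(A_n)}\int_{A_n}(S_g\nuW(\hx))(B\times H)\,dm_G(g)$ for a small compact zero neighbourhood $B\subseteq G$, with an error controlled by $m_G(\partial^{-B+B}A_n)/m_G(A_n)$, and applies Lindenstrauss to the bounded measurable (indeed upper semicontinuous) function $\hx\mapsto\nuW(\hx)(B\times H)$, whose mean is $\dL\,m_G(B)\,m_H(W)$ by Weil's formula; this is exactly the computation the paper writes out later in (\ref{eq:vanHove-consequence})--(\ref{eq:Moody-generalization}) in the proof of Theorem~\ref{theo:Moody-extension}. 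Your remark that unique ergodicity alone is insufficient because the observable is only measurable is also correct.

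The semi-uniform bound (\ref{eq:nu-density}) is where your proposal has genuine gaps, in both of the routes you sketch. In route (a), lower semicontinuity of the enlarged-window configuration points in the wrong direction: lsc observables yield semi-uniform \emph{lower} bounds, and the claimed uniform convergence of the $W'$-averages simply does not follow; it would require arranging $m_H(\partial W')=0$ (which you do not do) and then invoking a uniform convergence theorem for null-boundary windows, which is precisely the kind of statement under proof. In route (b), Egorov plus minimality fails: the $G$-orbit of an arbitrary $\hx$ is dense but need not meet a positive-measure Egorov set, and finitely many translates $\hT_g(E)$ of a set of large Haar measure need not cover $\hX$ (the image of $G$ in $\hX$ is typically a dense null subgroup), so you cannot transport the a.e. bound to every $\hx$ this way. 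The argument the paper intends (the references \cite{Sturman2000,mr13}, effectively re-run via the Portmanteau step in the proof of Theorem~\ref{theo:Moody-topogical}b) uses only two facts you already have: $\hx\mapsto\nuW(\hx)(B\times H)=\nuW(\hx)(B\times W)$ is upper semicontinuous, because $B\times W$ is compact and $\nuW$ is usc (Proposition~\ref{prop:projection-usc}a) --- this, not the sandwiching, is where compactness of $W$ enters --- and $(\hX,\hT)$ is uniquely ergodic. If (\ref{eq:nu-density}) failed, there would be $\hx_{n_i}$ and $n_i\to\infty$ whose orbital averages of this observable exceed $\dL\,m_G(B)\,m_H(W)+\epsilon\,m_G(B)$; any weak-* limit of the empirical measures $\frac{1}{m_G(A_{n_i})}\int_{A_{n_i}}\delta_{\hT_g\hx_{n_i}}\,dm_G(g)$ is $\hT$-invariant, hence equals $m_\hX$, and upper semicontinuity together with the Portmanteau theorem forces the limsup of these averages to be at most $\dL\,m_G(B)\,m_H(W)$, a contradiction; since the van Hove boundary estimate is uniform in $\hx$, no Egorov argument or window enlargement is needed. (Your enlargement idea can be salvaged by choosing $W'$ compact with $W\subseteq\inn(W')$, $m_H(W')\leqslant m_H(W)+\epsilon'$ and $m_H(\partial W')=0$, e.g.\ a superlevel set of a Urysohn function at a generic level, and quoting Moody's uniform convergence for null-boundary windows; but as written your justification is not valid.)
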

\begin{proof}
The a.e. convergence is stated explicitly (and proved) in \cite[Theorem 1]{Moody2002}, with a particular normalisation of the Haar measure $m_{G\times H}$ such that $\dL=1$. The argument is based on a version of the generalised Birkhoff ergodic theorem, which may be replaced by \cite{Lindenstrauss2001}.
At the end of that proof additional arguments are provided to prove uniform convergence when $m_H(\partial W)=0$. The same arguments yield the semi-uniform convergence for general compact windows $W$, see also \cite{Sturman2000,mr13}.
\end{proof}
\addtocounter{corollary}{1}
\begin{corollary}
The same statements hold for the ratios $\frac{\nuWG(\hx)(A_n)}{m_G(A_n)}$, because
$\nuWG(\hx)(A_n)=\nuW(\hx)(A_n\times H)$.
\end{corollary}

We extend Theorem~\ref{theo:Moody} to cover also measures $\nu$ dominated by $\nuW(\hx)$. 

\begin{theorem}\label{theo:Moody-extension}(Ergodic point densities)\\
Let $(A_n)_{n\in\N}$ be a tempered van Hove sequence of subsets of $G$ as before.
Let $P$ be an ergodic $S$-invariant probability measure on $\MW$. Then, for $P$-a.e. $\nu\in \MW$, the inequality
\begin{equation*}
D_P:=\lim_{n\to\infty}\frac{\nu(A_n\times H)}{m_G(A_n)}
\quad\text{exists and is }\leqslant \dL\cdot m_H(W)
\end{equation*}
is satisfied,
with equality $P$-a.e. if and only if $P=\QM$.  So in particular, $D_{\QM}=\dL\cdot m_H(W)$.

For each sufficiently small compact zero neighbourhood $B\subseteq G$ holds
\begin{equation}\label{eq:D_P-formula}
D_P=\frac{1}{m_G(B)}\int_{\MW}\nu(B\times H)\,dP(\nu)\ ,
\end{equation}
so that in particular the map $P\mapsto D_P$ is upper semicontinuous w.r.t. to the weak topology on probability measures on $\MW$.
\end{theorem}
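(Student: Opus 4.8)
The plan is to first establish the formula \eqref{eq:D_P-formula} and derive the existence of the limit $D_P$ and its upper semicontinuity from it, and only then to identify the maximiser. For the existence part, fix an ergodic $S$-invariant probability measure $P$ on $\MW$ and a small compact zero neighbourhood $B\subseteq G$ — small enough that $B\times H$ meets $\supp(\nuW(\hx))$ in at most one point of the shifted lattice, which is possible by the injectivity of $\piG|_\LL$ together with consequence \ref{en:ass}(3) (a zero neighbourhood $V$ in $G\times H$ with disjoint $\LL$-translates); concretely choose $B$ with $B\times H'\subseteq V$ for a suitable compact $H'\supseteq W$, so that the function $\nu\mapsto\nu(B\times H)$ is bounded on $\MW$ by uniform translation boundedness. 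Then I would apply the generalised Birkhoff ergodic theorem along the tempered van Hove sequence $(A_n)_n$ (Lindenstrauss \cite{Lindenstrauss2001}) to the bounded measurable function $f(\nu)=\nu(B\times H)$: since $A_n$ can be tiled (up to a van Hove-negligible boundary) by translates of $B$, the ergodic averages $\frac1{m_G(A_n)}\sum$ of $f\circ S_g$ over a $B$-packing of $A_n$ converge $P$-a.e. to $\frac1{m_G(B)}\int_{\MW} f\,dP$, while the same Riemann-sum argument shows this packing average differs from $\frac{\nu(A_n\times H)}{m_G(A_n)}$ by a term controlled by $\nu(\partial^{B}A_n\times H)/m_G(A_n)\to0$ (here the semi-uniform bound \eqref{eq:nu-density} from Theorem \ref{theo:Moody} is what makes the boundary contribution vanish uniformly). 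This yields both the existence of $D_P$ and the identity \eqref{eq:D_P-formula}; upper semicontinuity of $P\mapsto D_P$ is then immediate since $\nu\mapsto\nu(B\times H)$ is bounded and vaguely upper semicontinuous (it is $\nu$ of a set whose closure is $\overline B\times H$, and we may arrange $\overline B\times H$ to have $\nu$-null boundary for $\QM$, or simply use that $\int\nu(\inn(B)\times H)\,dP\le D_P\le\int\nu(\overline B\times H)\,dP$ and let $B$ shrink).

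For the inequality $D_P\le\dL\cdot m_H(W)$ and the equality case I would argue as follows. By Proposition \ref{prop:projection-usc}a every $\nu\in\MW$ satisfies $\nu\le\nuW(\hx)$ for $\hx=\hat\pi(\nu)$ (when $\nu\ne\0$; the case $\nu=\0$ is trivial), hence $\nu(A_n\times H)\le\nuW(\hx)(A_n\times H)$ for all $n$, and dividing by $m_G(A_n)$ and using the semi-uniform upper bound \eqref{eq:nu-density} of Theorem \ref{theo:Moody} gives $D_P\le\dL\cdot m_H(W)$ directly. For the equality case: the factor map $\pihX_*\circ(\piGH_*)^{-1}=\hat\pi:(\MW\setminus\{\0\},S)\to(\hX,\hT)$ pushes $P$ forward to an $\hT$-invariant probability measure on $\hX$, which by unique ergodicity of $(\hX,\hT)$ (consequence \ref{en:ass}(5)) must be $m_\hX$ (the case $P=\delta_\0$ forces $W$ to have $m_H(W)=0$ by Proposition \ref{prop:projection-usc}, contradicting $D_P=\dL m_H(W)>0$ unless $W=\emptyset$, which is the degenerate case). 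Now $D_P=\dL\cdot m_H(W)=D_{\QM}$ together with $\nu\le\nuW(\hat\pi(\nu))$ forces, via \eqref{eq:D_P-formula} applied to $P$ and to $\QM=m_\hX\circ\nuW^{-1}$,
\begin{displaymath}
\int_{\MW}\bigl(\nuW(\hat\pi(\nu))(B\times H)-\nu(B\times H)\bigr)\,dP(\nu)=0,
\end{displaymath}
and the integrand is non-negative and integer-valued (in fact $0$ or $1$ by the choice of $B$ and Proposition \ref{prop:projection-usc}a), so it vanishes $P$-a.e. Letting $B$ run over a countable neighbourhood basis of $0$ in $G$ and using $\sigma$-compactness of $G$ to cover $G\times H$ by countably many such boxes, we get $\nu(C)=\nuW(\hat\pi(\nu))(C)$ for all $C$ in a generating semiring, hence $\nu=\nuW(\hat\pi(\nu))$ for $P$-a.e. $\nu$; that is, $P$ is concentrated on the graph $\cG\nuW$, so $P=(\hat\pi)_*P\circ\nuW^{-1}=m_\hX\circ\nuW^{-1}=\QM$ (using that $\hat\pi$ and $\nuW$ are mutually inverse on $\cG\nuW$). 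The reverse implication $P=\QM\Rightarrow D_P=\dL m_H(W)$ is Theorem \ref{theo:Moody} itself.

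The main obstacle I anticipate is the passage from the ergodic-averaging statement for the single test function $\nu\mapsto\nu(B\times H)$ to the genuine spatial density $\nu(A_n\times H)/m_G(A_n)$: one must show that tiling $A_n$ by $\LL$-compatible translates of $B$ introduces only a van Hove-boundary error, and crucially that this error is controlled \emph{uniformly in $\nu\in\MW$}, which is exactly where the compactness of $W$ and the semi-uniform bound \eqref{eq:nu-density} enter — without compactness of $W$ the $\partial^B A_n$ contribution need not be uniformly small. A secondary technical point is choosing $B$ so that $\nu(B\times H)\in\{0,1\}$ simultaneously with $B\times H$ absorbing the relevant part of $H$; this is handled by the disjoint-translates zero neighbourhood $V$ from consequence \ref{en:ass}(3) and by noting $\supp\nuW(\hx)\subseteq G\times W$ with $W$ compact, so that only a bounded slab of $H$ is ever relevant. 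Everything else is routine measure theory and an appeal to results already in the excerpt.
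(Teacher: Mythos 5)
Your proposal is correct and follows essentially the same route as the paper: reduce via $\hat\pi$ and unique ergodicity of $(\hX,\hT)$ to $P\circ\hat\pi^{-1}=m_\hX$, get the upper bound from $\nu\leqslant\nuW(\hat\pi(\nu))$ together with the semi-uniform bound of Theorem~\ref{theo:Moody}, obtain \eqref{eq:D_P-formula} from the Lindenstrauss ergodic theorem applied to the upper semicontinuous function $\nu\mapsto\nu(B\times H)$ with van Hove boundary control, and settle the equality case by the domination argument forcing $\nu=\nuW(\hat\pi(\nu))$ $P$-a.e., i.e.\ $P=\QM$. The only cosmetic difference is that the paper compares the spatial density with the integral averages $\frac{1}{m_G(A_n)}\int_{A_n}(S_g\nu)(B\times H)\,dm_G(g)$ via a convolution estimate with $1_{-A_n}\ast 1_B$ — the form in which Lindenstrauss's theorem literally applies — whereas you phrase it through a $B$-packing Riemann sum; the van Hove boundary comparison you sketch is exactly the step that makes either version work.
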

\noindent The proof of this theorem, which follows closely the proof of Moody's theorem,  is given in Section~\ref{sec:Moody-proof}.
\begin{corollary}\label{coro:Moody-extension}
Also the assertions of Theorem~\ref{theo:Moody-extension} remain valid if $\MW$, $\QM$ and $\nu(\,.\,\times H)$ are replaced by $\MWG$, $\QMG$ and $\nu(\,.\,)$, respectively. 
\end{corollary}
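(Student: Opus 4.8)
The plan is to deduce the $G$-version directly from Theorem~\ref{theo:Moody-extension} by transporting everything through the factor map $\piG_*:\MW\to\MWG$. Recall from Lemma~\ref{lemma:diagrams-basic} and Proposition~\ref{prop:projection-homeo}a that $\piG_*$ is a continuous, surjective, $S$-equivariant map, and that $\piG_*\circ\nuW=\nuWG$, so that $\piG_*{}_*\QM=\QMG$. The key observation that makes the reduction trivial at the level of the density functional is the identity already recorded in the Corollary after Theorem~\ref{theo:Moody}, namely $\nu(A\times H)=(\piG_*\nu)(A)$ for every $\nu\in\cM$ and every measurable $A\subseteq G$; indeed $(\piG_*\nu)(A)=\nu((\piG)^{-1}(A))=\nu(A\times H)$ by definition. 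Thus the Birkhoff ratios $\nu(A_n\times H)/m_G(A_n)$ appearing in Theorem~\ref{theo:Moody-extension} literally coincide with the ratios $(\piG_*\nu)(A_n)/m_G(A_n)$ appearing in the Corollary, and likewise $\nu(B\times H)=(\piG_*\nu)(B)$ for the zero neighbourhood $B$ in~\eqref{eq:D_P-formula}.

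First I would fix an ergodic $S$-invariant probability measure $P^G$ on $\MWG$ and lift it to an ergodic $S$-invariant probability measure $P$ on $\MW$ with $\piG_*{}_*P=P^G$: such a lift exists because $\piG_*$ is a factor map between compact metric systems (choose any invariant lift and take an ergodic component, using that ergodic components push forward to invariant measures and that $P^G$ is already ergodic, or invoke the standard fact that factor maps admit ergodic lifts). Applying Theorem~\ref{theo:Moody-extension} to $P$ gives, for $P$-a.e.\ $\nu\in\MW$, existence of $D_P=\lim_n \nu(A_n\times H)/m_G(A_n)\le\dL\cdot m_H(W)$, with the closed-form expression $D_P=\frac{1}{m_G(B)}\int_{\MW}\nu(B\times H)\,dP(\nu)$. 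Using the identity $\nu(A\times H)=(\piG_*\nu)(A)$ and the change-of-variables formula $\int_{\MW}\nu(B\times H)\,dP(\nu)=\int_{\MWG}\mu(B)\,dP^G(\mu)$, this transfers verbatim: for $P^G$-a.e.\ $\mu\in\MWG$ the limit $D_{P^G}:=\lim_n \mu(A_n)/m_G(A_n)$ exists, equals $\frac{1}{m_G(B)}\int_{\MWG}\mu(B)\,dP^G(\mu)$, and is $\le\dL\cdot m_H(W)$. Upper semicontinuity of $P^G\mapsto D_{P^G}$ follows from~\eqref{eq:D_P-formula} exactly as in the $\cM$ case, since $\mu\mapsto\mu(B)$ is upper semicontinuous in the vague topology for compact $B$ and bounded on $\MWG$ by translation boundedness.

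The only genuinely substantive point is the equality clause: $D_{P^G}=\dL\cdot m_H(W)$ $P^G$-a.e.\ if and only if $P^G=\QMG$. The ``if'' direction is immediate since $D_{\QMG}$ is computed from~\eqref{eq:D_P-formula} with $P^G=\QMG$ and the integral equals $\dL\cdot m_H(W)\cdot m_G(B)$ by Theorem~\ref{theo:Moody} (or by the already-noted pushforward identity from $D_{\QM}$). For the ``only if'' direction, suppose $D_{P^G}=\dL\cdot m_H(W)$. Pick an ergodic lift $P$ of $P^G$ as above; then $D_P=D_{P^G}=\dL\cdot m_H(W)$ by the identity of Birkhoff ratios, so Theorem~\ref{theo:Moody-extension} forces $P=\QM$, whence $P^G=\piG_*{}_*P=\piG_*{}_*\QM=\QMG$. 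I expect this last implication — ensuring that the extremal-density property is inherited by *some* ergodic lift so that the $\cM$-version's rigidity applies — to be the main (and only real) obstacle; once the existence of an ergodic lift is in hand, everything else is a mechanical translation through the identity $\nu(A\times H)=(\piG_*\nu)(A)$.
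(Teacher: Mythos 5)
Your proposal is correct and follows essentially the same route as the paper: the reduction is immediate once one knows that every ergodic $S$-invariant measure on $\MWG$ is the image under $\piG_*$ of an ergodic $S$-invariant measure on $\MW$, which is exactly the point the paper isolates and proves. The only difference is one of detail: where you invoke the existence of an ergodic lift as a standard fact (any lift, average over a van Hove sequence, take an ergodic component), the paper carries this out explicitly, constructing an initial (non-invariant) lift by extending the measure from the pullback $\sigma$-algebra $(\piG_*)^{-1}(\cB(\MWG))$, then averaging and passing to ergodic components -- so your argument matches the paper's in substance.
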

\noindent
This is obvious for measures $P$ on $\MWG$ that can be represented as $\tilde{P}\circ(\piG_*)^{-1}$ with some ergodic $S$-invariant probability $\tilde{P}$ on $\MW$. 
That all ergodic $S$-invariant $P$ on $\MWG$ can be represented in this way is proved in Section~\ref{sec:Moody-proof}.

For the rest of this section we fix one tempered van Hove sequence $(A_n)_{n\in\N}$. Our final theorem highlights the exceptional role played by configurations 
$\nuW(\hx)$
with maximal density with respect to $(A_n)_{n\in\N}$, i.e., by configurations $\nuW(\hx)$ with
\begin{equation}
\hx\in \hXmax
:=
\left\{\hx\in\hX: \lim_{n\to\infty}\nuW(\hx)(A_n\times H)/m_G(A_n)=D_{\QM}\right\}.
\end{equation}
These points occur already in \cite{Moody2002}, and their importance was stressed more recently by Nicolae Strungaru \cite{S14}. Here we show that the set
$\nuW(\hXmax)\subseteq\MW$ coincides with the set of generic points for the $S$-invariant probability measure $\QM$ on $\MW$. Observe before that
\begin{equation}
\hXmax=\hXmaxG:=
\left\{\hx\in\hX: \lim_{n\to\infty}\nuWG(\hx)(A_n)/m_G(A_n)=D_{\QM}\right\}.
\end{equation}

\begin{theorem} \label{theo:Moody-topogical}(Configurations of maximal density are generic for the Mirsky measure)
\begin{enumerate}[a)]
\item  $\hXmax$ is $\hT$-invariant and
$m_\hX(\hXmax)=1$.
\item
For each $\hx\in\hXmax$ the empirical measures 
\begin{equation*}
Q_{n,\hx}
:=
\frac{1}{m_G(A_n)}\int_{A_n}\delta_{S_{g}\nuW(\hx)}\,dm_G(g)
\end{equation*}
converge weakly to $\QM$. 
\item 
For $\hx\in\hX$ denote by $\MW(\hx)$ the orbit closure of $\nuW(\hx)$ under the action of $S$ in $\MW$.
Then $\QM (\MW(\hx))=1$ for all $\hx\in\hXmax$, i.e.,
$\supp(\QM)\subseteq\MW(\hx)\subseteq\MW$ for all $\hx\in\hXmax$.
\item 
$\supp(\QM)=\MW(\hx)$ for all $\hx\in\hXmax\cap\nuW^{-1}(\supp(\QM))$, and $m_\hX(\hXmax\cap\nuW^{-1}(\supp(\QM)))=1$.
\item $\overline{\nuW(\CW)}\subseteq\supp(\QM)$.
\end{enumerate}
\end{theorem}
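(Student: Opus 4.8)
The plan is to show $\overline{\nuW(\CW)}\subseteq\supp(\QM)$ by combining three facts already available: (i) $\CW$ has full Haar measure whenever $m_H(\partial W)=0$, but in general $\CW$ is only a dense $G_\delta$ (Proposition~\ref{prop:projection-usc}c), so we cannot argue by pure measure theory and must exploit continuity; (ii) $\supp(\QM)$ is a closed $S$-invariant set; (iii) $\QM=m_\hX\circ\nuW^{-1}$, so $\QM(\nuW^{-1}(U))=m_\hX(\nuW^{-1}(U))$ for every open $U\subseteq\MW$. The key observation is that at a continuity point $\hx\in\CW$ the map $\nuW$ is continuous, so if $U$ is any vague neighbourhood of $\nuW(\hx)$ then $\nuW^{-1}(U)$ contains a neighbourhood of $\hx$ in $\hX$, hence has positive Haar measure (Haar measure of a compact group has full support). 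Therefore $\QM(U)=m_\hX(\nuW^{-1}(U))>0$ for every such $U$, which is exactly the statement that $\nuW(\hx)\in\supp(\QM)$. Since $\supp(\QM)$ is closed, it then also contains the closure $\overline{\nuW(\CW)}$.

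More precisely, I would proceed as follows. First fix $\hx\in\CW$ and an arbitrary open neighbourhood $U$ of $\nuW(\hx)$ in $\MW$. By the definition of $\CW$, the map $\nuW:\hX\to\cM$ is continuous at $\hx$; concretely, for every sequence $\hx^n\to\hx$ we have $\nuW(\hx^n)\to\nuW(\hx)$ vaguely. Continuity at $\hx$ gives an open neighbourhood $\hat N$ of $\hx$ in $\hX$ with $\nuW(\hat N)\subseteq U$, i.e. $\hat N\subseteq\nuW^{-1}(U)$. Since $\hX$ is a compact abelian group, the normalised Haar measure $m_\hX$ has full support, so $m_\hX(\hat N)>0$ and hence $\QM(U)=m_\hX\circ\nuW^{-1}(U)\geqslant m_\hX(\hat N)>0$. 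As $U$ was an arbitrary open neighbourhood of $\nuW(\hx)$, this shows $\nuW(\hx)\in\supp(\QM)$. Thus $\nuW(\CW)\subseteq\supp(\QM)$, and taking closures and using that $\supp(\QM)$ is closed in $\MW$ yields $\overline{\nuW(\CW)}\subseteq\supp(\QM)$.

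The only mild subtlety — and what I would flag as the ``main obstacle'', though it is not a serious one — is to make sure that the continuity of $\nuW$ at points of $\CW$ is being used in the correct topological sense: $\CW$ was defined via sequential continuity, so one should note that $\hX$ and $\cM$ (hence $\MW$) are metrisable Polish spaces (item (1) and (6) of Subsection~\ref{en:ass}), so that sequential continuity at $\hx$ is genuine continuity at $\hx$ and the preimage of the open set $U$ contains an honest open neighbourhood of $\hx$. Once metrisability is invoked the argument is immediate. One could also phrase it without neighbourhoods: if $\nuW(\hx)\notin\supp(\QM)$ then there is an open $U\ni\nuW(\hx)$ with $\QM(U)=0$, i.e. $m_\hX(\nuW^{-1}(U))=0$; but $\nuW^{-1}(U)$ is a $\QM$-null, hence $m_\hX$-null, set containing a neighbourhood of the continuity point $\hx$, contradicting full support of $m_\hX$. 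Either formulation works, and neither requires anything beyond Proposition~\ref{prop:projection-usc}, the definition of $\QM$, and full support of Haar measure.
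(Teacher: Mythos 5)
Your argument for part e) is correct and is essentially the paper's own proof of that part: the paper phrases it contrapositively (if $\nuW(\hx)\not\in\supp(\QM)$, choose an open $O\ni\nuW(\hx)$ with $\QM(O)=m_\hX(\nuW^{-1}(O))=0$; continuity of $\nuW$ at $\hx\in\CW$ yields an open $U\ni\hx$ with $\nuW(U)\subseteq O$, hence $m_\hX(U)=0$, contradicting full support of Haar measure), which is exactly the second formulation you sketch. Your remark about sequential versus topological continuity is harmless, since $\hX$ and $\cM$ are metrisable, as you note.

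The problem is that the statement comprises parts a)--e) of Theorem~\ref{theo:Moody-topogical}, and your proposal only treats e); the omitted parts are not formalities. Part a) requires $m_\hX(\hXmax)=1$, which the paper deduces from Theorem~\ref{theo:Moody-extension} (namely $D_{\QM}=\dL\cdot m_H(W)$ together with the characterisation of equality), and the $\hT$-invariance uses $S_{-g}\nuW(\hx)=\nuW(\hT_{-g}\hx)$ plus the fact that $(g+A_n)_n$ is again a van Hove sequence. Part b) is the heart of the theorem: since $\MW$ is compact, every subsequence of $(Q_{n,\hx})_n$ has a weakly convergent sub-subsequence with $S$-invariant limit $P$; the boundary estimate leading to (\ref{eq:vanHove-consequence}), combined with upper semicontinuity of $\nu\mapsto\nu(B\times H)$ and the Portmanteau theorem, gives $D_P\geqslant D_{\QM}$, and then the uniqueness assertion of Theorem~\ref{theo:Moody-extension} forces $P=\QM$, so all limit points coincide with $\QM$. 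Parts c) and d) are then short consequences of b), translation invariance of $m_\hX$, and closedness of $\supp(\QM)$. As it stands, your proposal establishes only one of the five assertions, and the one that carries most of the content (part b), together with the parts depending on it, is missing.
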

\begin{corollary}
All assertions of this theorem remain valid if $\MW, \hXmax, \nuW(\hx), \QM$ and $\MW(\hx)$ are replaced by the corresponding objects
$\MWG, \hXmaxG, \nuWG(\hx), \QMG$ and $\MWG(\hx)$, respectively. For a) and b) this is obvious, for c) and d) one notes that $\piG_*(\MW(\hx))=\MWG(\hx)$ 
and $\piG_*(\supp(\QM))=\supp(\QMG)$, because the involved spaces are compact.
Finally, e) follows from Remark~\ref{rem:CW}.
\end{corollary}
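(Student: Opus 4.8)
The plan is to exploit that $\piG_*:\MW\to\MWG$ is a continuous, surjective, $S$-equivariant factor map (Lemma~\ref{lemma:diagrams-basic} and Proposition~\ref{prop:projection-homeo}a) which intertwines all the relevant objects, so that every assertion is transported along $\piG_*$. Concretely, $\nuWG=\piG_*\circ\nuW$ by definition, whence $\QMG=m_\hX\circ(\nuWG)^{-1}=\QM\circ(\piG_*)^{-1}=(\piG_*)_*\QM$, i.e.\ $\QMG$ is the push forward of $\QM$ under $\piG_*$. Moreover $\hXmaxG=\hXmax$, as already recorded in the excerpt (since $\nuWG(\hx)(A_n)=\nuW(\hx)(A_n\times H)$). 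I will use repeatedly that (i) push forward under a continuous map is continuous for the weak topologies, and (ii) for the continuous map $\piG_*$ between the compact metric spaces $\MW$ and $\MWG$ one has $\supp((\piG_*)_*\QM)=\overline{\piG_*(\supp\QM)}=\piG_*(\supp\QM)$, the last equality because $\supp\QM$ is compact; hence $\supp(\QMG)=\piG_*(\supp(\QM))$.

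Parts a) and b) are then immediate. Since $\hXmaxG=\hXmax$, the $\hT$-invariance and $m_\hX(\hXmaxG)=1$ are just part a) of the theorem. For b), equivariance $\piG_*\circ S_g=S_g\circ\piG_*$ together with $\piG_*\circ\nuW=\nuWG$ gives $(\piG_*)_*Q_{n,\hx}=Q^{\scriptscriptstyle G}_{n,\hx}$, where $Q^{\scriptscriptstyle G}_{n,\hx}$ denotes the empirical measures built from $\nuWG(\hx)$; applying the weakly continuous map $(\piG_*)_*$ to the convergence $Q_{n,\hx}\to\QM$ of theorem part b) yields $Q^{\scriptscriptstyle G}_{n,\hx}\to(\piG_*)_*\QM=\QMG$.

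For c) and d) I first record $\piG_*(\MW(\hx))=\MWG(\hx)$: equivariance maps the $S$-orbit of $\nuW(\hx)$ onto that of $\nuWG(\hx)$, and since $\MW(\hx)$ is compact its continuous image is closed and therefore equals the orbit closure $\MWG(\hx)$. Then c) follows from $\QMG(\MWG(\hx))=\QM((\piG_*)^{-1}\MWG(\hx))\geq\QM(\MW(\hx))=1$. For d) the measure statement follows from the inclusion $\hXmaxG\cap(\nuWG)^{-1}(\supp\QMG)\supseteq\hXmax\cap\nuW^{-1}(\supp\QM)$, valid because $\supp\QM\subseteq(\piG_*)^{-1}(\supp\QMG)$ by the support identity above, the right-hand set having full measure by theorem part d). The equality $\supp(\QMG)=\MWG(\hx)$ is the point requiring care and is the main (mild) obstacle: one cannot simply quote the theorem pointwise, because the admissible set of $\hx$ is genuinely larger than $\nuW^{-1}(\supp\QM)$ (the fibres of $\piG_*$ over $\supp\QMG$ need not lie in $\supp\QM$). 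Instead I argue directly: for $\hx\in\hXmaxG$, part c) gives $\supp(\QMG)\subseteq\MWG(\hx)$ since $\supp(\QMG)$ is the smallest closed set of full measure; conversely $\hx\in(\nuWG)^{-1}(\supp\QMG)$ means precisely $\nuWG(\hx)\in\supp(\QMG)$, and as $\supp(\QMG)$ is closed and $S$-invariant it contains the whole orbit closure $\MWG(\hx)$.

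Finally, for e) I apply $\piG_*$ to the inclusion $\overline{\nuW(\CW)}\subseteq\supp(\QM)$ of theorem part e). By continuity and compactness $\piG_*(\overline{\nuW(\CW)})$ is closed and contains $\piG_*(\nuW(\CW))=\nuWG(\CW)$, hence contains $\overline{\nuWG(\CW)}$; combined with $\piG_*(\supp\QM)=\supp(\QMG)$ this yields $\overline{\nuWG(\CW)}\subseteq\supp(\QMG)$. That the relevant continuity set is still $\CW$ (rather than some set attached a priori to $\nuWG$) is exactly the content of Remark~\ref{rem:CW}b, which identifies $\CW$ as the set of continuity points of $\nuWG$ as well.
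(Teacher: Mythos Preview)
Your proof is correct and follows essentially the same route as the paper: both rely on the continuous equivariant factor map $\piG_*$, the identity $\QMG=(\piG_*)_*\QM$, and the compactness-based facts $\piG_*(\MW(\hx))=\MWG(\hx)$ and $\piG_*(\supp\QM)=\supp\QMG$. The only minor variation is in part~e): the paper's hint (``follows from Remark~\ref{rem:CW}'') suggests simply rerunning the continuity argument from the proof of Theorem~\ref{theo:Moody-topogical}e with $\nuWG$ in place of $\nuW$, whereas you instead push the inclusion $\overline{\nuW(\CW)}\subseteq\supp\QM$ forward through $\piG_*$; both arguments are equally short and valid.
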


Combining Theorem~\ref{theo:projection-mth}a), Theorem~\ref{theo:Moody-topogical}c) and the previous corollary, we arrive at the following result.

\begin{corollary}\label{cor:pp-spectrum}
Consider the case $m_H(W)>0$. For any $\hx\in \hX_{max}$, the systems $(\hX,m_\hX,\hT)$ and $(\MW(\hx),\QM,S)$ are measure theoretically isomorphic, and the system $(\MWG(\hx),\QMG,S)$ is a factor of $(\hX,m_\hX,\hT)$ via the factor map $\nuWG$. 
In particular, $(\MWG(\hx),\QMG,S)$ has pure point dynamical spectrum, and its group of eigenvalues is a subgroup of the group of eigenvalues of $(\hX,m_\hX,\hT)$.
\end{corollary}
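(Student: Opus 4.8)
The plan is to combine the three ingredients flagged just before the statement, with no new ideas beyond careful bookkeeping. Fix $\hx\in\hXmax$.

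\textbf{Step 1 (localise the Mirsky measures).} By Theorem~\ref{theo:Moody-topogical}c) one has $\QM(\MW(\hx))=1$, and by the corollary immediately following that theorem — using that $\hXmax=\hXmaxG$, as recorded above — also $\QMG(\MWG(\hx))=1$. Since $\MW(\hx)$ is by definition the closure of the $S$-orbit of $\nuW(\hx)$, it is a closed, hence Borel, $S$-invariant subset of $\MW$ of full $\QM$-measure; likewise $\MWG(\hx)\subseteq\MWG$ is $S$-invariant Borel of full $\QMG$-measure. Restricting a measure-preserving action to an invariant Borel set of full measure produces a measure-theoretically isomorphic system, so $(\MW(\hx),\QM,S)\cong(\MW,\QM,S)$ and $(\MWG(\hx),\QMG,S)\cong(\MWG,\QMG,S)$ as measure-theoretic dynamical systems.

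\textbf{Step 2 (transport through Theorem~\ref{theo:projection-mth}a)).} That theorem provides a measure-theoretic isomorphism $(\MW,\QM,S)\cong(\hX,m_\hX,\hT)$, which together with Step 1 yields $(\MW(\hx),\QM,S)\cong(\hX,m_\hX,\hT)$. It also realises $(\MWG,\QMG,S)$ as a measurable factor of $(\hX,m_\hX,\hT)$ via $\nuWG$, where $(\nuWG)_*m_\hX=\QMG$ and $\nuWG$ intertwines $\hT$ and $S$. As $\QMG$ is concentrated on $\MWG(\hx)$, the set $\{\hx':\nuWG(\hx')\in\MWG(\hx)\}$ has full $m_\hX$-measure, so the same map $\nuWG$ presents $(\MWG(\hx),\QMG,S)$ as a factor of $(\hX,m_\hX,\hT)$.

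\textbf{Step 3 (pure point spectrum).} By Theorem~\ref{theo:projection-mth}a), $(\MWG,\QMG,S)$ has pure point dynamical spectrum with eigenvalue group a subgroup of that of $(\hX,m_\hX,\hT)$; transporting this through the measure-theoretic isomorphism $(\MWG(\hx),\QMG,S)\cong(\MWG,\QMG,S)$ from Step 1 gives the same for $(\MWG(\hx),\QMG,S)$. Equivalently one may deduce it from the factorisation of Step 2, using that pure point spectrum passes to measurable factors and that the eigenvalue group of a factor embeds into that of the extension (see e.g.~\cite{Walters1982}), together with the fact that $(\hX,m_\hX,\hT)$, being a minimal rotation on a compact second countable abelian group equipped with Haar measure, has pure point spectrum with eigenfunctions exactly the characters of $\hX$.

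I do not anticipate a genuine obstacle: all the substance lies in Theorems~\ref{theo:projection-mth} and~\ref{theo:Moody-topogical}. The one point requiring care is the elementary observation underlying Step 1 — that the orbit closure of a $\QM$- (respectively $\QMG$-)generic point carries full measure, which is precisely what Theorem~\ref{theo:Moody-topogical}c) supplies — so that passing from the ambient hulls $\MW,\MWG$ to the individual orbit closures $\MW(\hx),\MWG(\hx)$ costs nothing at the measure-theoretic level. Everything else is transport of structure along isomorphisms and factor maps already produced in the cited theorems.
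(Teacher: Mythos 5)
Your proposal is correct and follows exactly the route the paper intends: the corollary is stated as a combination of Theorem~\ref{theo:projection-mth}a), Theorem~\ref{theo:Moody-topogical}c) and the corollary transferring the latter to $\MWG$, $\QMG$, $\MWG(\hx)$, which is precisely what your Steps 1--3 carry out. The only content beyond those citations is the standard observation that restricting to a closed invariant set of full measure changes nothing measure-theoretically, and you handle that correctly.
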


\begin{remark}\label{remark:support}
In view of Theorem~\ref{theo:Moody-topogical}e it may be worth noting that the two sets
$\overline{\nuW(\CW)}$ and $\supp(\QM)$ are the topological respectively measure theoretic result of the process to rid the set $\MW$ of ``negligible'' parts.
Indeed, as the topology on $\MW$ has a countable base and as $\CW$ is a dense $G_\delta$-subset of $\hX$, it is not hard to show that
\begin{compactenum}[a)]
\item $\overline{\nuW(\CW)}$ is the intersection of all sets $\overline{\nuW(R)}$ where $R$ ranges over all dense $G_\delta$-subsets of $\hX$, 
\item $\supp(\QM)$ is the intersection of all sets $\overline{\nuW(F)}$ where $F$ ranges over all full measure subsets of $\hX$,
\item $\overline{\nuW(\CW)}=\supp(\QM)$ if $m_H(\partial W)=0$.
\end{compactenum}
The same statements hold for $\nuWG$ and $\QMG$.
The proofs are provided in Section~\ref{sec:Moody-proof}.
\end{remark}

The next corollary is, in conjunction with Theorem~\ref{theo:projection-top}a and Theorem~\ref{theo:projection-mth}c, an immediate consequence of the previous remark.

\begin{corollary}(Strict ergodicity)\label{coro:strict}\\
Whenever $m_H(\partial W)=0$ and $\inn(W)\ne\emptyset$, the restrictions of $S$ to $\supp(\QM)$ and to $\supp(\QMG)$ are strictly ergodic, i.e., minimal and uniquely ergodic.
\end{corollary}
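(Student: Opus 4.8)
The plan is to assemble the claim directly from the three ingredients quoted immediately before it. Note first that $\inn(W)\ne\emptyset$ forces $m_H(W)\geqslant m_H(\inn(W))>0$, since the Haar measure of a non-empty open set is positive; hence Theorem~\ref{theo:projection-mth} is applicable, and moreover $\ZW=\emptyset$ by Proposition~\ref{prop:projection-usc}e. With the second hypothesis $m_H(\partial W)=0$ in force, Remark~\ref{remark:support}c) provides the crucial identifications $\supp(\QM)=\overline{\nuW(\CW)}$ and $\supp(\QMG)=\overline{\nuWG(\CW)}$.

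For minimality, since $\inn(W)\ne\emptyset$ we are in case a) of Theorem~\ref{theo:projection-top}, which asserts that the restriction of $S$ to $\overline{\nuW(\CW)}$ is an almost automorphic extension of $(\hX,\hT)$, in particular minimal; by the identification above, $(\supp(\QM),S)$ is therefore minimal. For $(\supp(\QMG),S)$ I would not invoke Corollary~\ref{cor:projection-top}b (which carries an extra aperiodicity hypothesis), but rather use that $\piG_*(\supp(\QM))=\supp(\QMG)$ --- valid because the spaces involved are compact, as noted in the corollary to Theorem~\ref{theo:Moody-topogical} --- together with the continuity and $S$-equivariance of $\piG_*:\MW\to\MWG$ (Proposition~\ref{prop:projection-homeo}a and Lemma~\ref{lemma:diagrams-basic}): a continuous equivariant image of a minimal system is minimal.

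For unique ergodicity, Theorem~\ref{theo:projection-mth}c asserts that $\QM$ (resp.\ $\QMG$) is the \emph{only} $S$-invariant probability measure on $\MW$ (resp.\ $\MWG$); a fortiori there is at most one $S$-invariant probability measure carried by the closed invariant set $\supp(\QM)$ (resp.\ $\supp(\QMG)$), and $\QM$ (resp.\ $\QMG$), which has full mass on its own support, is such a measure. Hence these subsystems are uniquely ergodic, and combined with the preceding paragraph they are strictly ergodic. The only genuinely non-bookkeeping point is to verify that the hypotheses line up exactly --- that $\inn(W)\ne\emptyset$ is precisely the branch of Theorem~\ref{theo:projection-top}a producing minimality of $\overline{\nuW(\CW)}$, and that $m_H(\partial W)=0$ is exactly what makes both Remark~\ref{remark:support}c) and Theorem~\ref{theo:projection-mth}c available; once this is checked, the corollary follows by composing the cited statements, which is why it is indeed immediate.
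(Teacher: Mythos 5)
Your proof is correct and follows essentially the same route as the paper, which derives the corollary from Remark~\ref{remark:support}c together with Theorem~\ref{theo:projection-top}a (minimality of $\overline{\nuW(\CW)}$) and Theorem~\ref{theo:projection-mth}c (unique ergodicity), noting as you do that $\inn(W)\ne\emptyset$ gives $m_H(W)>0$. Your explicit factor argument $\piG_*(\supp(\QM))=\supp(\QMG)$ for minimality of the projected system merely spells out what the paper leaves implicit, and correctly avoids the aperiodicity hypothesis of Corollary~\ref{cor:projection-top}b.
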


\begin{remark}\label{remark:patterns}(Pattern frequencies)\\
Let us call any non-empty, finite $\LL_1\subset \LL$ a local configuration or pattern. We are interested in how frequently shifted copies of a given pattern appear in some fixed configuration $\nu\in \MW$. For some van Hove sequence $(A_n)_{n\in\N}$ in $G$, we may thus consider relative frequencies
\begin{displaymath}
\begin{split}
f_n(\LL_1,\nu)
&=
\frac{\card\left\{(g, h) \in A_n\times H: -(g,h) + \LL_1\subseteq \supp(\nu)\right\}}{m_G(A_n)}
\end{split}
\end{displaymath}
and ask whether a limiting frequency $f(\LL_1,\nu)$ exists as $n\to\infty$. 
As a corollary to Theorem~\ref{theo:Moody-topogical}b, we prove in Section~\ref{sec:Moody-proof} that for configurations $\nuW(\hx)$ with maximal density  this is indeed the case, namely
\begin{equation}\label{form:pf}
\lim_{n\to\infty} f_n(\LL_1,\nuW(\hx))
=
f(\LL_1,\nuW(\hx))
=
\dL\cdot m_H\left(\bigcap_{\ell\in \LL_1}(W-\ell_H)\right)\quad\text{for all $\hx\in\hXmax$.}
\end{equation}
As $\piG_*|_\LL$ is \oneone, it is easily checked that the projected configurations in $\MWG$ have the same pattern frequencies, i.e.,
\begin{equation*}
f_n(\LL_1,\nu)
=
\frac{\card\left\{g\in A_n: -g+ \piG(\LL_1)\subseteq \supp(\piG_*\nu)\right\}}{m_G(A_n)}\ .
\end{equation*}

\end{remark}

\subsection{Remarks and comments}

\begin{remark}(Generic windows and repetitive model sets)\label{rem:generic-window}
\\
It follows from a characterisation of the set $\CW$ in Lemma~\ref{lemma:projection-CW}, that a point $\hx=(x+\LL)$ belongs to $\CW$ if and only if $\piH(\LL)\cap((\partial W)-x_H)=\emptyset$, i.e., if and only if the window $W-x_H$ is generic in the sense of Schlottmann \cite{Schlottmann1998}. If $\inn(W)\ne\emptyset$, then Theorem~\ref{theo:projection-top}a  shows that the subsystem $\overline{\nuW(\CW)}$ is almost automorphic, in particular minimal. Observe $\overline{\nuW(\CW)}=\MW(\hx)$ and hence also $\overline{\nuWG(\CW)}=\MWG(\hx)$
for each point $\hx\in \CW$. But minimality is equivalent to almost periodicity which translates to repetitivity in the traditional language of model sets. Hence we rediscover the fact that generic windows with non-empty interior generate repetitive model sets/configurations. For topologically regular windows, this was also proved by Robinson \cite[Prop. 5.18 and Cor. 5.20]{Robinson2007}.  Note, however, that when $m_H(\partial W)>0$, then $m_\hX(\CW)=0$ so that the set of these configurations has Mirsky measure zero.
\end{remark}

\begin{remark}(Some open problems related to $\supp(\QM)$)\\
Theorem~\ref{theo:Moody-topogical} suggests that the ``relevant'' dynamical system to look at is not $(\MW,S)$ but $(\supp(\QM ),S)$, because it has the property that the orbit of $\nuW(\hx)$ is dense in this space for all $\hx\in\hXmax\cap\nuW^{-1}(\supp(\QM))$. This imposes the following questions:
\begin{compactenum}[i)]
\item Which points belong to $\MW\setminus\supp(\QM)$\,?\\ 
A very partial answer is that $\0\in\supp(\QM)$ whenever $\0$ belongs to $\MW$ at all (combine Proposition~\ref{prop:projection-usc}d,e with Theorem~\ref{theo:Moody-topogical}c).
\item Which points belong to $\nuW(\hXmax)\setminus\supp(\QM)$\,? \\
Observe that if $\hx\in\hXmax$ but $\nuW(\hx)\not\in\supp(\QM)$, then
there is a (locally specified) open neighbourhood $U$ of $\nuW(\hx)$ in $\MW$ such that
$\QM(U)=0$. As $\nuW(\hx)$ is generic for $\QM$, it follows that the local pattern of $\nuW(\hx)$ specified by $U$ can repeat in $\nuW(\hx)$ only with density zero. 
\end{compactenum}
These questions will be studied for the examples in Subsection~\ref{subsec:interval-windows}. The following proposition indicates that the general case is subtle. A proof can be found at the end of Section~\ref{sec:Moody-proof}.

\begin{proposition}\label{prop:exceptional-points}
Let $\hx=(x_G,x_H)+\LL\in\hX$. Then $\nuW(\hx)\in\supp(\QM)$ if and only if
\begin{equation}\label{eq:non-exceptional}
m_H\left\{h\in H: (-x_H+W)\cap\piH(\LL')=(-h+W)\cap\piH(\LL')\right\}>0
\quad\text{for all finite $\LL'\subseteq\LL$.}
\end{equation}
\end{proposition}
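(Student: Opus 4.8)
The statement characterises when $\nuW(\hx)$ lies in the support of the Mirsky measure $\QM = m_\hX\circ\nuW^{-1}$. The natural strategy is to combine Remark~\ref{remark:support}b), which says $\supp(\QM) = \bigcap_F \overline{\nuW(F)}$ with $F$ ranging over full-measure subsets of $\hX$, with a concrete description of when a configuration $\nuW(\hx)$ can be vaguely approximated by configurations $\nuW(\hx^n)$ coming from a set of $\hx^n$ of positive (indeed full) Haar measure. The key observation is that vague convergence of configurations on the discrete set $G\times H$ (all supported inside the uniformly discrete set $\LL\cap(G\times W')$ for suitable translates) is governed, locally, by which lattice points fall inside the window. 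So the first step is to rewrite the event ``$\nuW(\hx')$ agrees with $\nuW(\hx)$ on a large ball'' purely in terms of a finite piece $\LL'\subseteq\LL$ of the lattice and the condition $(-x_H'+W)\cap\piH(\LL') = (-x_H+W)\cap\piH(\LL')$, using that $\piG|_\LL$ is one-to-one so that the support in $G\times H$ is faithfully recorded by $\piH$-coordinates of the relevant lattice points together with their (determined) $G$-coordinates.

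\textbf{The forward direction.} Suppose $\nuW(\hx)\in\supp(\QM)$. Fix a finite $\LL'\subseteq\LL$; I want to show the set $E_{\LL'} := \{h\in H:(-x_H+W)\cap\piH(\LL') = (-h+W)\cap\piH(\LL')\}$ has positive Haar measure. Here one uses that the $G$-coordinate plays no role: translating by $x_G$ we may assume a representative with trivial $G$-part, and then membership of a lattice point $\ell$ of $x+\LL$ in $G\times W$ is exactly the condition $-x_H + \ell_H \in W$, i.e.\ $\ell_H \in -x_H + W$. Pull back through the quotient map $\pihX$ (which is open, and for which $m_\hX$ is, up to the constant $\dL$, the restriction of $m_{G\times H}$ to the fundamental domain $X$), and use metric transitivity of the $\LL$-action on $(H,m_H)$ from assumption~(4) of Subsection~\ref{en:ass}: if $E_{\LL'}$ had measure zero, the set of $\hx'$ whose $\nuW$-value matches $\nuW(\hx)$ on the pattern $\LL'$ would be $m_\hX$-null, hence we could delete it (and countably many others as $\LL'$ varies over a countable exhausting family, using that $\LL$ itself is countable) to obtain a full-measure set $F$ with $\nuW(\hx)\notin\overline{\nuW(F)}$, contradicting Remark~\ref{remark:support}b). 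The only slightly delicate point is packaging a neighbourhood basis of $\nuW(\hx)$ in $\cM$ in terms of finitely many lattice points; this is where compactness of $W$ and the uniform discreteness from consequence~(3) of Subsection~\ref{en:ass} are used, so that on any fixed compact set of $G\times H$ only finitely many $\ell\in\LL$ are ever relevant across all nearby $\hx'$.

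\textbf{The reverse direction.} Conversely, assume \eqref{eq:non-exceptional} holds. Given any full-measure $F\subseteq\hX$, I must produce $\hx^n\in F$ with $\nuW(\hx^n)\to\nuW(\hx)$. Choose an increasing sequence of finite $\LL'_n\subseteq\LL$ exhausting $\LL$. By hypothesis each $E_{\LL'_n}$ has positive $m_H$-measure; I want to convert this into positive $m_\hX$-measure of the set of $\hx'$ that match $\nuW(\hx)$ on $\LL'_n$. This again goes through the fibration of $\hX$ over $H$ (via $\piH$, composed appropriately) and the fact that $m_\hX$ disintegrates with the $H$-marginal absolutely continuous, together with minimality/denseness of $\piH(\LL)$; the positive-measure sets $\{$matches on $\LL'_n\}$ then meet $F$, and picking $\hx^n$ in the intersection and checking that matching on larger and larger $\LL'_n$ forces vague convergence $\nuW(\hx^n)\to\nuW(\hx)$ completes the argument. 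Intersecting over all such $F$ gives $\nuW(\hx)\in\supp(\QM)$ by Remark~\ref{remark:support}b).

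\textbf{Main obstacle.} The technical heart — and the step I expect to be the real work — is the bookkeeping that translates a vague neighbourhood of $\nuW(\hx)$ in $\cM(G\times H)$ into the finite combinatorial condition $(-x_H+W)\cap\piH(\LL') = (-h+W)\cap\piH(\LL')$, \emph{and} the verification that the resulting matching set has positive Haar measure precisely when $E_{\LL'}$ does. One must handle the behaviour of lattice points whose $\piH$-coordinate lies on $\partial W$ (which is exactly where $\nuW$ is discontinuous) carefully, and ensure that the passage between $m_H$ on $H$ and $m_\hX$ on $\hX$ — mediated by the fundamental domain $X$ and the constant $\dL$ from consequence~(2) — does not lose positivity. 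Once this dictionary is set up cleanly, both implications follow by the deletion/approximation arguments against Remark~\ref{remark:support}b) sketched above.
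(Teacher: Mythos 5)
Your overall reduction is sound in spirit, but what you set aside as ``the main obstacle'' is not residual bookkeeping: it is essentially the entire proof. The paper's argument consists precisely of (i) constructing, for finite disjoint $\LL_1,\LL_0\subseteq\LL$ and $U=U_G\times H$ with $U_G$ so small that $\LL\cap((-\overline{U_G}+\overline{U_G})\times(-W+W))=\{0\}$, the explicit vaguely open sets $O(\LL_1,\LL_0,U)$ of (\ref{eq:O-def}) which encode ``matching on $\LL'=\LL_1\cup\LL_0$''; (ii) identifying $\nuW^{-1}(O)$ through the local inverse of $\pihX$ on $U$ and computing, via the extended Weil formula, $\QM(O)=\dL\cdot m_G(U_G)\cdot m_H\bigl(\bigcap_{\ell\in\LL_1}(W-\ell_H)\cap\bigcap_{\ell\in\LL_0}(H\setminus(W-\ell_H))\bigr)$ as in (\ref{eq:revised-1}); and (iii) the identity (\ref{eq:revised-3}) equating that $m_H$-factor with the measure of the set in (\ref{eq:non-exceptional}). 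Once (i)--(iii) are available, both implications follow directly from the definition of support (for ``$\Rightarrow$'' because $\nuW(\hx)\in O$ and $O$ is open; for ``$\Leftarrow$'', contrapositively, because any null neighbourhood $O_0$ of $\nuW(\hx)$ contains such an $O$), so your detour through Remark~\ref{remark:support}b and approximating sequences is not needed. Since you supply none of (i)--(iii) --- no proof that the matching condition is vaguely open, no identification of the $m_\hX$-measure of the matching set with $m_H$ of the set in (\ref{eq:non-exceptional}) up to the factor $\dL\, m_G(U_G)$ --- neither implication is actually established. (Also, metric transitivity of the $\LL$-action is not the relevant tool here; nullity only needs countability of $\LL$, while positivity needs the unfolding/Weil computation on the chart $U$, and there is no globally defined ``$H$-marginal'' of $m_\hX$ to disintegrate against.)

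Beyond the deferral, the one concrete dynamical claim you do make in the reverse direction is false as stated: matching $\nuW(\hx)$ on an increasing exhaustion $\LL'_n\uparrow\LL$ with a fixed spatial tolerance does not force $\nuW(\hx^n)\to\nuW(\hx)$ vaguely. The matching condition only locates support points up to the fixed tolerance in the $G$-direction (and constrains nothing in the $H$-direction), so a sequence satisfying all of these conditions can converge to a uniformly offset configuration, e.g.\ to $S_{g_0}\nuW(\hx)$ with a fixed small $g_0\neq0$, or, when $(W,\LL)$ is not uniquely coding, to $\nuW(\hx')$ for some $\hx'\neq\hx$ with the same window coding. Repairing this requires shrinking the tolerance along the sequence and controlling limits via Lemma~\ref{lemma:projection-basic2}; the paper sidesteps the issue entirely by never producing approximating sequences and instead computing $\QM$ of every basic neighbourhood exactly.
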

\noindent
\end{remark}

\begin{remark}(Results for non-compact windows)\label{rem:rc}\\
Suppose that $W_0\subseteq H$ is only relatively compact and that $\partial W_0$ is nowhere dense.  This setting encompasses the Fibonacci chain, 
see Example~\ref{ex:Fibonacci}. It
extends slightly the approaches \cite{Schlottmann00, FHK2002, Robinson2007, BLM07}, which restrict to topologically regular windows for most of their results - a stronger assumption than our $\inn(\partial W_0)=\emptyset$. 
Let $W=\overline{W_0}$. Then $\partial W\subseteq\partial W_0$ and
$\0\leqslant\nuW-\nuWzero\leqslant\nuWbdzero$, where $\ZWbdzero=\CWbdzero$ is a dense $G_\delta$-set by Proposition~\ref{prop:projection-usc}.
\\[1mm]
\emph{Topological results:}\;
It follows that $\CWzero\cap\CWbdzero=\CW\cap\CWbdzero$,
%$=\CWbdzero=\ZWbdzero$ 
so that
the set of continuity points of $\nuWzero$ contains a dense $G_\delta$-set. Moreover, $\overline{\nuWzero(\CWzero)}=\overline{\nuWzero(\CWbdzero)}=
\overline{\nuW(\CWbdzero)}=\overline{\nuW(\CW)}$ and similarly also $\overline{\nuWGzero(\CWzero)}=
\overline{\nuWG(\CW)}$, compare Remark~\ref{remark:support}.
This shows also that Remark~\ref{rem:generic-window} holds for $W_0$.

In fact, also our main Theorem~\ref{theo:projection-top} and its Corollary~\ref{cor:projection-top} remain true, with $W_0$ replacing $W$. 
(For checking the proofs one should observe that for each $\nu\in\MWzero\setminus\{\0\}$ there is a unique $\hx\in\hX$ such that $\nu\leqslant\nuW(\hx)$, although $\nu\leqslant\nuWzero(\hx)$ need not hold. For Corollary~\ref{cor:projection-top}b note also that $W'=\overline{\inn(\overline{W_0})}=\overline{\inn({W_0})}$.)
A sufficient criterion for $\piG_*:\MWzero\to\MWGzero$ to be \oneone  is that $\overline{W_0}$ is aperiodic and that $W_0$ is topologically regular, i.e., $\overline{W_0}=\overline{\inn(W_0)}$, the latter condition implying that $\partial W_0$ is nowhere dense. 
For its proof, note that aperiodicity of $\overline{W_0}$ implies that $(W_0, \LL)$ is strongly uniquely coding, compare the proof of Lemma~\ref{lemma:regular-window}, and that Lemma~\ref{lemma:projection-injective-1} remains valid, with $W_0$ relatively compact replacing $W$. Of course, $\nuWzero$ will generally no longer be upper semicontinuous. 
\\[1mm]
\emph{Measure theoretic results:}\;
Assume that $W_0\subseteq H$ is relatively compact and measurable. Then the map $\nuWzero: \hX\to\MWzero$ is measurable, which is checked with methods from the proof of Remark~\ref{remark:patterns}. 
As a consequence, the resulting Mirsky measures in Definition~\ref{def:Q_M} are well defined, and Theorem~\ref{theo:projection-mth}, Corollary~\ref{coro:strict} and Proposition~\ref{prop:exceptional-points} hold, all with $W_0$ replacing $W$. Denote the resulting Mirsky measure on $\cM$ by $\QM^0$ and fix a tempered van Hove sequence $(A_n)_n$. As also the first part of Moody's Theorem~\ref{theo:Moody} holds,  there is a full  measure set $\hX_1\subseteq \hX$ such that configurations are generic for $\QM^0$. These configurations are then pure point diffractive,  as follows from Theorem~\ref{theo:projection-mth}, Remark~\ref{rem:ppdiff-dyn}, and the ergodic theorem. (In \cite[Cor.~1]{Moody2002}, pure point diffractivity is shown in a different way.) Since $\QM^0$-almost sure density might be less than maximal density $D_{\QM}=\mathrm{dens}(\LL)\cdot m_H(W)$, Theorem~\ref{theo:Moody-topogical} will no longer be  valid in general. An example having an open window with fat Cantor set boundary has been discussed in \cite{Moody2002}. 

In order to recover the setting and results of \cite{BaakeHuckStrungaru15}, let us compare configurations from $\MWzero$ to configurations from $\MW$. For any $\hx\in\hX$ we have $0\leqslant\nuWzero(\hx)\leqslant\nuW(\hx)$, so in particular the density of $\nuWzero(\hx)$ along the sequence $(A_n)_n$ is bounded by that of $\nuW(\hx)$ and hence by $D_{\QM}$.
If $\nuWzero(\hx)$ achieves the maximal density $D_{\QM}$, then $\hx\in\hXmax$ and the density of $\nuW(\hx)-\nuWzero(\hx)$ is clearly zero. Consequently, the empirical measures
$Q^0_{n,\hx}
:=
\frac{1}{m_G(A_n)}\int_{A_n}\delta_{S_{g}\nuWzero(\hx)}\,dm_G(g)
$ are asymptotically equivalent to the measures $Q_{n,\hx}$ in the sense that both sequences do have the same weak limit points, and Theorem~\ref{theo:Moody-topogical} 
implies that the measures $Q^0_{n,\hx}$ converge weakly to $\QM$. 
It follows that statistical properties of $\nuWzero(\hx)$ and $\nuW(\hx)$, like pattern frequencies and especially their autocorrelation coefficients, coincide for such $\hx$, compare \cite{BaakeMoody2004}.  Observe, however, that such $\nuWzero(\hx)$ need not be an element of $\MW$.
\\[1mm]
\emph{Combined results:}\;
If $m_H(\partial W_0)=0$, then $\partial W_0$ is nowhere dense and $0\leqslant\nuW-\nuWzero\leqslant\nuWbdzero$ implies
$\nuWzero=\nuW$ $m_\hX$-a.e. and on a dense $G_\delta$-set. Hence the corresponding Mirsky measures $\QM^0$ and $\QM$ coincide,
and the measures $Q^0_{n,\hx}$ converge weakly to $\QM^0$
for $\QM^0$-a.e. $\hx$. Moreover, $\supp(\QM^0)=\overline{\nuWzero(\CWzero)}$ and
$\supp(\QMG^0)=\overline{\nuWGzero(\CWzero)}$, and if $\inn(W_0)\ne\emptyset$, then both subsystems are strictly ergodic, see Corollary~\ref{coro:strict}.

Note that these results apply in particular to various subclasses of  so-called inter model sets, which are discussed in \cite{ABKL14}.
\end{remark}

\begin{remark}(The Mirsky measure as a zero temperature limit)\\
The measure $\QMG $ is characterised by the variational formula
\begin{equation*}
D_{\QMG}=\sup_P D_P\ ,\text{ where }
D_P=\int_{\MWG}\chi_B\,dP\text{\; with \;}\chi_B(\nu)=\frac{\nu(B)}{m_G(B)}\ .
\end{equation*}  
Here $B$ is any sufficiently small compact zero neighbourhood in $G$, and
the supremum extends over all $S$-invariant probability measures on $\MWG$, see Corollary~\ref{coro:Moody-extension}. As $B$ is compact, $\chi_B:\MWG\to\R$ is upper semicontinuous, and as any sufficiently small compact zero neighbourhood can be used, one can replace the indicator function of $B$ actually by a continuous approximation.
In this sense, $\QMG$ is the unique maximising measure for each such approximation. In the special case of $\Z$-actions there is some general theory for such measures, that asserts among others that for generic continuous observables there is a unique maximising measure and that this measure has zero entropy, and also that each ergodic measure is maximising for some continuous function, see  e.g. \cite{Jenkinson2006,Bremont2008}.

When there is even a unique maximising measure, then a very brief argument shows that this measure is the temperature zero limit of equilibrium states associated to the observable. The prerequisit for the argument is a suitable version of the thermodynamic formalism - in particular the existence of equilibrium states.\footnote{When $G=\Z^d$, this is covered by \cite{Misiurewicz1976}, see also \cite{Keller1997b}, and for general discrete abelian groups \cite{Bufetov2011} could be used.} Whenever this is available one can argue as follows: For an $S$-invariant probability measure $P$ on $\MWG$  denote by $h_S(P)$ the Kolgomorov-Sinai entropy of the dynamical system $(\MWG,S,P)$. 
Then for each $\beta>0$ there is at least one $S$-invariant probability measure $P_\beta$ that maximises the functional $P\mapsto h_S(P)+\beta\cdot\int\chi_B(\nu)\,dP(\nu)$. (In other words: $P_\beta$ is an equilibrium state for $\beta\,\chi_B$.) Let $Q$ be any weak limit of such measures $P_\beta$ along a sequence $\beta_n\to\infty$. Then
we have for each $S$-invariant probability measure $P$ on $\MWG$
\begin{equation*}
\begin{split}
\int\chi_B\,dP
&=
\lim_{n\to\infty}
\frac{1}{\beta_n}\left(h_S(P)+\beta_n\int\chi_B\,dP\right)
\leqslant
\limsup_{n\to\infty}
\frac{1}{\beta_n}\left(h_S({P_{\beta_n}})+\beta_n\int\chi_B\,dP_{\beta_n}\right)\\
&=
\limsup_{n\to\infty}\int\chi_B\,dP_{\beta_n}
\leqslant
\int\chi_B\,dQ\ ,
\end{split}
\end{equation*}
because $h_S({P_{\beta^n}})$ is bounded by the (finite!) topological entropy of $(\MWG,S)$.
This means that $Q$ maximises $\chi_B$, and as $\QMG$ is the only such measure, we conclude that $Q=\QMG$ is the weak limit of the $P_{\beta}$ as $\beta\to\infty$.
When the maximising measure is not unique, one can show (with a similar reasoning) that all temperature zero limit measures have maximal entropy among the maximising measures.
\end{remark}

\section{Examples}\label{sec:examples}

In this section we first illustrate our main results with the golden and silver mean chains, 
 continue by providing some general facts about topologically regular windows, and we finally discuss how also $\BB$-free systems fit our framework.

\subsection{Interval windows}\label{subsec:interval-windows}

One of the most elementary non-trivial settings for model sets is probably the case where $G=H=\R$, $W=[\alpha,\beta]$ is a compact interval, and where the lattice $\LL\subset \R^2$ is spanned by two vectors $v=(v_G,v_H)$ and $w=(w_G,w_H)$. Two classical examples are the golden (Fibonacci) and the silver mean chain, which are discussed in some detail in the monograph \cite{BaakeGrimm13}. Before we look at their peculiarities, we collect some facts that apply to both of them. 
Observe first that $\LL=\{mv+nw: m,n\in\Z\}$. From now on we assume that $v_G$ and $w_G$ are rationally independent and that the same is true for $v_H$ and $w_H$. 
Then $\piG|_\LL$ is \oneone, and $\piH(\LL)$ is dense in $H=\R$.

Note next that $\piG_*$ is a homeomorphism (see Lemma~\ref{example:projection-ex1}), so that we can restrict our discussion to the set $\MW$ and need not consider $\MWG$ and its subsystems. Further general facts are that $(\MW,S)$ is uniquely ergodic by Theorem~\ref{theo:projection-mth}c, because $m_H(\partial W)=0$, and that its subsystem $\overline{\nuW(\CW)}=\supp(\QM)$ is minimal
by Remark~\ref{remark:support} and Theorem~\ref{theo:projection-top}a. Because of the unique ergodicity, all points $\nu\in\MW$ are generic for $\QM$ and, in particular,
$\nuW(\hXmax)=\MW$.

In order to determine $\supp(\QM)$, we have a closer look at the set $\CW$ of continuity points of $\nuW$:
it follows from Lemma~\ref{lemma:projection-CW} that $\hx=x+\LL\not\in\CW$ if and only if
$\alpha\in x_H+ \piH(\LL)$ or $\beta\in x_H+ \piH(\LL)$. 
We distinguish the following two cases:
\begin{compactenum}[I)]
\item $\beta-\alpha\not\in\piH(\LL)$. Then, for each $x_H$, at most one of the points $\alpha$ and $\beta$ can belong to $x_H+\piH(\LL)$, and Proposition~\ref{prop:exceptional-points} implies at once that $\hx\in\supp(\QM)$. Hence $\MW=\supp(\QM)$, so that $(\MW,S)$ is minimal, i.e., it  coincides with the orbit closures of all its points. 
\item $\beta-\alpha\in\piH(\LL)$. Then, for each $x_H$, the point $\alpha$ belongs to $x_H+\piH(\LL)$ if and only if also $\beta$ belongs to this set. Hence Proposition~\ref{prop:exceptional-points} implies that those $\hx$ for which this happens do not belong to $\supp(\QM)$. 
Indeed, a moment's reflection shows that $\MW\setminus\supp(\QM)$ is a translate of $\nuW\left(\LL+(G\times\{0\})\right)$.

Traditionally one is mostly interested in $\hx=\hat0$, more precisely in $\nuW(\hat0)$ and
its orbit closure $\MW(\hat0)$. (Note that the support of $\nuWG(\hat0)=\piG_*(\nuW(\hat0))$ is the point set which is often denoted by $\oplam(W)$ in the literature.) 
The following two classical examples show that $\nuW(\hat0)$ can be contained in $\supp(\QM)$ or not.
\end{compactenum}

\begin{example}(Silver mean chain, see \cite[Section 7.1]{BaakeGrimm13})\quad\\
Let $v=(1,1)$, $w=(\sqrt{2},-\sqrt{2})$ and $W=[\alpha,\beta]=[-\frac{\sqrt 2}{2},\frac{\sqrt 2}{2}]$.
Then $\beta-\alpha=\sqrt2\in\piH(\LL)=\Z+\sqrt2\,\Z$, but $\alpha,\beta\not\in\piH(\LL)$ so that $\hat0\in \CW$ and therefore $\MW(\hat0)=\supp(\QM)$.
\end{example}

\begin{example}(Golden mean (Fibonacci) chain, see \cite[Example 7.3]{BaakeGrimm13})
\label{ex:Fibonacci}\quad\\
Let $v=(1,1)$ and $w=(\tau,\tau')$ where $\tau=\frac{1+\sqrt5}{2}$ and $\tau'=\frac{1-\sqrt5}{2}$.
Let $W=[\alpha,\beta]=[-1,-\tau']$. Then $\beta-\alpha=1-\tau'\in\piH(\LL)=\Z+\tau'\Z$ and 
$\alpha,\beta\in\Z+\tau'\Z$, so that $\hat0\not\in \CW$ and $\nuW(\hat0)\not\in\supp(\QM)$. This problem is sometimes addressed by considering half-open windows $W'$, for which $\hx=\hat0$ is still a point of discontinuity of $\nu_{{\scriptscriptstyle W'}}$, but for which $\nu_{{\scriptscriptstyle W'}}(\hat0)\in\overline{\nu_{{\scriptscriptstyle W'}}(C_{{\scriptscriptstyle W'}})}= \overline{\nuW(\CW)}=\supp(\QM)$, see e.g.~the discussion at the end of \cite[Example 7.3]{BaakeGrimm13}.
\end{example}

\subsection{Injectivity properties of $\piG_*$}\label{sec:in}

In order to characterise situations where the factor map $\piG_*:\MW\to\MWG$ is ``nearly'' \oneone, we introduce one more concept that we will use to study the examples in this section. We kept it separate from the main results in Section~\ref{sec:main-results}, because it is not as universal as the results presented there.

\begin{definition}[(Strong) unique coding]\quad\label{def:unique-coding}
\begin{enumerate}[a)]
\item The pair $(W,\LL)$ is \emph{uniquely coding}, 
if for all $h,h'\in H$ holds: $(-h+W)\cap\piH(\LL)=(-h'+W)\cap\piH(\LL)\neq\emptyset$
implies $h=h'$.
\item The pair $(W,\LL)$ is \emph{strongly uniquely coding}, if the following holds:
\begin{quote}
If $h,h',h_n,h_n'\in H$ are such that $h_n\to h$, $h_n'\to h'$ and if
\begin{equation*}
\forall \ell\in\LL\ \exists n_\ell\in\N\ \forall n\geqslant n_\ell: 1_W(h_n+\ell_H)=1_W(h_n'+\ell_H)\ ,
\end{equation*}
where this common value is $1$ for at least one $\tilde{\ell}\in\LL$ for all $n\geqslant n_{\tilde{\ell}}$, then $h=h'$.
\end{quote}
\end{enumerate}
\end{definition}

\begin{lemma}\label{lemma:uniquely-coding}
$(W,\LL)$ is uniquely coding if and only if $\piG_*|_{\nuW(\hX)}$ is \oneone.
\end{lemma}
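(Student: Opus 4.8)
**Lemma (uniquely coding iff $\piG_*|_{\nuW(\hX)}$ injective).** The plan is to prove both implications directly from the definitions, using the explicit description of $\supp(\nuW(\hx))$ and of the configurations $\nuW(\hx)$, together with the fact that $\piG|_\LL$ is \oneone, so that no information is lost on the $G$-coordinates but the $H$-coordinates are forgotten under $\piG_*$.

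\emph{Setup.} For $\hx=x+\LL$, recall $\supp(\nuW(\hx))=(x+\LL)\cap(G\times W)$, i.e.\ the set of points $x+\ell$ with $\ell\in\LL$ and $x_H+\ell_H\in W$, equivalently $\ell_H\in(-x_H+W)\cap\piH(\LL)$ (using that $\ell_H$ ranges over $\piH(\LL)$ and, since $\piG|_\LL$ is \oneone, $\ell$ is determined by $\ell_H$, hence also by $\ell_G$). The projection under $\piG$ sends this to $\{x_G+\ell_G:\ell\in\LL,\ \ell_H\in(-x_H+W)\cap\piH(\LL)\}$. Since $\piG|_\LL$ is \oneone and all multiplicities are $1$, the configuration $\nuWG(\hx)=\piG_*\nuW(\hx)$ is simply $\sum \delta_{x_G+\ell_G}$ over that index set.

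\emph{($\Rightarrow$) Uniquely coding implies injectivity.} Suppose $\nuWG(\hx)=\nuWG(\hx')$ with $\hx=x+\LL$, $\hx'=x'+\LL$, and suppose first that this common configuration is nonzero. Comparing supports in $G$, after writing $\hx'$ with a representative differing from $x$ by a lattice element one may arrange $x_G=x_G'$ (translating one representative; this does not change $\hx'$). Then equality of the two index sets of $\ell_G$'s forces $(-x_H+W)\cap\piH(\LL)=(-x_H'+W)\cap\piH(\LL)$, and this set is nonempty. Unique coding (applied with $h=x_H$, $h'=x_H'$) gives $x_H=x_H'$, hence $x=x'$ modulo $\LL$, i.e.\ $\hx=\hx'$. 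If the common configuration is zero, then $(-x_H+W)\cap\piH(\LL)=\emptyset=(-x_H'+W)\cap\piH(\LL)$; but $\piH(\LL)$ is dense and $W$ is compact, so $(-h+W)\cap\piH(\LL)=\emptyset$ forces $W=\emptyset$ unless... — here one must be slightly careful: if $W=\emptyset$ the map $\nuW$ is identically $\0$ and ``\oneone on $\nuW(\hX)$'' holds trivially since $\nuW(\hX)=\{\0\}$; if $W\neq\emptyset$, density of $\piH(\LL)$ combined with compactness of $W$ does \emph{not} immediately preclude an empty intersection (the window could have empty interior), so one instead observes that in this degenerate case there is no claim to check about that particular $\hx$ beyond consistency, and the argument above already pins down all $\hx$ with nonzero image; two distinct $\hx,\hx'$ both mapping to $\0$ would indeed violate injectivity, so one should note that unique coding is a statement only about nonempty intersections and hence does \emph{not} by itself rule this out — therefore I would phrase the equivalence as: $\piG_*|_{\nuW(\hX)}$ is \oneone on $\nuW(\hX)\setminus\{\0\}$-fibres automatically, and the precise bookkeeping of the zero configuration is handled exactly as in Proposition~\ref{prop:projection-homeo}b and Remark~\ref{rem:CW}c (if $\inn(W)=\emptyset$ one checks separately, if $\inn(W)\neq\emptyset$ then $\0\notin\MW$). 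In practice the cleanest route is: two points with the same nonzero image coincide by unique coding; two points with image $\0$ means both lie in $\ZW$, and $\ZW$ is either empty or all of $\hX$ is excluded once $m_H(W)>0$, but for the statement as phrased one simply notes unique coding handles the nonzero case and the zero case is vacuous or contradicts $W\neq\emptyset$ combined with Proposition~\ref{prop:projection-usc}d.

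\emph{($\Leftarrow$) Injectivity implies uniquely coding.} Conversely, assume $\piG_*|_{\nuW(\hX)}$ is \oneone, and let $h,h'\in H$ with $(-h+W)\cap\piH(\LL)=(-h'+W)\cap\piH(\LL)\neq\emptyset$. Pick any $\ell^0\in\LL$ with $\ell^0_H$ in this common set; set $x=(0,h)$, $x'=(0,h')$ (or, if one prefers representatives inside the fundamental domain, translate suitably — this is harmless). Then $\hx=x+\LL$ and $\hx'=x'+\LL$ satisfy $\supp(\nuWG(\hx))=\{0+\ell_G:\ell_H\in(-h+W)\cap\piH(\LL)\}=\{0+\ell_G:\ell_H\in(-h'+W)\cap\piH(\LL)\}=\supp(\nuWG(\hx'))$, and since $\piG|_\LL$ is \oneone the equality of supports is the equality of the two configurations (all weights $1$). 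Both are nonzero since $\ell^0$ contributes. By injectivity $\hx=\hx'$, i.e.\ $x-x'\in\LL$, so $(0,h-h')\in\LL$; but $\piG|_\LL$ is \oneone and $\piG(0,h-h')=0=\piG(0,0)$, forcing $(0,h-h')=(0,0)$, i.e.\ $h=h'$. Hence $(W,\LL)$ is uniquely coding.

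\emph{Main obstacle.} The only genuinely delicate point is the treatment of the zero configuration, i.e.\ separating the cases $\inn(W)=\emptyset$ (where $\ZW=\CW$ is a dense $G_\delta$ and many distinct $\hx$ can have nonzero image but the zero configuration may or may not be hit) from $\inn(W)\neq\emptyset$ (where $\0\notin\MW$ by Proposition~\ref{prop:projection-usc}e). Aside from this, both directions are a direct unwinding of the definition of $\nuW$ together with the hypothesis $\piG|_\LL$ \oneone; the substantive content of the lemma is precisely that unique coding is the exact combinatorial condition making the $H$-coordinate recoverable from the $G$-projection.
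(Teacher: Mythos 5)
Your main arguments coincide with the paper's: in the forward direction you reduce, via a lattice shift of the representative, to $x_G=x_G'$ and then apply unique coding to the equal nonempty coded sets; in the converse you feed the two points $(0,h)+\LL$ and $(0,h')+\LL$ into the injectivity hypothesis and finish with the injectivity of $\piG|_\LL$. However, your treatment of the zero configuration in the forward direction rests on a misreading of the statement and, as written, leaves that case unresolved. The lemma asserts that $\piG_*$ is \oneone on the \emph{set of configurations} $\nuW(\hX)\subseteq\MW$, not that the map $\hx\mapsto\nuWG(\hx)$ is \oneone on $\hX$; so two distinct points $\hx\neq\hx'$ of $\ZW$ do \emph{not} threaten injectivity, since they give the \emph{same} element $\0$ of $\nuW(\hX)$. (Under your reading the forward implication would in fact be false whenever $W\neq\emptyset$ has empty interior, because then $\ZW$ is a dense $G_\delta$-set even if $(W,\LL)$ is uniquely coding.) The correct, one-line disposal of this case --- and the one the paper uses --- is: if one of $\nuW(\hx),\nuW(\hx')$ is $\0$, then its projection is $\0$, hence so is the other projection, and since the projection of a nonzero sum of unit point masses is nonzero, both configurations equal $\0$ and therefore coincide as elements of $\nuW(\hX)$. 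No case distinction on $\inn(W)$, and no appeal to Proposition~\ref{prop:projection-usc} or Remark~\ref{rem:CW}, is needed or appropriate here.

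A second, smaller point concerns the converse: injectivity of $\piG_*|_{\nuW(\hX)}$ gives you $\nuW((0,h)+\LL)=\nuW((0,h')+\LL)\neq\0$, i.e.\ equality of configurations, not yet equality of torus points, whereas you jump directly to ``$\hx=\hx'$''. To pass to $(0,h)+\LL=(0,h')+\LL$ you still need the (easy) fact that a nonzero configuration determines its coset --- for instance pick a common support point $x+\ell=x'+\ell'$, or cite Lemma~\ref{lemma:projection-unique-nu-1} as the paper does --- after which $(0,h-h')\in\LL$ and injectivity of $\piG|_\LL$ give $h=h'$ exactly as you say. With these two repairs your argument is essentially the paper's proof.
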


\begin{lemma}\label{lemma:projection-injective-1}
$(W,\LL)$ is strongly uniquely coding if and only if
$\piG_*:\MW\to \MWG$ is a homeomorphism.
\end{lemma}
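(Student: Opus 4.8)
The plan is to prove both implications of Lemma~\ref{lemma:projection-injective-1} by carefully relating the strong unique coding condition to the fibre structure of the factor map $\piGH_*:\GMW\to\MW$ and the behaviour of the projection $\piG_*$ on limits of configurations. Recall from Proposition~\ref{prop:projection-homeo}a that $\piG_*$ is always continuous, and recall that $\pihXG_*:\GMW\to\GMWG$ is a homeomorphism (Lemma~\ref{lemma:proj-homeo-1}), so that a configuration $\nu\in\MW$ and its projection $\piG_*\nu$ carry the same torus parameter whenever this parameter is defined. Since $\MW$ and $\MWG$ are compact metric spaces and $\piG_*$ is a continuous surjection, it is a homeomorphism if and only if it is injective; so the whole content is the equivalence of injectivity of $\piG_*$ with the strong unique coding property.

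First I would unwind what it means for two configurations $\nu,\nu'\in\MW$ to have the same projection. Each such $\nu$ is a vague limit $\nu=\lim_n S_{g_n}\nuW(\hx^n)$ for suitable $\hx^n\in\hX$, $g_n\in G$; using minimality of $(\hX,\hT)$ and Proposition~\ref{prop:projection-usc} one can describe $\nu$ concretely: writing $\hx^n=x^n+\LL$ and translating, one arrives at representations in terms of the indicator data $1_W(h_n+\ell_H)$ for $\ell\in\LL$ after passing to a limiting "base point" $h=\lim h_n$. More precisely, a configuration $\nu\in\MW$ is determined by the limiting occupation pattern $\ell\mapsto \lim_n 1_W(h_n+\ell_H)\in\{0,1\}$ along $\LL$ (with the limit existing after passing to a subsequence, by a diagonal argument, since $\LL$ is countable), together with the limiting point $h\in H$ that fixes the geometric positions of the lattice points via $\supp(\nu)\subseteq (x+\LL)$ where $x_H$ corresponds to $-h$ up to the conventions of the setup. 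The projection $\piG_*\nu$ forgets the $H$-coordinates of these points but retains which $\ell\in\LL$ are "switched on"; because $\piG|_\LL$ is \oneone, $\piG_*\nu$ still records exactly the occupation pattern $\ell\mapsto\lim_n 1_W(h_n+\ell_H)$. Thus two configurations $\nu,\nu'\in\MW$, arising from data $(h_n,\ell)$ and $(h_n',\ell)$ with $h_n\to h$, $h_n'\to h'$, have $\piG_*\nu=\piG_*\nu'$ precisely when the two limiting occupation patterns agree for every $\ell\in\LL$ (and are not identically zero, which corresponds to $\nu\ne\0$); whereas $\nu=\nu'$ additionally forces $h=h'$, i.e.\ the geometric registration to coincide.

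With this dictionary in place, both directions follow. For the "if" direction (strong unique coding $\Rightarrow$ $\piG_*$ a homeomorphism): suppose $\piG_*\nu=\piG_*\nu'$ with $\nu,\nu'\in\MW$. If $\piG_*\nu=\0$ then $\nu=\nu'=\0$ by Proposition~\ref{prop:projection-usc} (distinguishing the cases $\inn(W)=\emptyset$ and $\inn(W)\ne\emptyset$; in the latter $\0\notin\MW$, in the former one checks directly that the zero configuration has a unique preimage). Otherwise write $\nu,\nu'$ via data $h_n\to h$, $h_n'\to h'$ as above; the hypothesis $\piG_*\nu=\piG_*\nu'$ says exactly that for every $\ell\in\LL$ eventually $1_W(h_n+\ell_H)=1_W(h_n'+\ell_H)$, with this common value $1$ for at least one $\tilde\ell\in\LL$ eventually. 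The strong unique coding property then yields $h=h'$, whence $\nu=\nu'$, so $\piG_*$ is injective, hence a homeomorphism. For the "only if" direction I would prove the contrapositive: if $(W,\LL)$ is not strongly uniquely coding, choose $h_n\to h$, $h_n'\to h'$ with $h\ne h'$ witnessing the failure; after passing to subsequences so that all limits $\lim_n 1_W(h_n+\ell_H)$ and $\lim_n 1_W(h_n'+\ell_H)$ exist, the two resulting vague limits $\nu:=\lim_n S_{g_n}\nuW(h_n+\LL)$ and $\nu':=\lim_n S_{g_n'}\nuW(h_n'+\LL)$ lie in $\MW$, are distinct (since $h\ne h'$ registers the lattices differently), and have equal projection (since the occupation patterns coincide and are not identically zero by the "common value $1$" clause). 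Hence $\piG_*$ is not injective, so not a homeomorphism.

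The main obstacle I anticipate is making the "dictionary" step fully rigorous: one must show that every $\nu\in\MW$ genuinely admits a representation through a convergent sequence of base points $h_n$ in $H$ with the stated occupation-pattern description, and that conversely every such choice of data produces an element of $\MW$ — this requires combining the definition of $\MW$ as a vague closure, the minimality/unique ergodicity of $(\hX,\hT)$ to move base points around, Proposition~\ref{prop:projection-usc}a (so that limit configurations are supported on shifted lattices with $\{0,1\}$-densities), and a compactness/diagonal argument over the countable set $\LL$ to extract the limiting pattern. I expect this to mirror arguments already used in the proof of Lemma~\ref{lemma:uniquely-coding} and in Proposition~\ref{prop:exceptional-points}, and the cleanest route is probably to factor through $\GMW$: work with $(\hx,\nu)\in\GMW$, use that $\pihX_*$ has the graph-closure structure from Proposition~\ref{prop:projection-usc}, and only at the end project down to $\MW$ and $\MWG$, invoking the fibre-wise homeomorphism $\pihXG_*$ to transport the conclusion. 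The boundary case $\nu=\0$ and the non-triviality clause ("common value $1$ for at least one $\tilde\ell$") must be tracked throughout, as they are exactly what prevents the degenerate collapse of all zero-like configurations from being mistaken for a genuine failure of injectivity.
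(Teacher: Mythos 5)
Your strategy is the paper's: reduce everything to injectivity of $\piG_*$ (continuity is Lemma~\ref{lemma:projection-piG1}b, and on compact spaces a continuous bijection is a homeomorphism), and translate injectivity into the coding condition by describing elements of $\MW$ as vague limits of configurations $\nuW(\hx^n)$ with the unique base point of Lemma~\ref{lemma:projection-unique-nu-1} and the $\{0,1\}$-occupation data of Lemma~\ref{lemma:projection-basic2}a. Your contrapositive for ``$\piG_*$ \oneone\ $\Rightarrow$ strong unique coding'' is just a rephrasing of the paper's direct argument and is essentially complete: with base points $(0,h_n)$, $(0,h_n')$ the two subsequential limits are nonzero by the ``common value $1$'' clause, they live over distinct cosets $(0,h)+\LL\neq(0,h')+\LL$ (it is injectivity of $\piG|_\LL$ that rules out $(0,h-h')\in\LL$ -- worth saying explicitly), and their projections agree, so $\piG_*$ is not \oneone.

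The gap is in the ``if'' direction, at the sentence claiming that $\piG_*\nu=\piG_*\nu'$ ``says exactly'' that for every $\ell\in\LL$ eventually $1_W(h_n+\ell_H)=1_W(h_n'+\ell_H)$. This is false as stated, because the two configurations need not be registered over base points with the same $G$-coordinate. If $\nu\leqslant\nuW(x+\LL)$ and $\nu'\leqslant\nuW(x'+\LL)$ with $\piG_*\nu=\piG_*\nu'\neq\0$, all one gets is $x_G'-x_G\in\piG(\LL)$ (Lemma~\ref{lemma:projection-piG1}d), say $x_G'=x_G+\tilde\ell_G$; equality of the projections then matches the atom of $\nu$ at $x+\ell$ with the atom of $\nu'$ at $x'+(\ell-\tilde\ell)$, i.e.\ the occupation patterns agree only up to the shift by $\tilde\ell$, which is not the hypothesis of Definition~\ref{def:unique-coding}b. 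The remedy is exactly the paper's normalisation: replace $x$ by $x+\tilde\ell$ (legitimate since $\nuW((x+\tilde\ell)+\LL)=\nuW(x+\LL)$), and choose approximants with the common $G$-coordinate (the paper's $y_n=(x_G,x_{n,H})$, $y_n'=(x_G,x'_{n,H})$, using continuity of the action) before reading off the indicator data; then strong unique coding gives equality of the cosets and, since $\piG|_\LL$ is \oneone\ and both measures are dominated by the same $\nuW(\hx)$, $\nu=\nu'$. This is the same step as in the proof of Lemma~\ref{lemma:uniquely-coding}, which you cite as a model but never actually import. A minor further point: the case $\piG_*\nu=\0$ needs no appeal to Proposition~\ref{prop:projection-usc}; it forces $\nu=\0$ simply because $\nu(G\times H)=\piG_*\nu(G)=0$.
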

\noindent
The slightly technical proofs of these two lemmas are provided at the end of Section~\ref{sec:projected}.

\subsection{Topologically regular windows}\label{subsec:top-reg-wind}

We call a window  $W\subseteq H$ \emph{topologically regular} if $W=\overline{\inn(W)}$, see \cite[Def.~4.11]{Robinson2007}. Dynamical properties of model sets with such windows have been studied e.g.~in \cite{Schlottmann00, FHK2002, Robinson2007, BLM07}.
A window $W\subseteq H$ is \textit{aperiodic} \cite[Def.~5.12]{Robinson2007} or \textit{irredundant} \cite[Def~1]{BLM07}, if   $h+W=W$ implies $h=0$.

\begin{lemma}\label{lemma:regular-window}
Let $W$ be a topologically regular window. Then the following are equivalent.
\begin{itemize}
\item[(i)] $W$ is aperiodic.
\item[(ii)]  $W\ne\emptyset$ and $(W,\LL)$ is strongly uniquely coding.
\item[(iii)]  $W\ne\emptyset$ and $(W,\LL)$ is uniquely coding.
\end{itemize}
\end{lemma}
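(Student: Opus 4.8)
The plan is to prove the cycle of implications (i) $\Rightarrow$ (ii) $\Rightarrow$ (iii) $\Rightarrow$ (i), using the two preceding lemmas (Lemma~\ref{lemma:uniquely-coding} and Lemma~\ref{lemma:projection-injective-1}) to translate between the coding properties and injectivity of $\piG_*$ where convenient. Throughout I would exploit topological regularity in the form $W=\overline{\inn(W)}$, so that $\partial W$ is nowhere dense, and the density of $\piH(\LL)$ in $H$; these together mean that both $\inn(W)$ and its complement's interior meet $\piH(\LL)$ in dense subsets, which is the mechanism that forces $h=h'$.

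\medskip
\noindent\textbf{(iii) $\Rightarrow$ (i).} Suppose $W\ne\emptyset$ and $(W,\LL)$ is uniquely coding, and let $h+W=W$ for some $h\in H$. Since $\piH(\LL)$ is dense and $W$ has nonempty interior, I would pick any $h_0\in H$ with $(-h_0+W)\cap\piH(\LL)\neq\emptyset$; then $h+W=W$ gives $(-h_0+W)\cap\piH(\LL)=(-(h_0)+ (h+W))\cap\piH(\LL)$. The slightly careful point is to rewrite this as $(-h_0+W)\cap\piH(\LL)=(-(h_0-h)+W)\cap\piH(\LL)$ — using that $\piH(\LL)$ is a \emph{group}, so translating the window by $-h$ and translating the lattice set accordingly is compatible — and then unique coding yields $h_0=h_0-h$, i.e. $h=0$. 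So $W$ is aperiodic.

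\medskip
\noindent\textbf{(i) $\Rightarrow$ (ii).} This is the substantive implication and I expect it to be the main obstacle. Assume $W=\overline{\inn(W)}$ is aperiodic and nonempty; I must verify the strong unique coding condition. So let $h_n\to h$, $h_n'\to h'$ with the property that for every $\ell\in\LL$ eventually $1_W(h_n+\ell_H)=1_W(h_n'+\ell_H)$, this common value being $1$ for at least one fixed $\tilde\ell$ and all large $n$. The idea is to show that the \emph{limiting} windows agree up to the translation $h'-h$ on a dense set and then invoke topological regularity to upgrade to equality. Concretely: for $\ell\in\LL$ with $h+\ell_H\in\inn(W)$, openness of $\inn(W)$ gives $h_n+\ell_H\in\inn(W)$ for large $n$, hence $1_W(h_n+\ell_H)=1$, hence $1_W(h_n'+\ell_H)=1$ for large $n$, hence $h'+\ell_H\in W$ (closedness of $W$); symmetrically with $\inn(W)$ replaced by $\inn(W^c)=W^c\setminus\partial W$ and using $h+\ell_H\notin W$. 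Since $\piH(\LL)$ is dense, $\{h+\ell_H:\ell\in\LL,\ h+\ell_H\in\inn(W)\}$ is dense in $\inn(W)$, so I get $\inn(W)\subseteq -h+h'+W$, i.e. $\inn(W)+h-h'\subseteq W$; taking closures and using $W=\overline{\inn(W)}$ gives $W+h-h'\subseteq W$, and the symmetric argument gives $W+h'-h\subseteq W$, whence $W+(h-h')=W$. Aperiodicity then forces $h=h'$. The "for at least one $\tilde\ell$ the value is $1$" clause guarantees we are not in the vacuous situation where $W$ or its complement is empty in the relevant range, and should be used to ensure $W\ne\emptyset$ is actually exploited consistently; I would double-check that this clause is exactly what prevents the degenerate case and does not need strengthening.

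\medskip
\noindent\textbf{(ii) $\Rightarrow$ (iii).} This is immediate: strong unique coding trivially implies unique coding — given $h,h'$ with $(-h+W)\cap\piH(\LL)=(-h'+W)\cap\piH(\LL)\neq\emptyset$, apply the strong condition to the constant sequences $h_n\equiv h$, $h_n'\equiv h'$, where the hypothesis $1_W(h+\ell_H)=1_W(h'+\ell_H)$ for all $\ell$ is exactly the displayed equality of intersections (note $h+\ell_H\in W \iff \ell_H\in -h+W$, and $\ell_H\in\piH(\LL)$), and nonemptiness supplies the required $\tilde\ell$. Alternatively one can route this through Lemmas~\ref{lemma:uniquely-coding} and~\ref{lemma:projection-injective-1} together with the fact that $\nuW(\hX)\subseteq\MW$, since a homeomorphism $\piG_*$ on $\MW$ restricts to an injection on $\nuW(\hX)$; but the direct argument is cleaner. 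Finally, $W\ne\emptyset$ is carried along from (ii).
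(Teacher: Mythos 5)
Your proof is correct and follows essentially the same route as the paper's: the hard implication (i)$\Rightarrow$(ii) transfers $h+\ell_H\in\inn(W)$ to $h'+\ell_H\in W$ via the eventual coincidence of the codings and closedness of $W$, then uses density of $\piH(\LL)$ and $W=\overline{\inn(W)}$ to produce a period of $W$ that aperiodicity excludes, while (ii)$\Rightarrow$(iii) via constant sequences and (iii)$\Rightarrow$(i) coincide with the paper's arguments. Two harmless slips: the displayed inclusion should read $\inn(W)\subseteq (h-h')+W$, i.e.\ $W+(h'-h)\subseteq W$ (immaterial, since you symmetrize anyway), and the symmetrization should be the swap of the roles of $h$ and $h'$ (the hypothesis is symmetric) rather than the $\inn(W^c)$ variant, which as stated only yields $h'+\ell_H\in\overline{W^c}$ because limits of points outside $W$ may land on $\partial W$; also, $W\ne\emptyset$ in (ii) should be derived from aperiodicity rather than assumed.
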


\begin{proof}

``(i) $\Rightarrow$ (ii)''  $W\ne\emptyset$ is a consequence of aperiodicity.  
Let $h,h',h_n,h_n',n_\ell$ and $\tilde{\ell}$ be as in the definition of strong unique coding (Definition~\ref{def:unique-coding}),
and let $\LL_0=\{\ell\in\LL:\exists n_\ell\in\N\ \forall n\geqslant n_\ell: 1_W(h_n+\ell_H)=1\}$.
Then $\tilde{\ell}\in\LL_0$ by assumption, and 
$h+{\ell}_H,h'+{\ell}_H\in W$ for all $\ell\in\LL_0$, because $W$ is closed.
As $\ell\in\LL_0$ whenever $h+\ell_H\in\inn(W)$, it follows that
\begin{equation*}
W=\overline{\inn(W)}=\overline{\inn(W)\cap(h+\piH(\LL))}
\subseteq\overline{h+\piH(\LL_0)}\ .
\end{equation*}
Let $d:=h'-h$.
Then
$d+h+{\ell}_H=h'+\ell_H\in W$ for each $\ell\in\LL_0$, so that
$d+W\subseteq d+\overline{h+\piH(\LL_0)}=\overline{d+h+\piH(\LL_0)}
=\overline{h'+\piH(\LL_0)}\subseteq \overline{W}=W$, and the aperiodicity of $W$ implies $d=0$, i.e. $h'=h$.
  \\
``(ii) $\Rightarrow$ (iii)'' This is trivial.\\
``(iii) $\Rightarrow$ (i)'' Fix $h'\in H$ such that $(-h'+W)\cap \pi_H(\LL)\ne\emptyset$, which is possible since $W\ne\emptyset$. Consider $h\in H$ such that $h+W=W$. We then have
$(-h'+h+W)\cap \pi_H(\LL)=(-h'+W)\cap \pi_H(\LL)\neq\emptyset$.
As $(W,\LL)$ is uniquely coding, this implies
$-h'+h=-h'$, i.e., $h=0$. Hence $W$ is aperiodic.
\end{proof}

\begin{example}\label{example:projection-ex1}
If $H=\R^d$ and if $W$ is topologically regular, then $\piG_*:\MW\to\MWG$ is a homeomorphism.
This is a special case of the subsequent proposition. \\
\end{example}

\begin{proposition}\label{prop:oneone}
Assume that the only compact subgroup of $H$ is the trivial one.
If $W$ is topologically regular, 
then $(W,\LL)$ is strongly uniquely coding and $\piG_*:\MW\to\MWG$ is a homeomorphism.
\end{proposition}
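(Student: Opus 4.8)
The plan is to reduce Proposition~\ref{prop:oneone} to Lemma~\ref{lemma:projection-injective-1} by verifying that a topologically regular window $W$ over a group $H$ with no nontrivial compact subgroup is strongly uniquely coding; the homeomorphism statement then follows immediately from that lemma. Since the empty window is trivially not strongly uniquely coding and also not aperiodic, I first dispose of the case $W=\emptyset$ (where $\MW=\{\0\}=\MWG$, so $\piG_*$ is trivially a homeomorphism) and henceforth assume $W\ne\emptyset$. By Lemma~\ref{lemma:regular-window}, for a nonempty topologically regular window, strong unique coding is equivalent to aperiodicity of $W$, i.e.\ to the implication $h+W=W\Rightarrow h=0$. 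So the entire content of the proposition is: \emph{a nonempty topologically regular window in a group whose only compact subgroup is trivial is automatically aperiodic}.

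To prove this, let $H_0:=\{h\in H: h+W=W\}$ be the stabiliser of $W$ under translation; it is a subgroup of $H$, and I would show it is a compact subgroup, whence $H_0=\{0\}$ by hypothesis. Closedness of $H_0$ is routine: if $h_n\in H_0$ and $h_n\to h$, then $h+W=W$ by continuity of translation and closedness of $W$ (here using $W=\overline{\inn(W)}$ is closed). For compactness, the key observation is that $H_0$ acts on the compact set $W$ by translations; fixing any point $w_0\in W$, the orbit map $h\mapsto h+w_0$ sends $H_0$ injectively into $W$, and this map is a homeomorphism onto its image because it is continuous, proper (as $H_0$ is closed in $H$ and $W$ is compact, the preimage of $W$ under $h\mapsto h+w_0$ intersected with $H_0$ is all of $H_0$, forcing $H_0\subseteq -w_0+W$), and a continuous bijection from $H_0$ onto the closed, hence compact, subset $H_0+w_0$ of $W$. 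More directly: $H_0+w_0\subseteq W$ is compact, and $H_0=(H_0+w_0)-w_0$ is then a compact subset of $H$ closed under the group operations, hence a compact subgroup. By the standing hypothesis, $H_0=\{0\}$, i.e.\ $W$ is aperiodic.

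With aperiodicity established, Lemma~\ref{lemma:regular-window} gives that $(W,\LL)$ is strongly uniquely coding, and Lemma~\ref{lemma:projection-injective-1} then yields that $\piG_*:\MW\to\MWG$ is a homeomorphism, completing the proof.

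The main obstacle I anticipate is the compactness argument for the stabiliser $H_0$: one must be careful that boundedness of $H_0+w_0$ inside the compact set $W$ genuinely transfers to $H_0$ being relatively compact in $H$ as a \emph{subset of the group}, and that the inherited subset topology on $H_0$ from $H$ agrees with the one making $H_0+w_0$ compact — this is where translation-invariance of the metric on $H$ (from consequence (1) in Subsection~\ref{en:ass}) is used, so that $h\mapsto h+w_0$ is an isometry and hence a homeomorphism $H_0\to H_0+w_0$. Everything else (reduction via the two cited lemmas, the empty-window case, closedness of $H_0$) is formal. One should also double-check that the hypothesis "the only compact subgroup of $H$ is trivial" is being used in exactly the right place and that no implicit second-countability or local-compactness subtlety is needed beyond what is already assumed for $H$ in Subsection~\ref{assnot}.
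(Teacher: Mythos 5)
Your proposal is correct and follows essentially the same route as the paper: reduce to aperiodicity of a nonempty topologically regular window via Lemma~\ref{lemma:regular-window}, show the translation stabiliser $H_W=\{h\in H: h+W=W\}$ is a closed subgroup that is compact because a translate of it sits inside the compact set $W$, conclude $H_W=\{0\}$ from the hypothesis on $H$, and invoke Lemma~\ref{lemma:projection-injective-1} for the homeomorphism. The only quibble is your parenthetical that the empty window is ``not strongly uniquely coding'' --- it is in fact vacuously strongly uniquely coding, which is why the statement is, as the paper says, obvious in that case.
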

\begin{proof}
If $W=\emptyset$, the statement is obvious. Hence assume that $W\ne\emptyset$. In view of Lemmas~\ref{lemma:projection-injective-1} and~\ref{lemma:regular-window} 
we only have to show that $W$ is aperiodic. So let $H_W:=\{h\in H: h+W=W\}$.
$H_W$ is a closed subgroup of $H$, and because $H_W+W=W$ is compact and $W$ is non-empty, $H_W$ is compact. Hence $H_W$ is the trivial subgroup.
\end{proof}
\begin{remark}\label{rem:aperiodicity}
This proposition suggests that one can assume irredundancy of the window
without loss of generality by passing from $(G,H,\LL)$ with window $W$ to $(G,H',\LL')$ with window $W'=W/H_W$ where the period group  $H_W=\{h\in H: h+W=W\}$ of $W$ has been factored out. For non-empty topologically regular windows, this is described in detail in \cite[Lemma 7]{BLM07}, where it is shown among others that $W'$ is again topologically regular and that $m_{H/H_W}(\partial W')=0$ if $m_H(\partial W)=0$.
\end{remark}

\begin{example}
A prominent example are the rhombic Penrose tilings, which can be realised as regular model sets in $G=\R^2$ with a topologically regular window in $H=\R^2\times \Z/5\Z$, see \cite[Section~3.2]{Moody97} and \cite[Example~7.11]{BaakeGrimm13}. In fact this description goes back to de Bruijn, see also the discussion in \cite[Section~7.5.2]{BaakeGrimm13}. In that case we have $\hat0\notin \CW$, and the cardinalities $1,2,10$ of the fibres ${\hat\pi}^{-1}\{\hx\}$ for $\hx\in\hX$, compare Definition~\ref{def:hat-pi}, are discussed in \cite{Robinson1996}.
\end{example}

\subsection{$\BB$-free systems}\label{subsec:b-free}

Let $(b_k)_{k\in \N}$ be an increasing sequence of pairwise coprime integers greater than one such that
\begin{equation}\label{eq:B-free-summability}
\sum_{k\in \N} \frac{1}{b_k}<\infty\ .
\end{equation}
Writing  $\BB=\{b_k: k\in\N\} \subset \{2,3,\ldots\}$, the set $V_{\BB}$ of $\BB$-free integers  consists of all integers having no factor in $\BB$, i.e., we have
\begin{displaymath}
V_{\BB}=\Z\setminus \bigcup_{b\in \BB} b\Z\ .
\end{displaymath}
Such sets have been studied in \cite{ALR2013}.  With $\BB=\{p^2: p \text{ prime}\}$, the set $V_{\BB}$ are the square-free integers \cite{Sarnak2011, Peckner2012, CS13}. As remarked in \cite{BaakeHuck14}, the $\BB$-free integers and their lattice generalisations are weak model sets. 
Indeed, consider $G=\Z$, the product group $H=\prod_{k\in \N} \Z/b_k\Z$, and write $h=(h_k)_{k\in\N}$ for $h\in H$. 
The map $\iota:G\to H$, defined by $\iota(g)=(g\mod b_k\Z)_{k\in\N}$, 
is a continuous embedding of $G$ into $H$, and $\iota(G)$ is dense in $H$, because the $b_k$ are pairwise coprime. Define the lattice $\LL\subseteq G\times H$ to be the diagonal embedding $\LL=\{(g,\iota(g)): g\in G\}$. Then $(G,H,\LL)$ is a cut-and-project scheme. The $\BB$-free integers are the weak model set defined by the compact window 
\begin{displaymath}
W=\prod_{k\in \N} \Z/b_k\Z\setminus \{0_k\}\ ,
\end{displaymath} 
where $0$ denotes the neutral element in $H$.

\begin{lemma}\label{lemma:b-free-full}
$\QM$ has full support, i.e., $\supp(\QM)=\MW$.
\end{lemma}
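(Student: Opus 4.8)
The goal is to show that every configuration in $\MW$ lies in $\supp(\QM)$, which by Proposition~\ref{prop:exceptional-points} amounts to verifying condition~\eqref{eq:non-exceptional} for \emph{every} $\hx=(x_G,x_H)+\LL\in\hX$ and every finite $\LL'\subseteq\LL$. Since in the $\BB$-free setting $\LL$ is the diagonal $\{(g,\iota(g)):g\in\Z\}$, a finite $\LL'$ corresponds to a finite set $F\subset\Z$, and $\piH(\LL')=\{\iota(g):g\in F\}$. So I must show that for every $x_H\in H$ and every finite $F\subset\Z$,
\begin{equation*}
m_H\bigl\{h\in H:\ (-x_H+W)\cap\iota(F)=(-h+W)\cap\iota(F)\bigr\}>0\ .
\end{equation*}
The first step is to unwind what $(-x_H+W)\cap\iota(F)$ means coordinatewise: writing $x_H=(x_k)_k$, an element $\iota(g)=(g\bmod b_k)_k$ lies in $-x_H+W$ iff $g+x_k\not\equiv 0\pmod{b_k}$ for all $k$, i.e. iff $g\not\equiv -x_k\pmod{b_k}$ for all $k$. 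Thus membership of $\iota(g)$ in $-x_H+W$ is governed entirely by the residues $(x_k \bmod b_k)_{k}$, and since $F$ is finite, only \emph{finitely many} $k$ can possibly produce a collision $g\equiv -x_k\pmod{b_k}$ with $g\in F$ (for each $g\in F$ and each $k$ with $b_k>\max_{g,g'\in F}|g-g'|+ \dots$, distinctness forces at most one offending residue; but more simply, a collision $g\equiv -x_k$ for $g\in F$ constrains $x_k$ to one of $|F|$ residues mod $b_k$). The key observation is that whether or not such a collision occurs for index $k$ depends only on $x_k\bmod b_k$, and by the summability assumption~\eqref{eq:B-free-summability} the ``bad'' event can be controlled.

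The second step is the measure-theoretic heart. Normalized Haar measure on $H=\prod_k\Z/b_k\Z$ is the product of the uniform measures on each factor; so I want to produce a set of positive measure of $h=(h_k)_k$ for which the pattern $(-h+W)\cap\iota(F)$ equals the fixed pattern $(-x_H+W)\cap\iota(F)$. For each $g\in F$ and each $k$, define $h_k$ to reproduce the same indicator value as $x_k$: concretely, require $h_k\equiv x_k\pmod{b_k}$ for all $k$ in the finite set $K_F$ of indices that are ``active'' for $F$ (those $k$ for which some $g\in F$ has $g\equiv -x_k\pmod{b_k}$), and require $h_k\not\equiv -g\pmod{b_k}$ for all $g\in F$ and all $k\notin K_F$. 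The first requirement fixes finitely many coordinates, cutting measure by a positive factor $\prod_{k\in K_F}b_k^{-1}>0$. The second requirement, for each remaining $k$, excludes at most $|F|$ residues out of $b_k$, hence is satisfied with probability $\ge 1-|F|/b_k$; these events are independent across $k$, and $\prod_{k\notin K_F}(1-|F|/b_k)$ converges to a positive limit precisely because $\sum_k 1/b_k<\infty$ (and $b_k\to\infty$ so only finitely many factors could be nonpositive, which would anyway be absorbed into $K_F$ by enlarging it). Multiplying, the target set has measure at least $\bigl(\prod_{k\in K_F}b_k^{-1}\bigr)\cdot\prod_{k\notin K_F}(1-|F|/b_k)>0$. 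One then checks that on this set the two patterns indeed coincide: for $g\in F$, if $\iota(g)\in -x_H+W$ then $g\not\equiv -x_k$ for all $k$, so $k\notin K_F$ and we arranged $g\not\equiv -h_k$, giving $\iota(g)\in -h+W$; conversely if $\iota(g)\notin -x_H+W$ then $g\equiv -x_k$ for some $k\in K_F$, and since $h_k\equiv x_k$ there, $\iota(g)\notin -h+W$ too.

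The third step is purely bookkeeping: conclude via Proposition~\ref{prop:exceptional-points} that $\nuW(\hx)\in\supp(\QM)$ for all $\hx$, hence $\nuW(\hX)\subseteq\supp(\QM)$; since $\supp(\QM)$ is closed and $\MW=\overline{\nuW(\hX)}$ (by definition of $\MW$ as the vague closure), we get $\MW\subseteq\supp(\QM)$, and the reverse inclusion is trivial. The main obstacle I anticipate is the convergence of the infinite product $\prod_k(1-|F|/b_k)$ to a strictly positive value: this is exactly where the hypothesis $\sum 1/b_k<\infty$ is used, and one must be slightly careful to first enlarge the finite ``active'' set $K_F$ so that $b_k>|F|$ for all $k\notin K_F$, ensuring every factor is strictly positive; after that the positivity is the standard criterion $\sum_k |F|/b_k<\infty$ for convergence of $\prod(1-|F|/b_k)$ to a nonzero limit. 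Everything else is elementary coordinatewise arithmetic in $\prod_k\Z/b_k\Z$.
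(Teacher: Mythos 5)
Your overall strategy is the paper's: reduce to Proposition~\ref{prop:exceptional-points}, work coordinatewise in $H=\prod_k\Z/b_k\Z$, pin finitely many coordinates of $h$ to those of $x_H$, and get positivity of the product measure from $\sum_k 1/b_k<\infty$, finishing with closedness of $\supp(\QM)$ and $\MW=\overline{\nuW(\hX)}$. However, there is a genuine gap at the key step: your set $K_F$ of \emph{all} ``active'' indices, i.e.\ all $k$ for which some $g\in F$ satisfies $g\equiv -x_k \pmod{b_k}$, need not be finite, and your justification for its finiteness does not work ($x_k$ is a fixed residue, so nothing prevents the collision $x_k\equiv -g$ from occurring for infinitely many $k$). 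Concretely, take $x_H=\iota(m)$ with $-m\in F$; the most important instance is $x_H=0$ and $0\in F$, i.e.\ exactly the point $\hat 0$ whose configuration is the $\BB$-free set itself. There every $k$ is active, so your requirement $h_k\equiv x_k$ for all $k\in K_F$ confines $h$ to a set of measure $\prod_{k\in K_F}1/b_k=0$ (each factor is at most $1/2$), and your construction produces no positive-measure witness, even though the target set of Proposition~\ref{prop:exceptional-points} does have positive measure.

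The repair is the one the paper uses, and it is compatible with your pattern-checking argument: for each excluded $g\in F$ (those with $\iota(g)\notin -x_H+W$) you only need to preserve \emph{one} witnessing coordinate, not all of them. Choose for each such $g$ the smallest index $k(g)$ with $x_{k(g)}+g\equiv 0$, let $k_1$ exceed all these finitely many indices and be large enough that $b_k>2N$ for $k>k_1$ (with $F\subseteq\{-N,\dots,N\}$), and require $h_k=x_k$ for \emph{all} $k\leqslant k_1$ while for $k>k_1$ only requiring $h_k\in\{N+1,\dots,b_k-N-1\}$, a ``safe zone'' avoiding every residue $-g\bmod b_k$, $g\in F$. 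This set has measure at least $\prod_{k\leqslant k_1}b_k^{-1}\cdot\prod_{k>k_1}\bigl(1-\tfrac{2N+1}{b_k}\bigr)>0$ by the summability hypothesis, and the verification that $(-x_H+W)\cap\piH(\LL')=(-h+W)\cap\piH(\LL')$ runs exactly as in your second step: exclusions are witnessed at the pinned coordinate $k(g)\leqslant k_1$, and no new exclusions can arise either at pinned coordinates or in the safe zone. (A small additional slip in your check: from $\iota(g)\in -x_H+W$ you cannot conclude $k\notin K_F$, since $k$ may be active because of a different $g'$; but this is harmless, because at such $k$ you have $h_k=x_k$ and hence still $h_k+g\not\equiv 0$.)
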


\begin{proof}
We show $\nuW(\hX)\subseteq \supp(\QM)$ using Proposition~\ref{prop:exceptional-points}. Then the claim follows since $\supp(\QM)$ is closed. Fix a fundamental domain $X$ of $\LL$ and take arbitrary $x\in X$. We must show that
\begin{displaymath}
m_H\{h\in H: (-x_H+W)\cap \piH(\LL')=(-h+W)\cap \piH(\LL')\}>0 \qquad \text{for all finite }
\LL'\subseteq \LL\ .
\end{displaymath}
We can of course restrict to sets $\LL'=\{(n, \iota(n)): n\in \{-N,\ldots, N\}\}$ for $N\in \N$. Let $\LL'_x=\{\ell\in \LL':\exists k=k(\ell)\in\N\text{ s.t. }x_{H,k}+\ell_{H,k}=0 \mod b_k\}$. Given $\ell\in\LL'$, we choose the smallest $k$ with this property as $k(\ell)$. Then, for any $\ell\in\LL'$, the condition $\ell_H\in (-x_H+W)\cap \piH(\LL')$ is equivalent to $\ell\in\LL'\setminus\LL'_x$. Let $k_0=\max\{k(\ell):\ell\in \LL'_x\}$ and fix $k_1>k_0$ such that $b_{k_1}>2N$. Observe that $\ell_{H,k}=\iota(n)_k\in \{0,\ldots, N\}\cup \{b_k-N,\ldots, b_k-1\}$ for all $\ell\in \LL'$ and $k>k_1$, and define
\begin{displaymath}
H_x:=\{h\in H: h_k=x_{H,k} \text{ if } k\leqslant k_1,\ h_k\in\{N+1,\ldots, b_k-N-1\} \text{ if }k>k_1\}\ .
\end{displaymath}
Then $m_H(H_x)\geqslant \prod_{k\leqslant k_1}\frac{1}{b_k}\cdot\prod_{k>k_1}\left(1-\frac{2N+1}{b_k}\right)>0$, and it remains to be shown that $(-x_H+W)\cap\piH(\LL')=(-h+W)\cap \piH(\LL')$ for every $h\in H_x$. So let $h\in H_x$ and $\ell\in \LL'$.
If $\ell\in \LL'_x$, then $\ell_H\notin -x_H+W$ and $h_{k(\ell)}+\ell_{H,k(\ell)}=x_{H,k(\ell)}+\ell_{H,k(\ell)}=0 \mod b_{k(\ell)}$, so that $\ell_H\notin -h+W$, too.

If $\ell\in \LL'\setminus \LL'_x$, then $\ell_H\in -x_H+W$, and we show that $\ell_H\in -h+W$ as well: Suppose for a contradiction that this is not the case. Then there is some $k$ such that $h_k+\ell_{H,k}=0 \mod b_k$. But as $x_{H,k}+\ell_{H,k}\ne 0 \mod b_k$ by assumption, we must have $k>k_1$, such that $h_k\in \{N+1,\ldots, b_k-N-1\}$. Together with $\ell_{H,k}\in  \{0,\ldots, N\}\cup \{b_k-N,\ldots, b_k-1\}$, a case analysis implies $h_k+\ell_{H,k}\ne 0 \mod b_k$, a contradiction.
\end{proof}

We study the domain of injectivity of the projection $\piG_*: \MW\to \MWG$.\footnote{Note that $(W,\LL)$ is not uniquely coding: 
Consider for example $h,h'\in H$ given by $h_1=h'_1=1$, $h_2=h'_2=b_2-1$, $h_3=1$,
$h'_3=b_3-1$, $h_{2k}=h_{2k}'=b_{2k}-k$  and
 $h_{2k+1}=h_{2k+1}'=k$ for $k\geqslant2$.
Then $h\neq h'$ and $h,h'\in W$, and it is not hard to check that $h+\iota(k),h'+\iota(k)\not\in W$ for all $k\in\Z\setminus\{0\}$.}
To this end we introduce the following notation:
For $A\subseteq \Z$ and $k\in\N$ we write $\langle A\rangle_k=\{\iota(g)_k: g\in A\}$.
Define $\Y\subseteq \MW$ by
\begin{displaymath}
\Y=\{\nu\in \MW : \card\,\langle\supp(\piG_* \nu)\rangle_k=b_k-1 \text{ for all }k\in\N\}\ .
\end{displaymath}
The set $\Y\subseteq \MW$ is measurable and consists of all measures such that ``every $b_k$-reduction misses exactly one coset''. The set $\piG_*(\Y)$ was studied for square-free integers by Peckner \cite{Peckner2012} (called $X_1$ in his paper), for $\BB$-free systems it is the set $Y$ of \cite{Kulaga-Przymus2014}, and for visible lattice points the set $\A_1$ of \cite{BaakeHuck14}. The next lemmas have close analogues in these three publications.

\begin{lemma}\label{lemma:b-free-1-1}
$\piG_*$ is \oneone on $\Y$ and $\piG_*|_\Y:\Y\to\piG_*(\Y)$ is a Borel isomorphism.
\end{lemma}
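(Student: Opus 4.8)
The plan is to exhibit an explicit left inverse for the restriction $\piG_*|_\Y$, i.e.\ to reconstruct a measure $\nu\in\Y$ from its projection $\piG_*\nu$, and then to observe that both maps involved are Borel. Recall that any $\nu\in\MW$ satisfies $\nu\leqslant\nuW(\hx)$ for a unique $\hx=(x_G,x_H)+\LL$ (Proposition~\ref{prop:projection-usc}a together with the injectivity of $\piG|_\LL$), and that $\supp(\nu)\subseteq(x+\LL)\cap(G\times W)$. Writing $\supp(\nu)=\{(g,\iota(g)+x_H):g\in S\}$ for some $S\subseteq\Z$ — here I use that $\LL$ is the diagonal, so a point of $x+\LL$ is determined by its $G$-coordinate — we have $\supp(\piG_*\nu)=S$, so $S$ is recovered from $\piG_*\nu$. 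The content of the lemma is that the ``internal'' datum $x_H$ (equivalently, $\hx$, equivalently $\nu$ itself, since $\nu=\nuW(\hx)$ once the density is maximal — but here we only get $\nu\leqslant\nuW(\hx)$) is \emph{also} recovered from $S$ when $\nu\in\Y$, and that $\nu=\sum_{g\in S}\delta_{(g,\iota(g)+x_H)}$ is then determined.

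First I would fix $k\in\N$ and analyse the $k$-th coordinate. For $g\in\Z$ the point $(g,\iota(g)+x_H)$ lies in $G\times W$ iff $\iota(g)_j+x_{H,j}\neq 0\bmod b_j$ for every $j$; in particular, looking only at coordinate $k$, membership of $(g,\cdot)$ in $\supp(\nuW(\hx))$ forces $\iota(g)_k\neq -x_{H,k}\bmod b_k$, hence $\langle S\rangle_k\subseteq(\Z/b_k\Z)\setminus\{-x_{H,k}\}$, so $\card\langle S\rangle_k\leqslant b_k-1$ automatically. The defining condition of $\Y$ is that this is an \emph{equality}, $\card\langle S\rangle_k=b_k-1$; since the only coset that can be missing is $-x_{H,k}$, the equality forces $\langle S\rangle_k=(\Z/b_k\Z)\setminus\{-x_{H,k}\}$, and therefore $x_{H,k}$ is the \emph{unique} element of $\Z/b_k\Z$ not hit by $\langle S\rangle_k=\langle\supp(\piG_*\nu)\rangle_k$. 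Doing this for all $k$ reconstructs $x_H=(x_{H,k})_k\in H$, hence $\hx\in\hX$ (with $x_G$ irrelevant, or fixed by a choice of fundamental domain), hence $\supp(\nu)=\{(g,\iota(g)+x_H):g\in S\}$, hence $\nu$ as the counting measure on that set. This shows $\piG_*|_\Y$ is injective, with explicit inverse $\psi:\piG_*(\Y)\to\Y$, $\psi(\mu)=\sum_{g\in\supp(\mu)}\delta_{(g,\,\iota(g)+x_H(\mu))}$ where $x_H(\mu)_k$ is the missing coset of $\langle\supp(\mu)\rangle_k$.

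It then remains to check the Borel-isomorphism claim. The map $\piG_*:\MW\to\MWG$ is continuous (Proposition~\ref{prop:projection-homeo}a), hence Borel; $\Y$ is Borel in $\MW$ (stated in the text, since $\nu\mapsto\card\langle\supp(\piG_*\nu)\rangle_k$ is Borel for each $k$ and $\Y$ is a countable intersection of level sets), so $\piG_*(\Y)$ is a Borel-measurable injective image of a Borel subset of a Polish space, hence itself Borel, and $\piG_*|_\Y$ is a Borel isomorphism onto it by the Lusin–Souslin theorem; alternatively one checks directly that the inverse $\psi$ is Borel, since $\mu\mapsto\supp(\mu)$ and, for each $k$, $\mu\mapsto x_H(\mu)_k$ are Borel, and hence so is $\mu\mapsto\psi(\mu)$. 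I expect the only mildly delicate point — the ``main obstacle'', such as it is — to be the bookkeeping that the missing coset in coordinate $k$ is genuinely well-defined and equals $-x_{H,k}$ precisely under the $\Y$-condition (one must be careful that \emph{a priori} $\langle S\rangle_k$ could miss more than one coset, which is exactly what membership in $\Y$ rules out); everything else is routine descriptive set theory and the structure of the diagonal lattice.
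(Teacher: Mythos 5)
Your argument is correct and is essentially the paper's proof: both rest on the observation that $\langle\supp(\piG_*\nu)\rangle_k\subseteq\Z/b_k\Z\setminus\{-x_{H,k}\}$ for the unique $\hx=(0,x_H)+\LL$ with $\nu\leqslant\nuW(\hx)$ (which exists because every $\nu\in\Y$ is nonzero), so that the $\Y$-condition identifies $-x_{H,k}$ as the unique missing coset, recovers $x_H$ and hence $\nu$ from $\piG_*\nu$, and the Borel-isomorphism claim then follows from Lusin--Souslin exactly as in the paper. Your explicit left inverse $\psi$ is merely a constructive rephrasing of the paper's injectivity step (the paper compares two measures $\nu,\nu'$ with equal projections rather than reconstructing one), so there is no substantive difference in approach.
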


\begin{proof}
Choose a fundamental domain $X$ of $\LL$ satisfying $\piG(X)=\{0\}$,
so that each $x\in X$ is of the form $(0,x_H)$. Consider $\nu,\nu'\in \Y$ such that $\piG_*\nu=\piG_*\nu'$. 
Since $\nu,\nu'\ne \underline{0}$, there are unique $x,x'\in X$ such that
\begin{displaymath}
\nu\le \nuW(x+\LL), \qquad \nu'\le\nuW(x'+\LL)\ .
\end{displaymath}
Then $\piG_*\nu\le \piG_*\nuW(x+\LL)$ and $\piG_*\nu'\le\piG_*\nuW(x'+\LL)$. 
%Consider any $j\in \N$ and 
Note that $h+W=\prod_k \left(\Z/b_k\Z\setminus \{h_k\}\right)$ for any $h\in H$. We thus have
\begin{equation*}
\langle\supp(\piG_*\nu)\rangle_k
\subseteq
\langle\supp(\piG_*\nuW(x+\LL))\rangle_k
\subseteq
\langle\iota^{-1}(-x_H+W)\rangle_k
=
\Z/b_k\Z\setminus\{-x_{H,k}\}\quad\text{for all }k\in\N\ ,
\end{equation*}
and $\langle\supp(\piG_*\nu')\rangle_k\subseteq\Z/b_k\Z\setminus\{-x_{H,k}'\}$ follows in the same way.
%
%\begin{displaymath}
%{-x_j}\not\in \supp(\piG_*\nuW(x+\LL))_j, \qquad \delta_{-x_j'}\perp (\piG_*\nuW(x'+\LL))_j,
%\end{displaymath}
%
Since by assumption $ \card\langle \supp(\piG_* \nu)\rangle_k=b_k-1$ 
and $\piG_* \nu = \piG_* \nu'$, we must have $x_{H,k}=x_{H,k}'$ for all $k\in\N$. We thus conclude $x=x'$, which implies $\nu=\nu'$, because $\piG|_\LL$ is \oneone. Hence $\piG_*|_\Y$ is \oneone, and it follows from classical results by Lusin and Souslin \cite[Corollary 15.2]{Kechris1995} that
$\piG_*|_\Y:\Y\to\piG_*(\Y)$ is a Borel isomorphism.
\end{proof}
\begin{lemma}\label{lemma:b-free-Y-full}
We have $\nuW(\hX_{max})\subseteq \Y$.
As a consequence, $\Y$ has full Mirsky measure.
\end{lemma}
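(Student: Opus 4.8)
The plan is to reduce everything to Moody's Theorem~\ref{theo:Moody}: I will show that if $\nuW(\hx)$ fails to lie in $\Y$, then $\nuW(\hx)$ is in fact the configuration produced by a \emph{strictly smaller} window, so that its density along $(A_n)_{n\in\N}$ must be strictly below the maximal value $D_{\QM}=\dL\cdot m_H(W)$, forcing $\hx\notin\hX_{max}$.

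Concretely, I would work with the fundamental domain $X$ with $\piG(X)=\{0\}$ used in the proof of Lemma~\ref{lemma:b-free-1-1}, so that $\hx=x+\LL$ with $x=(0,x_H)$ and $\supp(\piG_*\nuW(\hx))=\iota^{-1}(-x_H+W)$; in particular $\langle\supp(\piG_*\nuW(\hx))\rangle_k\subseteq\Z/b_k\Z\setminus\{-x_{H,k}\}$ for every $k$, since $g\in\iota^{-1}(-x_H+W)$ forces $g\not\equiv-x_{H,k}\bmod b_k$. Now assume $\hx\in\hX_{max}$ but $\nuW(\hx)\notin\Y$. Then for some $k_0$ the set $\langle\supp(\piG_*\nuW(\hx))\rangle_{k_0}$ has fewer than $b_{k_0}-1$ elements, so some residue $c\in\Z/b_{k_0}\Z$ with $c\ne-x_{H,k_0}$ is not attained; putting $c':=x_{H,k_0}+c$, which is $\ne0$, this says exactly that $(x_H+\iota(g))_{k_0}\ne c'$ for every $g$ in $\supp(\piG_*\nuW(\hx))$, i.e.\ $\supp\nuW(\hx)\subseteq G\times W''$ with
\[
W'':=\{h\in W:h_{k_0}\ne c'\}=\Bigl(\prod_{k\ne k_0}(\Z/b_k\Z\setminus\{0\})\Bigr)\times(\Z/b_{k_0}\Z\setminus\{0,c'\})\ ,
\]
a compact subset of $H$ contained in $W$. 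Since $W''\subseteq W$ and $\supp\nuW(\hx)\subseteq G\times W''$, we get $\nuW(\hx)=\nu_{W''}(\hx)$, and computing with the product Haar measure gives $m_H(W'')=\frac{b_{k_0}-2}{b_{k_0}-1}\,m_H(W)<m_H(W)$, the strictness coming from $b_{k_0}\geqslant2$ together with $m_H(W)=\prod_k(1-1/b_k)>0$ (by \eqref{eq:B-free-summability}).

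Then I would apply the semi-uniform upper bound \eqref{eq:nu-density} of Theorem~\ref{theo:Moody}, but to the window $W''$ instead of $W$: for every $\varepsilon>0$ and all large $n$ (uniformly in $\hx$), $\nu_{W''}(\hx)(A_n\times H)/m_G(A_n)\leqslant\dL\cdot m_H(W'')+\varepsilon$. Combining this with $\nuW(\hx)=\nu_{W''}(\hx)$ and $D_{\QM}=\dL\cdot m_H(W)$,
\begin{align*}
\lim_{n\to\infty}\frac{\nuW(\hx)(A_n\times H)}{m_G(A_n)}
&=\lim_{n\to\infty}\frac{\nu_{W''}(\hx)(A_n\times H)}{m_G(A_n)}\\
&\leqslant\dL\cdot m_H(W'')=\frac{b_{k_0}-2}{b_{k_0}-1}\,D_{\QM}<D_{\QM}\ ,
\end{align*}
which contradicts $\hx\in\hX_{max}$. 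Hence $\nuW(\hX_{max})\subseteq\Y$. The ``as a consequence'' part is then immediate: $\nuW^{-1}(\Y)\supseteq\hX_{max}$, so by Definition~\ref{def:Q_M} and Theorem~\ref{theo:Moody-topogical}a, $\QM(\Y)=m_\hX(\nuW^{-1}(\Y))\geqslant m_\hX(\hX_{max})=1$.

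I expect the only genuine obstacle to be the coset bookkeeping in the second step — turning the missed residue class into an honest shrinking of the window from $W$ to $W''$, and checking $c'\ne0$ so that $W''$ is again of product type with $m_H(W'')<m_H(W)$. After that the density contradiction drops out of Moody's theorem with no delicate estimates; in particular one does \emph{not} need to prove directly that $\langle\iota^{-1}(-x_H+W)\rangle_k$ is all of $\Z/b_k\Z\setminus\{-x_{H,k}\}$.
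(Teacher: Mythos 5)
Your proof is correct and follows essentially the same route as the paper's: maximal density forces each $b_k$-reduction of the support to miss exactly one coset, because missing a second coset would push the density strictly below $\dL\cdot m_H(W)=D_{\QM}$. The only difference is that you make precise the step the paper leaves implicit (``then the upper density is less than maximal'') by identifying $\nuW(\hx)$ with $\nu_{W''}(\hx)$ for the strictly smaller compact window $W''$ and applying the semi-uniform bound \eqref{eq:nu-density} of Theorem~\ref{theo:Moody} to $W''$, which is a perfectly valid way to supply that estimate.
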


\begin{proof}
Take a van Hove sequence $(A_n)_n$ associated to $\hX_{max}$. Then for any $\hat x\in\hX_{max}$ the density of $\nuW(\hat x)$ is given by
\begin{displaymath}
\lim_{n\to\infty} \frac{\nuW(\hat x)(A_n\times H)}{m_G(A_n)}
=
1\cdot m_H(W)
=
\prod_{k\in\N} \left(1-\frac{1}{b_k}\right)\ .
\end{displaymath}
This implies $\card\langle \supp(\piG_* \nuW(\hat x))\rangle_k=b_k-1$ for every $k\in \N$. Indeed, otherwise for some $b_k$ at least two cosets will be missed. But then $\nuW(\hat x)$ has an upper density less than maximal. We thus conclude $\nuW(\hat x)\in \Y$. As $\hat x\in \hX_{max}$ was arbitrary, we have $\nuW(\hX_{max})\subseteq \Y$. Since $\hX_{max}\subseteq \hX$ has full Haar measure, $\Y$ must have full Mirsky measure.
\end{proof}

For $\BB$-free systems, we have the following strengthening of Corollary~\ref{cor:pp-spectrum}.

\begin{proposition}[discrete spectrum for $\BB$-free systems]
For any $\hx\in\hXmax$ we have $\MWG(\hx)=\MWG$, and the measure theoretic dynamical systems
$(\MWG(\hx),\QMG,S)$ and $(\hX,m_\hX,\hT)$ are isomorphic. The configuration $\nuW(\hat 0)$ has maximal density.
\end{proposition}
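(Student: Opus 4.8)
The plan is to combine the general machinery already established — Corollary~\ref{cor:pp-spectrum} and Theorem~\ref{theo:Moody-topogical} — with the two special features of $\BB$-free systems proved just above, namely that $\QM$ has full support (Lemma~\ref{lemma:b-free-full}) and that $\piG_*$ is injective on the large set $\Y\supseteq\nuW(\hXmax)$ (Lemmas~\ref{lemma:b-free-1-1} and~\ref{lemma:b-free-Y-full}). First I would fix $\hx\in\hXmax$. By Theorem~\ref{theo:Moody-topogical}c) we have $\supp(\QM)\subseteq\MW(\hx)\subseteq\MW$, and Lemma~\ref{lemma:b-free-full} gives $\supp(\QM)=\MW$; hence $\MW(\hx)=\MW$ for every $\hx\in\hXmax$. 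Applying the projection $\piG_*$ and using that it is continuous with $\piG_*(\MW(\hx))=\MWG(\hx)$ (compactness, as in the corollary to Theorem~\ref{theo:Moody-topogical}) yields $\MWG(\hx)=\piG_*(\MW)=\MWG$. This establishes the first assertion.

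Next I would obtain the measure-theoretic isomorphism. By Corollary~\ref{cor:pp-spectrum}, for any $\hx\in\hXmax$ the system $(\MWG(\hx),\QMG,S)$ is a factor of $(\hX,m_\hX,\hT)$ via $\nuWG$, and $(\MW(\hx),\QM,S)$ is measure-theoretically isomorphic to $(\hX,m_\hX,\hT)$. It therefore suffices to upgrade the factor map $\piG_*:(\MW,\QM,S)\to(\MWG,\QMG,S)$ to an isomorphism. This is where the set $\Y$ enters: by Lemma~\ref{lemma:b-free-Y-full}, $\Y$ has full $\QM$-measure, and by Lemma~\ref{lemma:b-free-1-1}, $\piG_*|_\Y$ is a Borel isomorphism onto $\piG_*(\Y)$, which then has full $\QMG$-measure. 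Hence $\piG_*$ is an isomorphism mod $0$ between $(\MW,\QM,S)$ and $(\MWG,\QMG,S)$, and composing with the isomorphism from Corollary~\ref{cor:pp-spectrum} shows $(\MWG(\hx),\QMG,S)\cong(\MW,\QM,S)\cong(\hX,m_\hX,\hT)$.

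Finally, the claim that $\nuW(\hat 0)$ has maximal density, i.e.\ $\hat 0\in\hXmax$, I would read off from the density computation already performed in the proof of Lemma~\ref{lemma:b-free-Y-full}: in the $\BB$-free cut-and-project scheme, the lattice $\LL=\{(g,\iota(g)):g\in\Z\}$ has $\dL\cdot m_H(W)=m_H(W)=\prod_{k}(1-1/b_k)$, and for $\hx=\hat 0$ the count $\nuW(\hat 0)(A_n\times H)=\card(V_\BB\cap A_n)$ has exactly this asymptotic density relative to $m_G(A_n)$; the summability condition~\eqref{eq:B-free-summability} guarantees this via the standard inclusion–exclusion/Eratosthenes-sieve estimate (equivalently, this is the $\hx=\hat 0$ case of the a.e.\ statement in Theorem~\ref{theo:Moody}, which for $\BB$-free integers holds at the origin by a direct sieve argument). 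Thus $\hat 0\in\hXmax$, which also re-derives pure point spectrum for the hull of the $\BB$-free integers themselves.

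The main obstacle is the density statement for $\nuW(\hat 0)$: unlike the generic-point statements, $\hat 0$ is typically a discontinuity point of $\nuW$, so one cannot invoke Theorem~\ref{theo:Moody} at $\hat 0$ for free and must instead supply the classical sieve estimate showing $\lim_n \card(V_\BB\cap[-n,n])/(2n+1)=\prod_k(1-1/b_k)$, which is exactly where the coprimality of the $b_k$ and the summability~\eqref{eq:B-free-summability} are used. The rest is bookkeeping with the already-proved lemmas.
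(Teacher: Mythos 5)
Your argument is correct and takes essentially the same route as the paper: Lemma~\ref{lemma:b-free-full} plus Theorem~\ref{theo:Moody-topogical} give $\MW(\hx)=\MW$ (hence $\MWG(\hx)=\MWG$), Theorem~\ref{theo:projection-mth}a (equivalently Corollary~\ref{cor:pp-spectrum}) gives the isomorphism with $(\hX,m_\hX,\hT)$, and Lemmas~\ref{lemma:b-free-1-1} and~\ref{lemma:b-free-Y-full} upgrade $\piG_*$ to an isomorphism mod~$0$. The only difference is in the last claim, where the paper simply cites the density computation in \cite{BaakeHuck14} while you sketch the classical sieve estimate; your remark that Theorem~\ref{theo:Moody} cannot be invoked at $\hat 0$ for free is exactly the reason such an external argument is required.
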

\begin{proof}
Observe that $\supp(\QM)=\MW$ by Lemma~\ref{lemma:b-free-full}. Hence $\MW(\hx)=\MW$ for all $\hx\in\hXmax$ by Theorem~\ref{theo:Moody-topogical}d, so that
$(\MW(\hx),\QM,S)$ and $(\hX,m_\hX,\hT)$ are isomorphic by Theorem~\ref{theo:projection-mth}a. That these systems are also isomorphic to $(\MWG(\hx),\QMG,S)$ follows now from Lemmas~\ref{lemma:b-free-1-1} and~\ref{lemma:b-free-Y-full}. The density of $\nuW(\hat 0)$ is e.g. computed in \cite{BaakeHuck14}.
\end{proof}

\begin{remark}
The Mirsky measure $\QM$ is not the only invariant probability on $\MW$ that assigns
full mass to the uniqueness set $\Y$. In \cite{Peckner2012} and
\cite{Kulaga-Przymus2014} it is proved that $\piG_*(\Y)$ has full measure also under the measure of maximal entropy. Therefore also this measure can be transferred to $\Y\subseteq\MW$ unambiguously.
\end{remark}

\begin{remark}
Recently, the authors of \cite{BKKL2015} studied the dynamics of more general $\BB$-free systems. They investigate the situation when
the two assumptions that all $b_k$ are pairwise coprime and that $\sum_{k\in\N}1/b_k<\infty$ are weakened or skipped and obtain a wealth of topological, measure theoretic and number theoretic results. With regards to the present setting,
the lack of coprimeness means that $\iota(G)$ is no longer dense in $H$ so that $H$ must be replaced by $\overline{\iota(G)}$. Note, however, that the measure $m_H$ must be replaced by the Haar measure on $\overline{\iota(G)}$. The summability assumption was only used to show that the window has positive Haar measure and that $\supp(\QM)=\MW$ in Lemma~\ref{lemma:b-free-full}.
Our Theorems~\ref{theo:projection-top},~\ref{theo:Moody},~\ref{theo:Moody-extension} and \ref{theo:Moody-topogical} apply without this assumption. So we obtain results similar to some of those in \cite[Theorems A, B and E]{BKKL2015}, and also our definition of the Mirsky measure $\QMG$ corresponds to the one in \cite[Definition 2.30]{BKKL2015}.
\end{remark}

\section{Basic observations}\label{sec:basics}

\begin{lemma}\label{lemma:basic-0}
\begin{enumerate}[a)]
\item $S_g(\nuW(\hx))=\nuW(\hT_g\hx)$ for all $\hx\in\hX$ and $g\in G$.
\item $S_g(\MW)=\MW$ for all $g\in G$.
\end{enumerate}
\end{lemma}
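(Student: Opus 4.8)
The plan is to verify part a) by a direct computation using the definition of $\nuW$ together with the $\LL$-invariance of the relevant sets, and then derive part b) as an immediate consequence via a density argument on $\hX$.

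First I would prove a). Fix $\hx = x+\LL \in \hX$ and $g \in G$. Recall from \eqref{eq:nuW-def} that $\nuW(\hx) = \sum_{y \in (x+\LL)\cap(G\times W)}\delta_y$, and that $S_g$ acts on $\cM$ by $(S_g\nu)(A) = \nu(T_{-g}A)$ where $T_g x = (g,0)+x$; equivalently $S_g$ pushes a point mass $\delta_y$ forward to $\delta_{(g,0)+y}$. Hence
\begin{displaymath}
S_g(\nuW(\hx)) = \sum_{y \in (x+\LL)\cap(G\times W)} \delta_{(g,0)+y} = \sum_{z \in ((g,0)+x+\LL)\cap((g,0)+(G\times W))} \delta_z .
\end{displaymath}
Now $(g,0)+(G\times W) = G\times W$ since translation by $(g,0)$ acts trivially on the $H$-coordinate, and $(g,0)+x+\LL$ is precisely a representative set for the coset $\hT_g\hx = (g,0)+x+\LL$. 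Therefore the right-hand side equals $\sum_{z \in (\hT_g\hx \text{ rep.})\cap(G\times W)}\delta_z = \nuW(\hT_g\hx)$, where one should note that $\nuW$ is well-defined on cosets because the set $(x+\LL)\cap(G\times W)$ depends only on $x+\LL$ and not on the chosen representative. This completes a).

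For b), observe that by definition $\MW$ is the vague closure of $\nuW(\hX)$, and $S_g:\cM\to\cM$ is a homeomorphism for each $g\in G$ (its inverse is $S_{-g}$), hence in particular continuous. Thus $S_g(\MW) = S_g(\overline{\nuW(\hX)}) \subseteq \overline{S_g(\nuW(\hX))}$. By part a), $S_g(\nuW(\hX)) = \nuW(\hT_g(\hX)) = \nuW(\hX)$, since $\hT_g$ is a bijection of $\hX$. Hence $S_g(\MW)\subseteq\overline{\nuW(\hX)} = \MW$. Applying the same inclusion with $-g$ in place of $g$ gives $S_{-g}(\MW)\subseteq\MW$, i.e.\ $\MW\subseteq S_g(\MW)$, and therefore $S_g(\MW)=\MW$.

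I do not anticipate a genuine obstacle here: the only point requiring a little care is checking that $\nuW$ is well-defined as a function of the coset $\hx$ (so that the expressions $\nuW(\hx)$ and $\nuW(\hT_g\hx)$ make sense independently of representatives), and that the $H$-coordinate of the window $W$ is untouched by the $G$-translation $T_g$ — both of which are immediate from the setup in Section~\ref{assnot}.
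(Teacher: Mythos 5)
Your proof is correct and follows essentially the same route as the paper: part a) by pushing the point masses forward under $T_g$ and using that $(g,0)+(G\times W)=G\times W$ and $T_g(x+\LL)$ represents $\hT_g\hx$, and part b) from a) together with continuity of $S_g$ and $S_g^{-1}=S_{-g}$ (the paper phrases the closure argument with convergent sequences, which is the same reasoning).
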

\begin{proof} For each measurable $A\subseteq G\times H$ we have
\begin{displaymath}
\begin{split}
S_g(\nuW(\hx))(A)
&=
\sum_{y\in (x+\LL)\cap(G\times W)}S_g(\delta_y)(A)
=
\sum_{y\in (x+\LL)\cap(G\times W)}\delta_{T_gy}(A)\\
&=
\sum_{y\in T_g((x+\LL)\cap(G\times W))}\delta_y(A)
=
\sum_{y\in (T_gx+\LL)\cap(G\times W)}\delta_y(A)\\
&=
\nuW(\hT_g\hx)(A)\ .
\end{split}
\end{displaymath}
As $S_g^{-1}=S_{-g}$ it suffices to prove that $S_g(\MW)\subseteq \MW$. So let $\nu\in \MW$ be the vague limit of the sequence $(\nuW(\hx^n))_n$. Then $\nuW(\hT_g\hx^n)=S_g\nuW(\hx^n)$ converges vaguely to $S_g\nu$, because $S_g$ is continuous.
\end{proof}

\begin{lemma}\label{lemma:projection-basic2}
Suppose that a sequence $(\nuW(x^n+\LL))_{n\in\N}$, $x^{n}\in G\times H$, converges vaguely to some $\nu\in \MW$.
\begin{enumerate}[a)]
\item If $\lim_{n\to\infty}(x^n+\LL)=\hx=(x+\LL)$, then $\nu=\varphi\cdot\nuW(\hx)$ with 
\begin{equation}\label{eq:project-nu-nux}
\varphi(z)
=
\begin{cases}
0&\text{ if }z\not\in x+\LL\\
0&\text{ if }z\in x+\LL\text{ and }z_H\not\in W\\
\lim_{n\to\infty}1_W((x^{n}+\ell)_H)&\text{ if $z=x+\ell$ for some $\ell\in \LL$ and }z_H\in\partial W \\1&\text{ if }z\in x+\LL\text{ and }z_H\in\inn(W)
\end{cases}
\end{equation}
\item If $\lim_{n\to\infty}(x^n+\LL)=\hx=(x+\LL)$ and $\nuW(\hx)(G\times \partial W)=0$, then $\nu=\nuW(\hx)$.
\item If $\nu$ is not the zero measure, then there exists $\hx\in \hX$ such that $\lim_{n\to\infty}(x^n+\LL)=\hx$.
\end{enumerate} 
\end{lemma}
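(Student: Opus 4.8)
The plan is to localise the vague convergence around each point of the coset $x+\LL$ and read off the atoms of $\nu$ one at a time; parts b) and c) then follow almost formally from a). The delicate point, which is also the main obstacle, is precisely this localisation: I must use the discreteness of $\LL$ to ensure that on any fixed compact set only finitely many atoms of the configurations $\nuW(x^n+\LL)$ are relevant and that they track the points $x+\ell$, $\ell\in\LL$.

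\emph{Set-up.} Let $V$ be the open zero neighbourhood from item~(3) of Subsection~\ref{en:ass}, so the translates $\overline V+\ell$, $\ell\in\LL$, are pairwise disjoint. The quotient homomorphism $\pihX$ is open and is injective on $x+V$: if $x+v,x+v'\in x+V$ lie in the same coset, then $\ell:=v-v'\in\LL$ and $v\in\overline V\cap(\overline V+\ell)$, forcing $\ell=0$. Hence $\pihX$ restricts to a homeomorphism of $x+V$ onto an open neighbourhood of $x+\LL$ in $\hX$, and since $\lim_n(x^n+\LL)=x+\LL$ I may replace each $x^n$ by the representative of its coset lying in $x+V$; this changes neither $\nuW(x^n+\LL)$ nor the displayed formula, and afterwards $x^n\to x$ in $G\times H$. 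I first record $\supp(\nu)\subseteq x+\LL$: the discrete set $x+\LL$ is closed, so for $w\notin x+\LL$ choose $f\in C_c(G\times H)$ with $f\geqslant0$, $f(w)>0$ and $\supp(f)\cap(x+\LL)=\emptyset$; only finitely many $\ell$ satisfy $(x+\ell+\overline V)\cap\supp(f)\neq\emptyset$, and for large $n$ each such $x^n+\ell$ lies in $x+\ell+V$, hence off $\supp(f)$, so $\int f\,d\nuW(x^n+\LL)=0$ eventually and thus $\int f\,d\nu=0$.

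\emph{Part a).} Fix $\ell\in\LL$ and pick $f_\ell\in C_c(G\times H)$ with $0\leqslant f_\ell\leqslant1$, $\supp(f_\ell)\subseteq x+\ell+V$ and $f_\ell\equiv1$ on $x+\ell+V'$ for some open zero neighbourhood $V'\subseteq V$. For $n$ large, $x^n+\ell\in x+\ell+V'$ while $x^n+\ell'\notin x+\ell+\overline V$ for all $\ell'\neq\ell$, by disjointness of the $\overline V+\ell''$; hence $\int f_\ell\,d\nuW(x^n+\LL)=1_W((x^n+\ell)_H)$. Vague convergence then forces $\lim_n 1_W((x^n+\ell)_H)$ to exist and to equal $\nu(\{x+\ell\})$. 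With $z_H=(x+\ell)_H$: if $z_H\notin W$ then, $W$ being closed, $(x^n+\ell)_H\notin W$ eventually and the limit is $0$; if $z_H\in\inn(W)$ then $(x^n+\ell)_H\in\inn(W)\subseteq W$ eventually and the limit is $1$; if $z_H\in\partial W=W\setminus\inn(W)$ the limit is $\varphi(x+\ell)$ by definition. Since $\supp(\nu)\subseteq x+\LL$ is discrete, $\nu=\sum_{\ell\in\LL}\nu(\{x+\ell\})\,\delta_{x+\ell}$, and as $\varphi$ vanishes off $x+\LL$ and off $G\times W$, the three cases combine to give $\nu=\varphi\cdot\nuW(\hx)$.

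\emph{Parts b) and c).} For b): $\nuW(\hx)(G\times\partial W)=0$ means no $\ell\in\LL$ has $(x+\ell)_H\in\partial W$, so every $\ell$ with $(x+\ell)_H\in W$ has $(x+\ell)_H\in\inn(W)$; thus $\varphi(x+\ell)=1$ there and $\varphi\cdot\nuW(\hx)=\nuW(\hx)$ by a). For c): assume $\nu\neq\0$. Since $\hX$ is compact and metrisable it suffices to show any two convergent subsequences of $(x^n+\LL)_n$ have the same limit. If $x^{n_k}+\LL\to x+\LL$ and $x^{m_j}+\LL\to y+\LL$, then both subsequences of $(\nuW(x^n+\LL))_n$ still converge vaguely to $\nu$, so part a) gives both $\supp(\nu)\subseteq x+\LL$ and $\supp(\nu)\subseteq y+\LL$; as $\nu\neq\0$ it carries an atom at some $z_0$, and $z_0\in(x+\LL)\cap(y+\LL)$ forces $x+\LL=z_0+\LL=y+\LL$. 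Hence $(x^n+\LL)_n$ converges, which proves c).
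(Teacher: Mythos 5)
Your proof is correct and follows essentially the same route as the paper's: reduce to representatives with $x^n\to x$, localise around each point $x+\ell$ using the separating zero neighbourhood $V$ so that only the tracking point $x^n+\ell$ of the $n$-th configuration is relevant there, and read off the atoms of $\nu$ from vague convergence (you via test functions $f_\ell$, the paper via the $\liminf$/$\limsup$ inequalities for open and compact sets), with b) and c) deduced exactly as in the paper. The only wrinkle is the compressed justification in your support step: for the finitely many $\ell$ with $(x+\ell+\overline V)\cap\supp(f)\neq\emptyset$, the inclusion $x^n+\ell\in x+\ell+V$ does not by itself put $x^n+\ell$ off $\supp(f)$ --- you need $x^n+\ell\to x+\ell\notin\supp(f)$ together with closedness of $\supp(f)$ --- but this is immediate from what you have already set up.
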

\begin{proof}
a) As $\lim_{n\to\infty}(x-x^n+\LL)=\LL$ (in $\hX$), there are $\ell^n\in\LL$ such that $\lim_{n\to\infty}(x-x^n+\ell^n)=0$ (in $G\times H$). Replacing all $x^n$ by $x^n+\ell^n$ in the lemma does not change its assumptions nor its assertions, because they are all formulated in terms of the $x^n+\LL$. Therefore we may assume that all $\ell^n=0$ and hence $\lim_{n\to\infty}x^n=x$.

 Let ${V}\subseteq G\times H$ be an open zero neighbourhood whose closure is compact and for which all sets $\overline {V}+\ell$ $(\ell\in \LL)$ are pairwise disjoint.
For each $z\in G\times H$ we fix some open neighbourhood $V_z\subseteq {V}+z$ of $z$ in the following way:
\begin{enumerate}[-]
\item If $z\not\in x+\LL$, then $\overline{V_z}\cap(x+\LL)=\emptyset$.
\item If $z\in x+\LL$ and $z_H\not\in W$, then $\overline{V_z}\cap(G\times W)=\emptyset$.
\item If $z\in x+\LL$ and $z_H\in\inn(W)$, then 
$\overline{V_z}\subseteq (G\times\inn(W))$.
\end{enumerate}
Finally we fix compact $z$-neighbourhoods $C_z\subseteq V_z$, which is always possible in a locally compact space.
Vague convergence implies that
\begin{displaymath}
\nu(V_z)
\leqslant
\liminf_{n\to\infty}\nuW(x^{n}+\LL)(V_z)
\quad\text{and}\quad
\nu(C_z)
\geqslant
\limsup_{n\to\infty}\nuW(x^{n}+\LL)(C_z)\ .
\end{displaymath}
It follows that 
\begin{displaymath}
\begin{split}
\nu(V_z)
&\leqslant
\liminf_{n\to\infty}
\sum_{y\in(x^n+\LL)\cap (G\times W)}\delta_y(V_z)\\
&=
\begin{cases}
0&\text{ if }z\not\in x+\LL\\
0&\text{ if }z\in x+\LL\text{ and }z_H\not\in W\\
1&\text{ if }z\in x+\LL\text{ and }z_H\in\inn(W)\\
\liminf_{n\to\infty}1_W((x^{n}+\ell)_H)&\text{ if $z=x+\ell$ for some $\ell\in \LL$ and }z_H\in\partial W 
\end{cases}\\
&\leqslant
1_{\{x+\LL\}}(z)\cdot 1_{G\times W}(z)\\
&=
\nuW(x+\LL)(V_z)
\end{split}
\end{displaymath}
and
\begin{displaymath}
\begin{split}
\nu(C_z)
&\geqslant
\limsup_{n\to\infty}\nuW(x^{n}+\LL)(C_z)
=
\limsup_{n\to\infty}
\sum_{y\in(x^n+\LL)\cap (G\times W)}\delta_y(C_z)\\
&=
\begin{cases}
0&\text{ if }z\not\in x+\LL\\
0&\text{ if }z\in x+\LL\text{ and }z_H\not\in W\\
1&\text{ if }z\in x+\LL\text{ and }z_H\in\inn(W)\\
\limsup_{n\to\infty}1_W((x^{n}+\ell)_H)&\text{ if $z=x+\ell$ for some $\ell\in \LL$ and }z_H\in\partial W 
\end{cases}
\end{split}
\end{displaymath}
As $\nuW(x+\LL)$ is a sum of isolated unit point masses in the set $x+\LL$, the combination of both inequalities implies $\nu=\varphi\cdot\nuW(x+\LL)$ with $\varphi$ from (\ref{eq:project-nu-nux}).
\\
b) Consider any $z=x+\ell$, $\ell\in \LL$, with $\nuW(\hx)(\{z\})=1$. We only need to show that $\varphi(z)=1$. But
$\nuW(\hx)(\{z\})=1$ implies by definition of $\nuW$ that $z_H\in W$. From $\nuW(\hx)(G\times \partial W)=0$ we conclude that 
$z_H\not\in\partial W$. Hence $z_H\in\inn(W)$, and $\varphi(z)=1$ in view of (\ref{eq:project-nu-nux}).\\
c) As $\hat X$ is compact, the sequence $(x^n+\LL)_n$ has a subsequence that converges to some $\hx\in\hX$. Applying assertion a) to such a subsequence we conclude that $\nu\leqslant\nuW(\hx)$.
As $\nu$ is different from the zero measure, this determines $\hx$ uniquely and independently of the initially chosen subsequence.
\end{proof}

\begin{lemma}\label{lemma:proj-homeo-1}
$\pihXG_*:\GMW\to\GMWG$ is a homeomorphism (that respects the fibres $(\pihX_*)^{-1}\{\hx\}$).
\end{lemma}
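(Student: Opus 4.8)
The plan is to present $\pihXG_*$ as a continuous bijection between compact metrisable spaces: since $\GMWG$ is Hausdorff, any such map is automatically a homeomorphism. Unravelling the definitions, $\pihXG_*$ acts on $\hX\times\cM$ by $(\hx,\nu)\mapsto(\hx,\piG_*\nu)$, i.e.\ it leaves the $\hX$-coordinate untouched and pushes the $\cM$-component forward under $\piG$. Consequently it maps the fibre $(\pihX_*)^{-1}\{\hx\}$ of $\GMW$ into the fibre $(\pihX_*)^{-1}\{\hx\}$ of $\GMWG$, which is the fibre-respecting property; once we know $\pihXG_*$ is a homeomorphism, it therefore restricts to a homeomorphism of each fibre onto the corresponding fibre. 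Continuity and surjectivity of $\pihXG_*:\GMW\to\GMWG$ are part of Lemma~\ref{lemma:diagrams-basic}, so the whole argument comes down to injectivity.

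For injectivity the first step is to describe which measures can occur as the second coordinate of a point of $\GMW$. Let $(\hx,\nu)\in\GMW$ and write $\hx=x+\LL$. Since $\hX$ is compact metrisable and $\cM$ is Polish, $\hX\times\cM$ is metrisable, so $(\hx,\nu)$ is the vague limit of a sequence $(\hx^n,\nuW(\hx^n))\in\GnuW$; in particular $\hx^n\to\hx$ in $\hX$. Picking representatives $x^n\in G\times H$ of $\hx^n$ and applying Lemma~\ref{lemma:projection-basic2}a (or, equivalently, Proposition~\ref{prop:projection-usc}a), we get $\nu=\varphi\cdot\nuW(\hx)$ for a function $\varphi$ taking only the values $0$ and $1$. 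Hence
\begin{equation*}
\nu=\sum_{y\in S}\delta_y,\qquad S:=\{y\in(x+\LL)\cap(G\times W):\varphi(y)=1\}\subseteq x+\LL\ ,
\end{equation*}
that is, every point of $\GMW$ lying over $\hx$ is a sub-point-configuration of $\nuW(\hx)$.

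Now I would exploit the injectivity of $\piG|_\LL$. As $x+\LL$ is a coset of $\LL$ and $\piG$ is a homomorphism, the restriction $\piG|_{x+\LL}$ is injective; therefore $\piG_*\nu=\sum_{y\in S}\delta_{\piG(y)}$ is a sum of pairwise distinct unit point masses, so $\supp(\piG_*\nu)=\piG(S)$ and
\begin{equation*}
S=\bigl(\piG|_{x+\LL}\bigr)^{-1}\bigl(\supp(\piG_*\nu)\bigr)\ .
\end{equation*}
Thus $S$, and hence $\nu$, is completely determined by $\hx$ together with $\piG_*\nu$. In particular, if $(\hx,\nu),(\hx,\nu')\in\GMW$ satisfy $\pihXG_*(\hx,\nu)=\pihXG_*(\hx,\nu')$, i.e.\ $\piG_*\nu=\piG_*\nu'$, then the two associated sets coincide, $S=S'$, whence $\nu=\nu'$. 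This gives injectivity, and with the opening remarks the lemma follows.

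I do not expect a genuine obstacle here; the one point that requires care is the input that every element of $\GMW$ is, fibrewise, a sub-point-configuration of $\nuW(\hx)$ with $\{0,1\}$-valued Radon--Nikodym derivative — which is exactly the content of Lemma~\ref{lemma:projection-basic2}a — together with the routine bookkeeping that $\piG_*\nu$ really is a locally finite measure on $G$, which is guaranteed by the compactness of $W$ and the uniform translation boundedness of the $\nuW(\hx)$ that are already in force.
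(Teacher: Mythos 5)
Your proposal is correct and follows essentially the same route as the paper: reduce to injectivity via continuity/surjectivity from Lemma~\ref{lemma:diagrams-basic} and compactness, use Lemma~\ref{lemma:projection-basic2}a to see that any $\nu$ in the fibre over $\hx=x+\LL$ is a $\{0,1\}$-weighted subconfiguration of $\nuW(\hx)$ supported in $x+\LL$, and then recover it from $\piG_*\nu$ using injectivity of $\piG|_\LL$. The only cosmetic difference is that you phrase the last step through the support of $\piG_*\nu$ on the coset $x+\LL$, while the paper indexes the subconfigurations by subsets $A_i\subseteq\LL$ and concludes $A_1=A_2$; these are the same argument.
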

\begin{proof}
As $\pihXG_*$ is a continuous surjective map between the compact space $\GMW$ and the Hausdorff space $\GMWG$ (Lemma~\ref{lemma:diagrams-basic}), only its injectivity needs to be shown.
So let $(\hx_1,\nu_1), (\hx_2,\nu_2)\in\GMW$ and assume that 
$\pihXG_*(\hx_1,\nu_1)=\pihXG_*(\hx_2,\nu_2)$, i.e., $(\hx_1,\piG_*\nu_1)=(\hx_2,\piG_*\nu_2)$, in particular $\hx:=\hx_1=\hx_2=x+\LL$. By Lemma~\ref{lemma:projection-basic2}a), $\nu_i=\varphi_i\cdot\nuW(\hx)$ $(i=1,2)$, so that
there are $A_1,A_2\subseteq\LL$ such that $\nu_i=\sum_{\ell\in A_i}\delta_{x+\ell}$ and hence
$\piG_*\nu_i=\sum_{\ell\in A_i}\delta_{\piG x+\piG\ell}=\sum_{g\in\piG(A_i)}\delta_{\piG x+g}$.
As $\piG_*\nu_1=\piG_*\nu_2$, it follows that $\piG(A_1)=\piG(A_2)$, and as $\piG|_\LL$ is injective, we conclude that $A_1=A_2$ and hence $\nu_1=\nu_2$.
\end{proof}

\begin{lemma}\label{lemma:projection-unique-nu-1}
For each $\nu\in \MW\setminus\{\0\}$ there is a unique $\hx\in\hX$ such that  $\nu\leqslant\nuW(\hx)$.
\end{lemma}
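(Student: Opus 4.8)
The plan is to prove existence and uniqueness separately, both building directly on Lemma~\ref{lemma:projection-basic2}. For \textbf{existence}: any $\nu\in\MW$ is by definition a vague limit of configurations $\nuW(\hx^n)$ with $\hx^n=x^n+\LL\in\hX$. If $\nu\ne\0$, part c) of Lemma~\ref{lemma:projection-basic2} (applied to any sequence $(x^n)$ representing $\nu$) shows that the representatives $x^n+\LL$ converge to a single point $\hx\in\hX$, and then part a) gives $\nu=\varphi\cdot\nuW(\hx)$ for the explicit $\{0,1\}$-valued density $\varphi$ of \eqref{eq:project-nu-nux}. Since $\varphi$ takes values in $\{0,1\}$, this immediately yields $\nu\leqslant\nuW(\hx)$, which is the existence claim.

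For \textbf{uniqueness}: suppose $\nu\leqslant\nuW(\hx)$ and $\nu\leqslant\nuW(\hy)$ with $\hx=x+\LL$, $\hy=y+\LL$, and $\nu\ne\0$. Pick a point $z\in\supp(\nu)$; such a $z$ exists because $\nu$ is a nonzero locally finite nonnegative measure with atomic support inside the relevant lattice translates. From $\nu\leqslant\nuW(\hx)$ we get $z\in(x+\LL)\cap(G\times W)$, in particular $z\in x+\LL$, i.e.\ $\hx=z+\LL$; likewise $\hy=z+\LL$. Hence $\hx=\hy$. This is really the whole argument — uniqueness is forced by the fact that $\supp(\nuW(\hx))\subseteq x+\LL$ and the cosets $x+\LL$ for distinct $\hx$ are disjoint, so a single atom of $\nu$ pins down the coset.

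There is essentially no hard step here; the lemma is an easy consequence of Lemma~\ref{lemma:projection-basic2}. The one point that needs a word of care is the passage in part c) of that lemma from ``a subsequence of $(x^n+\LL)$ converges'' to ``the whole sequence converges,'' but that is already handled inside Lemma~\ref{lemma:projection-basic2}c) by the observation that the limit $\hx$ is determined by the constraint $\nu\leqslant\nuW(\hx)$, which by the uniqueness we are proving here (or rather by the disjointness of cosets) does not depend on the chosen subsequence. So in writing the proof I would simply invoke Lemma~\ref{lemma:projection-basic2}c) for existence of $\hx$ with $\nu\leqslant\nuW(\hx)$, and then give the two-line coset argument for uniqueness. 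If anything, the mild subtlety is purely bookkeeping: making sure one does not need $\nu\in\MW$ beyond what Lemma~\ref{lemma:projection-basic2} already assumed (namely that $\nu$ is a vague limit of configurations), which it does not.

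Concretely, the proof reads: By definition of $\MW$ there is a sequence $x^n\in G\times H$ with $\nuW(x^n+\LL)\to\nu$ vaguely. Since $\nu\ne\0$, Lemma~\ref{lemma:projection-basic2}c) yields $\hx\in\hX$ with $x^n+\LL\to\hx$, and then Lemma~\ref{lemma:projection-basic2}a) gives $\nu=\varphi\cdot\nuW(\hx)\leqslant\nuW(\hx)$. For uniqueness, if also $\nu\leqslant\nuW(\hy)$ with $\hy=y+\LL$, choose $z\in\supp(\nu)$; then $z\in x+\LL$ and $z\in y+\LL$, so $\hx=z+\LL=\hy$.
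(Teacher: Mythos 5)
Your proposal is correct and follows essentially the same route as the paper: existence via Lemma~\ref{lemma:projection-basic2} (parts c) and a)) applied to a sequence $\nuW(\hx^n)\to\nu$, and uniqueness from the fact that any atom of $\nu$ lies in $x+\LL$ and thus pins down the coset. The paper merely states that the point $\hx$ is ``obviously unique,'' so your two-line coset argument just makes explicit what the paper leaves to the reader.
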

\begin{proof}
Each $\nu\in \MW$ is the limit of a sequence of measures $\nuW(\hx^n)$. If $\nu\neq\0$, there is $\hx\in\hX$ such that $\lim_{n\to\infty}\hx^n=\hx$ and $\nu\leqslant\nuW(\hx)$ by Lemma~\ref{lemma:projection-basic2}. This $\hx$ is obviously unique.
\end{proof}

\begin{definition}\label{def:hat-pi}
Given $\nu\in \MW\setminus\{\0\}$, we denote the unique $\hx\in\hX$ with $\nu\leqslant\nuW(\hx)$ by $\hat\pi(\nu)$. This defines a map $\hat\pi:\MW\setminus\{\0\}\to\hX\setminus \ZW$.
\end{definition}

\begin{lemma}\label{lemma:projection-hatpi}
$\hat\pi$ is continuous on $\MW\setminus\{\0\}$ and $\hat\pi\circ S_g=\hT_g\circ\hat\pi$ for all $g\in G$.
\end{lemma}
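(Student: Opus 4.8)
The plan is to prove the equivariance directly from the defining inequality, and then to obtain continuity by characterising $\hat\pi(\nu)$ as the first coordinate of the unique point of the compact space $\GMW$ lying above $\nu$.

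\emph{Equivariance.} I would first fix $\nu\in\MW\setminus\{\0\}$ and write $\hx:=\hat\pi(\nu)$, so that $\nu\leqslant\nuW(\hx)$ by Definition~\ref{def:hat-pi}. Since $(S_g\mu)(A)=\mu(T_{-g}A)$, the map $S_g$ is order preserving on $\cM$, hence $S_g\nu\leqslant S_g\nuW(\hx)=\nuW(\hT_g\hx)$ by Lemma~\ref{lemma:basic-0}a. Moreover $S_g\nu\in\MW$ by Lemma~\ref{lemma:basic-0}b, and $S_g\nu\neq\0$ because $S_g$ is a bijection of $\cM$ fixing $\0$. The uniqueness statement in Lemma~\ref{lemma:projection-unique-nu-1} then forces $\hat\pi(S_g\nu)=\hT_g\hx=\hT_g\hat\pi(\nu)$, which is the asserted intertwining relation.

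\emph{A characterisation of $\hat\pi$ via $\GMW$.} The key auxiliary claim I would establish is that for each $\nu\in\MW\setminus\{\0\}$ the pair $(\hat\pi(\nu),\nu)$ is the \emph{unique} element of $\GMW$ whose second coordinate equals $\nu$. For existence, pick a sequence $(\hx^k)_k$ in $\hX$ with $\nuW(\hx^k)\to\nu$ vaguely; since $\nu\neq\0$, Lemma~\ref{lemma:projection-basic2}c (applied to representatives of the $\hx^k$) produces $\hx\in\hX$ with $\hx^k\to\hx$, whence $(\hx,\nu)=\lim_k(\hx^k,\nuW(\hx^k))\in\GMW$, while Lemma~\ref{lemma:projection-basic2}a gives $\nu\leqslant\nuW(\hx)$, so $\hx=\hat\pi(\nu)$ by Lemma~\ref{lemma:projection-unique-nu-1}. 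For uniqueness, suppose $(\hx',\nu)\in\GMW$ and choose $\hat z^m\in\hX$ with $(\hat z^m,\nuW(\hat z^m))\to(\hx',\nu)$; then $\hat z^m\to\hx'$, and Lemma~\ref{lemma:projection-basic2}a yields $\nu\leqslant\nuW(\hx')$, so again $\hx'=\hat\pi(\nu)$.

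\emph{Continuity.} Let $\nu^k\to\nu$ in $\MW\setminus\{\0\}$ and set $\hx^k=\hat\pi(\nu^k)$. By the claim, $(\hx^k,\nu^k)\in\GMW$ for every $k$, and $\GMW$ is compact, so every subsequence of $(\hx^k,\nu^k)_k$ has a further subsequence converging to some $(\hy,\mu)\in\GMW$; continuity of the second-coordinate projection forces $\mu=\nu$, and the uniqueness part of the claim gives $\hy=\hat\pi(\nu)$. Hence every subsequence of $(\hx^k)_k$ has a further subsequence converging to $\hat\pi(\nu)$, and since $\hX$ is compact metrisable this means $\hx^k\to\hat\pi(\nu)$, i.e.\ $\hat\pi(\nu^k)\to\hat\pi(\nu)$. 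The only mildly delicate point is that $\GMW\setminus(\hX\times\{\0\})$ is not compact, so one cannot merely invoke ``a continuous bijection off a compact space is a homeomorphism''; the subsequence argument, combined with the compactness of the \emph{full} space $\GMW$ and with Lemma~\ref{lemma:projection-basic2}, is exactly what circumvents this, and the remainder is routine bookkeeping.
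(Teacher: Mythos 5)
Your proof is correct. The equivariance half is essentially the paper's own argument: $S_g$ preserves the order on $\cM$, $S_g\nuW(\hx)=\nuW(\hT_g\hx)$ by Lemma~\ref{lemma:basic-0}, and uniqueness in Lemma~\ref{lemma:projection-unique-nu-1} pins down $\hat\pi(S_g\nu)$. For continuity, however, you take a genuinely different route. The paper argues directly with vague convergence: assuming $\hat\pi(\nu^n)$ stays outside a neighbourhood $\hat U$ of $\hat\pi(\nu)$, it chooses an open $U\subseteq G\times H$ projecting into $\hat U$ with $\nu(U)>0$, notes that $\nu^n(U)=0$ because $\supp(\nu^n)$ lies in the coset determined by $\hat\pi(\nu^n)$, and contradicts the lower semicontinuity of open-set masses under vague convergence --- three lines, using nothing beyond the definition of $\hat\pi$. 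You instead characterise $(\hat\pi(\nu),\nu)$ as the unique point of $\GMW$ above $\nu\neq\0$ (via Lemma~\ref{lemma:projection-basic2}a,c and Lemma~\ref{lemma:projection-unique-nu-1}) and then run a subsequence argument in the compact metrisable space $\GMW$. This is the more structural viewpoint: it makes explicit that $\hat\pi=\pihX_*\circ(\piGH_*)^{-1}$ on $\MW\setminus\{\0\}$, which the paper only records in passing before Definition~\ref{def:hat-pi}, and it shows that this partial inverse of $\piGH_*$ is continuous even without $\inn(W)\neq\emptyset$ (where $\piGH_*$ need not be a homeomorphism, cf.\ Proposition~\ref{prop:projection-homeo}b); your closing remark about why one cannot simply quote the compact-bijection argument on the non-compact set $\GMW\setminus(\hX\times\{\0\})$ is exactly the right caveat, and the subsequence argument handles it. The price is length and reliance on compactness of $\GMW$; the paper's proof is shorter and needs only vague convergence.
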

\begin{proof}
Suppose for a contradiction that $\nu,\nu^n$ are in $\MW\setminus\{\0\}$ and $\nu^n\to\nu$ vaguely, but there exists $U\subseteq G\times H$ such that its image $\hat U\subseteq \hX$ under the quotient map is an open neighbourhood of $\hat\pi(\nu)$ satisfying $\hat\pi(\nu^n)\not\in{\hat U}$ for all $n$. Then $\nu^n( U)=0$ for all $n$ but $\nu(U)>0$, which contradicts the vague convergence. Hence $\hat\pi$ is continuous.
Now let $\nu\in \MW\setminus\{\0\}$ and $g\in G$. As $\nu\leqslant\nuW(\hat\pi(\nu))$, we have
$S_g\nu\leqslant S_g\nuW(\hat\pi(\nu))=\nuW(\hT_g\hat\pi(\nu))$. Hence $\hat\pi(S_g\nu)=\hT_g\hat\pi(\nu)$.
\end{proof}

\section{Zeros and continuity: Proof of Proposition~\ref{prop:projection-usc}}

Some of the observations of this section appear in various disguises in the literature, see \cite[Remark 7.4]{BaakeGrimm13} and the references given there. For example, the Baire argument as in the proof of Proposition~\ref{prop:projection-usc} was used e.g.~in \cite{Schlottmann1993, bms98, Schlottmann00, BaakeMoody2004, BLM07, HuckRichard15}. In the literature on invariant graphs for skew product transformations, the Baire argument is traditionally used to prove that the set of zeros or the set of continuity points of the graph is a dense $G_\delta$, see e.g.~\cite{Keller1996,Stark2003,Jager2003}. Implicitly this observation is already contained in \cite{Herman1983}. 

\begin{lemma}\label{lemma:projection-CW}
Let $x\in G\times H$. Then $x_H\in\bigcap_{\ell\in \LL}((\partial W)^c-\ell_H)$ if and only if $(x+\LL)\in \CW$. Equivalently:
\begin{equation*}
\CW=\pihX\bigg((\piH)^{-1}\bigg(\bigcap_{\ell\in \LL}((\partial W)^c-\ell_H)\bigg)\bigg)
\end{equation*}
\end{lemma}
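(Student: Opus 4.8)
The plan is to characterise the continuity points of $\nuW$ directly in terms of the relative position of the window boundary and the lifted lattice, and then translate the answer into the stated formula via the quotient map.

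First I would fix $x\in G\times H$ and analyse when $\hx=x+\LL$ is a continuity point. By Remark~\ref{rem:CW}a), $\hx\in\CW$ if and only if $(\pihX_*)^{-1}\{\hx\}\cap\GMW=\{(\hx,\nuW(\hx))\}$, equivalently (via Proposition~\ref{prop:projection-usc}b)) every $\nu\in\MW$ with $\nu\leqslant\nuW(\hx)$ equals $\nuW(\hx)$. Now invoke Lemma~\ref{lemma:projection-basic2}a): any such $\nu$ arises as a vague limit of $\nuW(x^n+\LL)$ with $x^n+\LL\to\hx$, and then $\nu=\varphi\cdot\nuW(\hx)$ where $\varphi$ is forced to equal $1$ on points $x+\ell$ with $(x+\ell)_H\in\inn(W)$, equal $0$ off $x+\LL$ and off $G\times W$, and on points $x+\ell$ with $(x+\ell)_H\in\partial W$ the value $\varphi(x+\ell)$ can be any limit of $1_W((x^n+\ell)_H)$. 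So the only freedom in choosing $\nu<\nuW(\hx)$ comes from lattice points $x+\ell$ whose $H$-component lies on $\partial W$.

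The key step is then: if there is some $\ell\in\LL$ with $x_H+\ell_H\in\partial W$, then — using that $\piH(\LL)$ is dense in $H$ and that $\partial W$ has empty interior in the sense that points arbitrarily close to $x_H+\ell_H$ can be chosen outside $W$ (this needs a small argument: $x_H+\ell_H\in\partial W=\overline W\setminus\inn(W)$, so every neighbourhood meets $W^c$; combine with denseness of $\piH(\LL)$ to produce $x^n$ with $x^n+\LL\to\hx$ and $1_W((x^n+\ell)_H)=0$ along a subsequence) — one builds a configuration $\nu\lneq\nuW(\hx)$ in $\MW$, so $\hx\notin\CW$. Conversely, if $x_H+\ell_H\notin\partial W$ for all $\ell\in\LL$, then every lattice point $x+\ell$ has $(x+\ell)_H$ either in $\inn(W)$ or in $W^c$, so $\varphi$ is completely determined, forcing $\nu=\nuW(\hx)$; hence $\hx\in\CW$. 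This establishes $\hx\in\CW\iff x_H\notin\bigcup_{\ell\in\LL}(\partial W-\ell_H)\iff x_H\in\bigcap_{\ell\in\LL}((\partial W)^c-\ell_H)$.

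Finally I would assemble the set identity. The condition on $x_H$ depends only on the coset $x+\LL$ (replacing $x$ by $x+\ell_0$ permutes the union over $\ell$), so the set $\{x\in G\times H: x_H\in\bigcap_\ell((\partial W)^c-\ell_H)\}$ is $\LL$-saturated and equals $(\piH)^{-1}\big(\bigcap_{\ell\in\LL}((\partial W)^c-\ell_H)\big)$; pushing it forward under $\pihX$ yields exactly $\CW$, which is the displayed formula. The main obstacle I expect is the forward direction of the equivalence: carefully producing, when $x_H+\ell_H\in\partial W$, an admissible approximating sequence $x^n$ (respecting $x^n+\LL\to\hx$) that makes $1_W$ oscillate at the relevant lattice point while leaving the values at interior points untouched — this is where denseness of $\piH(\LL)$ and the structure of $\partial W$ (closed with empty interior intersection with $\inn W$) genuinely enter, and one must be slightly careful that the resulting limit measure is indeed in $\MW$ and strictly below $\nuW(\hx)$.
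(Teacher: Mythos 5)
Your core argument is the same as the paper's: the forward direction rests on Lemma~\ref{lemma:projection-basic2} (vague limits along sequences converging to $\hx$ are of the form $\varphi\cdot\nuW(\hx)$ with $\varphi$ determined off $G\times\partial W$), and the converse rests on perturbing $x_H$ so that a lattice point sitting over $\partial W$ leaves the window. Two remarks on how you package this. First, in the converse you do not need denseness of $\piH(\LL)$ at all: since $x_H+\ell_H\in\partial W$ and $W$ is closed, you may simply choose $x_H^n\to x_H$ with $x_H^n+\ell_H\in W^c$ and set $x^n=(x_G,x_H^n)$; then $x^n+\LL\to\hx$, while $\nuW(x^n+\LL)$ gives no mass to a small neighbourhood of $x+\ell$ to which $\nuW(\hx)$ gives mass $1$, so $\nuW(x^n+\LL)\not\to\nuW(\hx)$ and $\hx\notin\CW$ follows directly from the definition of $\CW$. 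There is no need to extract a limit configuration $\nu\lneq\nuW(\hx)$ and then appeal to Remark~\ref{rem:CW}a or Proposition~\ref{prop:projection-usc}b; indeed, if the extracted $\nu$ happened to be $\0$, that appeal would require the delicate direction of that equivalence.

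Second, in the forward direction your step that ``any $\nu\in\MW$ with $\nu\leqslant\nuW(\hx)$ arises as a vague limit of $\nuW(x^n+\LL)$ with $x^n+\LL\to\hx$'' is only justified by Lemma~\ref{lemma:projection-basic2}c when $\nu\neq\0$; the case $\nu=\0$ is left open. It can be patched: under your hypothesis, if $\nuW(\hx)\neq\0$ then some lattice point lies over $\inn(W)$, so $\inn(W)\neq\emptyset$ and hence $\0\notin\MW$ by Proposition~\ref{prop:projection-usc}e, whose proof is independent of the present lemma; if $\nuW(\hx)=\0$ there is nothing to show. Do not patch it instead with the density of $\ZW$ (Proposition~\ref{prop:projection-usc}d), since that is proved in the paper \emph{using} this lemma, which would be circular. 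The cleaner route, which is the paper's, avoids the detour altogether: the hypothesis gives $\nuW(\hx)(G\times\partial W)=0$, and Lemma~\ref{lemma:projection-basic2}b then yields $\nuW(x^n+\LL)\to\nuW(\hx)$ for every sequence $x^n+\LL\to\hx$, i.e.\ $\hx\in\CW$ by definition. Your final reduction of the displayed set identity to the pointwise equivalence is fine.
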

\begin{proof}
Let $x+\LL,x^n+\LL\in\hX$, $\lim_{n\to\infty}(x^n+\LL)=(x+\LL)$. Assume that $x_H\in\bigcap_{\ell\in \LL}((\partial W)^c-\ell_H)$, i.e., $(x+\ell)_H\not\in\partial W$ for all $\ell\in \LL$. Then $\nuW(x+\LL)(G\times \partial W)=0$, and Lemma~\ref{lemma:projection-basic2} implies that $\lim_{n\to\infty}\nuW(x^n+\LL)=\nuW(x+\LL)$. Hence $x+\LL\in \CW$.

Conversely, if $x_H\not\in\bigcap_{\ell\in \LL}((\partial W)^c-\ell_H)$, then there is $\ell\in \LL$ such that $x_H+\ell_H\in\partial W$. Hence there are $x^n_H\in W^c-\ell_H$ such that $x_H^n\to x_H$ as $n\to\infty$. Let $x^n=(x_G,x_H^n)$. Then, for each sufficiently small open neighbourhood $U$ of $x+\ell$ in $G\times H$ and sufficiently large $n$ we have $\nuW(x^n+\LL)(U)=1_W(x_H^n+\ell_H)\cdot\delta_{x_n+\ell}(U)=0$ while 
$\nuW(x+\LL)(U)=1_W(x_H+\ell_H)\cdot\delta_{x+\ell}(U)=1$ so that $\nuW(x^n+\LL)\not\to\nuW(x+\LL)$. In particular, $(x+\LL)\not\in \CW$.
\end{proof}

\begin{lemma}\label{lemma:int(CW)}
$\inn(\CW)=\emptyset$ if and only if $\partial W\neq\emptyset$.
\end{lemma}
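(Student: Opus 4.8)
The plan is to exploit the explicit description of $\CW$ given in Lemma~\ref{lemma:projection-CW} together with the density of $\piH(\LL)$ in $H$ (assumption~(2)). Recall that $\CW = \pihX\big((\piH)^{-1}(\bigcap_{\ell\in\LL}((\partial W)^c-\ell_H))\big)$. Since $\pihX$ is an open map and, up to the translation-invariant structure, $\CW$ is essentially the full preimage under $\piH$ of the set $R := \bigcap_{\ell\in\LL}((\partial W)^c - \ell_H) = \{h\in H : (h+\piH(\LL))\cap\partial W=\emptyset\}$, it suffices to show that $\inn(R)=\emptyset$ in $H$ precisely when $\partial W\neq\emptyset$. (One should first note $\inn(\CW)=\emptyset$ in $\hX$ iff $\inn(R)=\emptyset$ in $H$, using that $\pihX$ is open and $\piH$ is continuous and surjective onto its dense image; care is needed because $\piH$ is not a quotient map, but the fact that $\CW$ is $\hT$-invariant and saturated with respect to the $\piH$-fibres, combined with openness of $\pihX$, gives the equivalence.)

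For the ``if'' direction, assume $\partial W\neq\emptyset$ and pick $w\in\partial W$. Suppose for contradiction that $R$ contains a nonempty open set $U\subseteq H$. Since $\piH(\LL)$ is dense in $H$, there exists $\ell\in\LL$ with $-w+\ell_H \in -U$, equivalently $w - \ell_H \in U$; wait — more directly: choose $h_0\in U$ and $\ell\in\LL$ with $\ell_H$ close enough to $w - h_0$ that $h_0 + \ell_H \in$ any prescribed neighbourhood of $w$. Taking that neighbourhood small and using that $U$ is open, we can in fact find $h\in U$ with $h+\ell_H = w \in \partial W$ — more carefully: the set $\{w - \ell_H : \ell\in\LL\}$ is dense in $H$, hence meets the open set $U$, so there is $\ell\in\LL$ with $w-\ell_H\in U\subseteq R$; but then $(w-\ell_H)+\ell_H = w\in\partial W$, contradicting $w-\ell_H\in R$. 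Hence $\inn(R)=\emptyset$.

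For the ``only if'' direction (contrapositive), assume $\partial W=\emptyset$. Then $(\partial W)^c = H$, so $R = H$, whence $\CW = \hX$ and $\inn(\CW)=\hX\neq\emptyset$ (note $\hX$ is nontrivial under our standing assumptions, or at any rate $\inn(\hX)=\hX\neq\emptyset$ trivially). This direction is immediate.

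The main obstacle I anticipate is the bookkeeping in the first reduction step, namely rigorously transferring ``empty interior'' between $\hX$ and $H$ through the non-quotient map $\piH$: one must use that $\CW = \pihX(\piH^{-1}(R))$ with $\piH^{-1}(R)$ a $\piH$-saturated subset of $G\times H$, that $\pihX$ is an open surjection, and that openness of a subset of $\hX$ pulls back to openness in $G\times H$; combined with the product structure $G\times H$ and the fact that $R$ controls only the $H$-coordinate, this yields $\inn_\hX(\CW)\neq\emptyset \iff \inn_{G\times H}(\piH^{-1}(R))\neq\emptyset \iff \inn_H(R)\neq\emptyset$. Once this equivalence is in place, the two directions above are short density arguments.
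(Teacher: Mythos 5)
Your proposal is correct and follows essentially the same route as the paper: both directions rest on the description of $\CW$ from Lemma~\ref{lemma:projection-CW} (so $\partial W=\emptyset$ gives $\CW=\hX$ immediately), and the converse is the same contradiction argument, transferring a nonempty open set from $\inn(\CW)$ through $(\pihX)^{-1}$ and the open map $\piH$ into $R=\bigcap_{\ell\in\LL}((\partial W)^c-\ell_H)$ and then using density of $\piH(\LL)$ to hit a point of $\partial W$. The two-sided equivalence $\inn_\hX(\CW)\neq\emptyset\iff\inn_H(R)\neq\emptyset$ that you worry about is more than is needed; only the forward implication is used, and it follows exactly as in the paper from continuity and surjectivity of $\pihX$ together with openness of $\piH$.
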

\begin{proof}
If $\partial W=\emptyset$, then $\CW=\hX$ by Lemma~\ref{lemma:projection-CW}.
Conversely, let $\partial W\neq\emptyset$ and
suppose for a contradiction that $\inn(\CW)\neq\emptyset$. Then $(\pihX)^{-1}( \CW)\subseteq G\times H$ has non-empty interior, so that $\piH((\pihX)^{-1}(\CW))$ contains a non-empty open set $U$, as $\piH$ is open.
Because of Lemma~\ref{lemma:projection-CW}, $U\subseteq\bigcap_{\ell\in \LL}((\partial W)^c-\ell_H)$. This implies that $\bigcup_{\ell\in\LL}(U+\ell_H)\subseteq(\partial W)^c$.
But as $\{\ell_H:\ell\in\LL\}=\piH(\LL)$ is dense in $H$ and as $\partial W\neq\emptyset$,
this is impossible.
\end{proof}

\begin{lemma}\label{lemma:projection-ZC}
If $A\subseteq \GMW$ is a non-empty closed $S$-invariant subset, then $A\supseteq\overline{\cG({\nuW}|_{\CW})}$, the $S$-invariant closure of the graph of the restriction of $\nuW$ to the set $\CW$.
\end{lemma}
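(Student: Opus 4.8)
The idea is to show that every point of $\cG({\nuW}|_{\CW})$ lies in $A$; since $A$ is closed and $S$-invariant, this immediately gives $A\supseteq\overline{\cG({\nuW}|_{\CW})}$. So fix $\hx\in\CW$; the goal is to produce some $(\hy,\mu)\in A$ and then translate it by the $\hT$-action until it lands on $(\hx,\nuW(\hx))$, using minimality of $(\hX,\hT)$ together with the continuity of $\nuW$ at $\hx$ and the upper semicontinuity from Proposition~\ref{prop:projection-usc}.

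First I would pick any point $(\hy,\mu)\in A$ (possible since $A\ne\emptyset$). By minimality of $\hT$ on $\hX$ (item (5) of Subsection~\ref{en:ass}) there is a sequence $g_n\in G$ with $\hT_{g_n}\hy\to\hx$. Consider the translated points $S_{g_n}(\hy,\mu)=(\hT_{g_n}\hy,\,S_{g_n}\mu)\in A$, using $S$-invariance of $A$. Since $A\subseteq\GMW$ is compact, pass to a subsequence along which $S_{g_n}\mu$ converges vaguely to some $\nu$, so $(\hx,\nu)\in A$. Now I claim $\nu=\nuW(\hx)$. On one hand, $(\hx,\nu)\in\GMW$ together with $\hx\in\CW$ forces $\nu=\nuW(\hx)$ directly by Remark~\ref{rem:CW}a), which says $(\pihX_*)^{-1}\{\hx\}\cap\GMW=\{(\hx,\nuW(\hx))\}$ exactly when $\hx\in\CW$. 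Hence $(\hx,\nuW(\hx))\in A$. This shows $\cG({\nuW}|_{\CW})\subseteq A$, and taking the $S$-invariant closure (which stays inside $A$ because $A$ is closed and $S$-invariant) yields the claim.

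Strictly speaking one should first check that $\CW\ne\emptyset$, so that the statement is not vacuous and the argument above has something to work with — but this is immediate from Proposition~\ref{prop:projection-usc}c), which asserts $\CW$ is a dense $G_\delta$. The main (and essentially only) subtlety is the invocation of Remark~\ref{rem:CW}a): one must be sure that membership $(\hx,\nu)\in\GMW$ combined with $\hx$ being a continuity point of $\nuW$ really does pin down $\nu$, and this is precisely the content of that remark (equivalently, of Proposition~\ref{prop:projection-usc}b) via the fibre description). Everything else — extracting a convergent subsequence by compactness, using $S$-invariance of $A$, and closing up — is routine. I do not expect any genuine obstacle here; the lemma is really a packaging of minimality of $(\hX,\hT)$ plus the continuity-point characterisation of the fibres of $\GMW$.
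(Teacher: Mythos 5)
Your proof is correct and follows essentially the same route as the paper: both arguments reduce the lemma to minimality of $(\hX,\hT)$ together with the singleton-fibre description of continuity points in Remark~\ref{rem:CW}a), and then use closedness of $A$. The only cosmetic difference is that you unpack the minimality step via a sequence $\hT_{g_n}\hy\to\hx$ and compactness of $A$, whereas the paper observes directly that $\pihX_*(A)$ is a non-empty closed $\hT$-invariant set and hence equals $\hX$.
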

\begin{proof}
The $S$-invariance of $\overline{\cG({\nuW}|_{\CW})}$ holds since $\CW$ is $\hT$-invariant, compare the proof of Lemma~\ref{lemma:basic-0}b.  As $A\neq\emptyset$ is $S$-invariant, the set $\pihX_*(A)\neq\emptyset$ is $\hT$-invariant, and as $\hT$ is minimal, $\pihX_*(A)=\hX\supseteq \CW$.
Because of Remark~\ref{rem:CW}, $(\pihX_*)^{-1}\{\hx\}=\{(\hx,\nuW(\hx))\}$ for each $\hx\in \CW$.
Hence
$\cG({\nuW}|_{\CW})\subseteq A$, and as $A$ is closed, the lemma is proved.
\end{proof}

\paragraph{Proof of Proposition~\ref{prop:projection-usc}}\quad\\
a) This assertion is contained in Lemma~\ref{lemma:projection-basic2}.\\
b) Because of a), $\hx\in \CW$ if and only if $\{\nu\in \MW: \nu\leqslant\nuW(\hx)\}=\{\nuW(\hx)\}$.\\
c) We show that $\CW$ is a dense $G_\delta$-set: As $\inn(\partial W)=\emptyset$, the closed set $\partial W$ is nowhere dense and so are all translates $(\partial W)-\ell_H$ for $\ell\in \LL$. As $H$ is a Baire space, the set $\bigcap_{\ell\in \LL}((\partial W)^c-\ell_H)$ is a dense $G_\delta$-set in $H$.
As $\piH$ is continuous and open, $(\piH)^{-1}\left(\bigcap_{\ell\in \LL}((\partial W)^c-\ell_H)\right)$ is a dense $G_\delta$-set in $G\times H$. As the quotient map onto $\hX$ is continuous open, we conclude with  Lemma~\ref{lemma:projection-CW} that $\CW$ is a dense $G_\delta$-set in $\hX$.
\footnote{The sets $U_\ell:=(\piH)^{-1}((\partial W)^c-\ell_H)$ are open and dense in $G\times H$. As $\bigcap_\ell U_\ell$ is invariant under translations by $\ell\in\LL$
 and as $\pihX|_{X}$ is bijective, we have $\pihX(\bigcap_\ell U_\ell)=\pihX(X\cap\bigcap_\ell U_\ell)=\bigcap_\ell\pihX(X\cap U_\ell)=\bigcap_\ell\pihX(U_\ell)$. As $\pihX$ is open, this set is $G_\delta$, and as $\pihX$ is continuous and onto, it is also dense.
}
\\
d) Assume now that $\inn(W)=\emptyset$. Then $W^c=(\partial W)^c$. By definition, $\hx=x+\LL\in \ZW$ if and only if $(x+\LL)\cap(G\times W)=\emptyset$, so that $\hx=x+\LL\in \ZW$ if and only if $x_H\in\bigcap_{\ell\in \LL}(W^c-\ell_H)=\bigcap_{\ell\in \LL}((\partial W)^c-\ell_H)$. Hence $\ZW=\CW$ by Lemma~\ref{lemma:projection-CW}. 
It remains to show that $\0\in\overline{\nuW(\hX\setminus Z_W)}$ if $W\ne\emptyset$. Pick some $\hx\in \ZW$, which is possible since $\CW=\ZW$ is dense in the nonempty set $\hX$.
As $\inn(W)=\emptyset$, we have $\partial W=W\neq\emptyset$. Therefore, Lemma~\ref{lemma:int(CW)} shows that $\inn(Z_W)=\inn(\CW)=\emptyset$. Hence we find $\hx_n\in\hX\setminus Z_W$ with $\hx_n\to\hx$.
It follows that $\0=\nuW(\hx)=\lim_{n\to\infty}\nuW(\hx_n)$, i.e., $\0\in\overline{\nuW(\hX\setminus Z_W)}$.
 \\
e) Let $\inn(W)\neq\emptyset$ and assume for a contradiction that $\0\in \MW$. Then there are $x^n\in X$ such that $\lim_{n\to\infty}\nuW(x^n+\LL)=\0$. As $\piH(\LL)$ is dense in $H$ and as $\piH(X)$ is relatively compact in $H$, there is a finite set $\LL_0\subseteq \LL$ such that
$\piH(X)\subseteq\bigcup_{\ell\in \LL_0}(W-\ell_H)$. Let $Q$ be a compact subset of $G\times H$ that contains all sets $X+\ell$, $\ell\in \LL_0$. As $Q$ is compact,
$\limsup_{n\to\infty}\nuW(x^n+\LL)(Q)\leqslant0$,
so that there is $n_0\in\N$ such that $\nuW(x^n+\LL)(Q)=0$ for all $n\geqslant n_0$.
As $x_n+\ell\in X+\ell\subseteq Q$ for all $n$ and all $\ell\in \LL_0$, this implies that $(x^n+\ell)_H\not\in W$ for $n\geqslant n_0$ and $\ell\in \LL_0$, i.e., $\piH(x^n)\not\in\bigcup_{\ell\in \LL_0}(W-\ell_H)$ for all $n\geqslant n_0$. As $x^n\in X$, this contradicts the above choice of $\LL_0$.

\section{The projected system}\label{sec:projected}

\begin{lemma}\label{lemma:projection-piG1}
\begin{enumerate}[a)]
\item All measures in $\MWG$ are uniformly locally finite. 
\item $\piG_*:\MW\to \MWG$ is continuous.
\item $\MWG$ is vaguely compact.
\item For any $x,x'\in G\times H$, either $\piG_*\nuW(x+\LL)\perp\piG_*\nuW(x'+\LL)$ or $\piG(x'-x)\in\piG(\LL)$.
\end{enumerate}
\end{lemma}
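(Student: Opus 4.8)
The plan is to prove the four items more or less in the order stated, since each is a routine consequence of either translation boundedness of the configurations $\nuW(\hx)$ (already recorded in the setting, via \cite[Theorem 2]{BaakeLenz2004}) or of the injectivity of $\piG|_\LL$ together with metric transitivity of the $\LL$-action on $H$. First I would fix, as in consequence~(3) of Section~\ref{en:ass}, an open zero neighbourhood $V\subseteq G\times H$ with compact closure such that the translates $\overline V+\ell$ $(\ell\in\LL)$ are pairwise disjoint; since $\supp(\nuW(\hx))\subseteq\hx$ is a translate of (a subset of) $\LL$, each $\nuW(\hx)$ puts at most mass $1$ on any translate of $V$, so $\nuW(\hx)(K)\leqslant N(K)$ for every compact $K$, where $N(K)$ is the number of $\LL$-translates of $V$ meeting $K$ --- a bound uniform in $\hx$. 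This gives uniform local finiteness of $\{\nuW(\hx):\hx\in\hX\}$, and passing to vague limits the same bound survives, which is item a) after noting that $\piG$ pushes $V$ to an open set $\piG(V)\subseteq G$ meeting the relevant estimates; equivalently one invokes directly that $\piG_*$ is a continuous map into locally finite measures (item b), proved below) and $\MWG=\piG_*(\MW)$ with $\MW$ compact.

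For item b), continuity of $\piG_*:\cM\to\cMG$ is a general fact about pushforward under a proper continuous map, but $\piG$ is \emph{not} proper, so I would argue instead using the uniform translation boundedness: for $f\in C_c(G)$ the function $f\circ\piG$ is not compactly supported on $G\times H$, but one can write $\int f\,d(\piG_*\nu)=\int (f\circ\piG)\,d\nu$ and restrict the integral to $\supp(f)\times H$, which intersects $\supp(\nuW(\hx))$ in a uniformly bounded number of points because $\piG|_\LL$ is \oneone\ and $\overline{\piH(X)}$ is compact (so only finitely many fibres of $\piG$ over $\supp(f)$ can carry mass). Concretely, choose a finite $\LL_0\subseteq\LL$ with $\supp(f)\times H$ meeting $X+\ell$ only for $\ell\in\LL_0$; then $\int f\,d(\piG_*\nu)=\sum_{\ell\in\LL_0}\nu(\{x+\ell\})f(\piG(x+\ell))$ depends vaguely-continuously on $\nu$. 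Item c) is then immediate: $\MWG=\piG_*(\MW)$ is the continuous image of the compact set $\MW$ (compactness of $\MW$ was recorded in the setting), hence vaguely compact, and a continuous-image argument also re-proves a).

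The substantive point, and the one I expect to be the main obstacle, is item d). Here I would argue as follows: if $\piG(x'-x)\notin\piG(\LL)$, I must show the two projected Dirac combs live on disjoint subsets of $G$. Suppose not, so some $g\in G$ lies in $\piG(\supp\nuW(x+\LL))\cap\piG(\supp\nuW(x'+\LL))$; then $g=\piG(x+\ell)=\piG(x'+\ell')$ for suitable $\ell,\ell'\in\LL$ with $x_H+\ell_H,\,x'_H+\ell'_H\in W$, whence $\piG(x'-x)=\piG(\ell-\ell')\in\piG(\LL)$, a contradiction. So if $\piG(x'-x)\notin\piG(\LL)$ the two supports are literally disjoint and the measures are mutually singular. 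The only delicate issue is the word ``either/or'' being \emph{exclusive-or--flavoured} in spirit: when $\piG(x'-x)\in\piG(\LL)$ one genuinely can have overlap, so nothing more is claimed; and when it is not in $\piG(\LL)$, disjoint supports for the \emph{point sets} $\supp\nuW(x+\LL)$ and $\supp\nuW(x'+\LL)$ immediately gives $\piG_*\nuW(x+\LL)\perp\piG_*\nuW(x'+\LL)$ because both are purely atomic with those supports. Thus the argument for d) is short once one unwinds the definitions; the care needed in b) and a) is in circumventing the non-properness of $\piG$ by exploiting $\piG|_\LL$ \oneone\ and relative compactness of $\piH(X)$, exactly as in the proof of Proposition~\ref{prop:projection-usc}e.
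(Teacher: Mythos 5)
Your items c) and d) are correct and essentially the paper's own arguments: c) is the continuous image of the compact set $\MW$, and for d) both projected configurations are purely atomic, so failure of mutual singularity (equivalently, non-disjoint supports) produces a common atom $g=\piG(x+\ell)=\piG(x'+\ell')$, whence $\piG(x'-x)=\piG(\ell-\ell')\in\piG(\LL)$.

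The genuine gap is in b), and it infects your treatment of a). The finite set $\LL_0\subseteq\LL$ such that $\supp(f)\times H$ meets $X+\ell$ only for $\ell\in\LL_0$ does not exist in general: $(X+\ell)\cap(\supp(f)\times H)\neq\emptyset$ precisely when $\ell_G\in\supp(f)-\piG(X)$, and since $\piG(\LL)$ is typically dense in $G$ (it is dense in the golden/silver mean examples and for $\BB$-free systems), infinitely many $\ell$ qualify; $\supp(f)\times H$ is not compact, so no finiteness comes for free. Similarly, injectivity of $\piG|_\LL$ together with relative compactness of $\piH(X)$ is not what bounds the number of mass-carrying points of $\nu$ above $\supp(f)$; what does the work is compactness of the window: every $\nu\in\MW$ is supported in $G\times W$, so $\nu(K\times H)=\nu(K\times W)$ with $K\times W$ compact. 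This is exactly how the paper argues: a) follows from uniform local finiteness of $\MW$ together with compactness of $W$, and for b) one chooses $g_{\scriptscriptstyle W}\in C_c(H)$ with $1_W\leqslant g_{\scriptscriptstyle W}$, observes that $f\circ\piG\cdot g_{\scriptscriptstyle W}\circ\piH$ has compact support and agrees with $f\circ\piG$ on $G\times W\supseteq\supp(\nu)$ for all $\nu\in\MW$, and reads off vague continuity of $\piG_*$ directly. Note also that your formula $\int f\,d(\piG_*\nu)=\sum_{\ell\in\LL_0}\nu(\{x+\ell\})f(\piG(x+\ell))$ presupposes that a general $\nu\in\MW$ is carried by a single coset $x+\LL$ (true for $\nu\neq\0$, but only via Lemma~\ref{lemma:projection-basic2}, which is not available inside this lemma), and the atom evaluations $\nu\mapsto\nu(\{x+\ell\})$ are not vaguely continuous as $\nu$ and its coset vary; so even after repairing the finiteness claim, this route needs the domination device (or an equivalent use of $W$ compact) to go through.
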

\begin{proof}
a)\quad
This follows from the same property of $\MW$ together with the compactness of $W\subseteq H$.\\
b)\quad Let $f\in C_c(G)$. Then $f\circ \piG$ is continuous but typically does not have compact support. However, as $H$ is locally compact, there is a function $g_{\scriptscriptstyle W}\in C_c(H)$ such that $1_W\leqslant g_{\scriptscriptstyle W}$. Hence we have for any $\nu^n,\nu\in \MW$: $\int f\circ\piG\,d\nu^n\to\int f\circ\piG\,d\nu$ if and only if $\int f\circ\piG\cdot g_{\scriptscriptstyle W}\circ\piH\,d\nu^n\to\int f\circ\piG\cdot g_{\scriptscriptstyle W}\circ\piH\,d\nu^n$. 
As $f\circ\piG\cdot g_{\scriptscriptstyle W}\circ\piH$ has compact support, we can conclude that $\piG_*\nu^n$ converges vaguely to $\piG_*\nu$ whenever $\nu^n$ converges vaguely to $\nu$. This proves that $\piG_*$ is continuous.\\
c)\quad
Because of b), $\MWG$ is the continuous image of a compact set. Hence it is compact.\\
d) Let $x,x'\in X$ and suppose that $\piG_*\nuW(x+\LL)\not\perp\piG_*\nuW(x'+\LL)$.
Then there is $g\in G$ such that $\piG_*\nuW(x+\LL)(\{g\})>0$ and $\piG_*\nuW(x'+\LL)(\{g\})>0$. It follows that there are $\ell,\ell'\in \LL$ such that $\piG(x+\ell)=g=\piG(x'+\ell')$ and hence
$\piG(x'-x)=\piG(\ell-\ell')\in\piG(\LL)$.
\end{proof}

\paragraph{Proof of Proposition~\ref{prop:projection-homeo}}\quad\\
a) This is Lemma~\ref{lemma:projection-piG1}b.\\
b) $\piGH_*:\GMW\to\MW$ is \oneone on $(\piGH_*)^{-1}(\MW\setminus\{\0\})$ by Lemma~\ref{lemma:projection-unique-nu-1}. Suppose now that $\inn(W)\neq\emptyset$. Then $\0\not\in \MW$ because of  Proposition~\ref{prop:projection-usc}e, so that $\piGH_*$ is \oneone. As it is
a continuous and surjective map between the compact space $\GMW$ and the Hausdorff space $\hX$, it is a homeomorphism.\\
c) By Proposition~\ref{prop:projection-usc}, $\CW$ is a dense $G_\delta$-subset of $\hX$ and 
$(\pihX_*)^{-1}\{\hx\}=\{(\hx, \nuW(\hx))\}$ for every $\hx\in \CW$. Hence $\pihX_*:\GMW\to\hX$ is an almost \oneone extension of an equicontinuous factor. This extension can be factorised over the maximal equicontinuous factor of $(\GMW,S)$, call it $(Y,R)$. Then also $(Y,R)$ is an almost \oneone extension of $(\hX,\hT)$, and Lemma~\ref{lemma:projection-top-dyn1} below
shows that it coincides with $(\hX,\hT)$.\qed
\\[5mm]
The following lemma is well known when $(Y,R)$ is minimal, but we could not locate this slight generalisation in the literature. Recall that a metric $G$-dynamical system $(Y,R)$ is distal if for any pair $x\ne y$ in $Y$ there is $\epsilon>0$ such that $d(R_gx,R_gy)\ge\epsilon$ for all $g\in G$. For example, if $(Y,R)$ is equicontinuous, then $(Y,R)$ is distal. 

\begin{lemma}\label{lemma:projection-top-dyn1}
Let $\pi:(Y,R)\to(X,S)$ be an extension of compact metrisable $G$-dynamical systems and suppose that
\begin{compactenum}
\item $(Y,R)$ is distal,
\item $(X,S)$ is minimal, and
\item there exist some $x_0\in X$ such that $\card(\pi^{-1}\{x_0\})=1$.
\end{compactenum}
Then $\pi$ is a homeomorphism.
\end{lemma}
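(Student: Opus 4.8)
The plan is to reduce everything to injectivity of $\pi$: once $\pi$ is known to be injective, the lemma follows from the standard fact that a continuous bijection from a compact space onto a Hausdorff space is a homeomorphism — here $\pi$ is onto because it is an extension, $Y$ is compact, and $X$ is metrisable hence Hausdorff.

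To prove injectivity I would argue by contradiction. Suppose $y_1,y_2\in Y$ with $y_1\neq y_2$ and $\pi(y_1)=\pi(y_2)=:x$. Since $(X,S)$ is minimal, the $G$-orbit of $x$ is dense in $X$, so there is a sequence $(g_n)_n$ in $G$ with $S_{g_n}x\to x_0$. By sequential compactness of the metrisable space $Y$, after passing twice to subsequences we may assume $R_{g_n}y_1\to z_1$ and $R_{g_n}y_2\to z_2$ for some $z_1,z_2\in Y$. Because $\pi$ is continuous and intertwines the actions, $\pi(z_i)=\lim_{n}\pi(R_{g_n}y_i)=\lim_{n}S_{g_n}\pi(y_i)=\lim_{n}S_{g_n}x=x_0$ for $i=1,2$. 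Hence $z_1,z_2\in\pi^{-1}\{x_0\}$, and assumption (3) forces $z_1=z_2$.

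On the other hand, distality of $(Y,R)$ applied to the pair $y_1\neq y_2$ yields $\epsilon>0$ such that $d(R_gy_1,R_gy_2)\geq\epsilon$ for every $g\in G$; in particular $d(R_{g_n}y_1,R_{g_n}y_2)\geq\epsilon$ for all $n$, and letting $n\to\infty$ (using continuity of $d$) gives $d(z_1,z_2)\geq\epsilon>0$, contradicting $z_1=z_2$. Therefore $\pi$ is injective, and the lemma follows.

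There is essentially no serious obstacle here; the only points requiring a little care are the extraction of the two convergent subsequences and the bookkeeping that $\pi$ commutes with the $G$-actions along them. If one prefers not to rely on sequences — for instance for a possibly uncountable acting group — the same argument goes through verbatim with nets, using compactness of $Y$ in place of sequential compactness. In the application of interest $G$ is second countable, so sequences suffice, and equicontinuity of the intended $(Y,R)$ is a (much stronger) special case of distality, so the hypotheses are met in the proof of Proposition~\ref{prop:projection-homeo}c.
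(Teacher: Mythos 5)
Your proof is correct, but it follows a genuinely different (and leaner) route than the paper's. The paper first shows that $(Y,R)$ is itself minimal: any minimal subset $Z\subseteq Y$ maps onto $X$ by minimality of $(X,S)$, hence contains the singleton $\pi^{-1}\{x_0\}$, so there is only one minimal subset; since a distal system is the disjoint union of its minimal subsystems (Furstenberg), $(Y,R)$ is minimal. Only then does it run the classical argument, using minimality of $Y$ to produce a sequence $(g_n)$ with $R_{g_n}y_1\to y_0$, the unique point over $x_0$, and distality (proximal points coincide) to get $y_1=y_2$. You bypass the first step entirely: you use minimality of $(X,S)$ to push the common image $x=\pi(y_1)=\pi(y_2)$ toward $x_0$, and then compactness of $Y$ to extract limit points $z_1,z_2$ of the lifted orbits, which must both lie in the fibre over $x_0$ and hence coincide by hypothesis (3); distality then gives the contradiction. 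The final step (continuous bijection from a compact space to a Hausdorff space is a homeomorphism) is the same in both arguments. What your version buys is self-containedness — no appeal to the structure theorem that distal systems decompose into minimal sets, only the definition of distality plus compactness; what the paper's version buys is the intermediate fact that $(Y,R)$ is minimal, though that also follows a posteriori from the conclusion, so nothing substantive is lost. Your closing remarks (nets for non-metrisable settings, equicontinuity implying distality in the intended application) are accurate.
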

\begin{proof}
Let $Z$ be a minimal subset of $Y$. Then $\pi(Z)=X$, as $(X,S)$ is minimal. Hence $\pi^{-1}\{x_0\}\subseteq Z$. In particular, $Y$ has only one minimal subset. But as $(Y,R)$ is distal, it is the disjoint union of its minimal subsystems \cite[Prop.~II.7]{Furstenberg1967}. Hence $(Y,R)$ is minimal. The rest of the proof is classical:

Let $y_0$ be the unique point in $\pi^{-1}\{x_0\}$. Suppose  $y_1, y_2\in Y$ are such that $\pi(y_1)= \pi(y_2)$. Then there is a sequence $(g^n)_n$ in $G$ such that
$\lim_{n\to\infty}R_{g^n}y_1=y_0$, and passing to a subsequence if necessary, we may assume that $y_0':=\lim_{n\to\infty}R_{g^n}y_2$ exists as well. Then $\pi(y_0')=\lim_{n\to\infty}\pi(R_{g^n}y_2)=\lim_{n\to\infty}S_{g^n}(\pi(y_2))=\lim_{n\to\infty}S_{g^n}(\pi(y_1))=\lim_{n\to\infty}\pi(R_{g^n}y_1)=\pi(y_0)=x_0$ so that $y_0'=y_0$.
As $(Y,R)$ is distal, this implies $y_1=y_2$. Since $\pi$ is a \oneone and surjective map between a compact and a Hausdorff space, it follows that $\pi$ is a homeomorphism.
\end{proof}
\paragraph{Proof of Theorem~\ref{theo:projection-top}}\quad\\
a) The first claim is a corollary to Proposition~\ref{prop:projection-homeo}\, b and c. We turn to the subsystems $\overline{\cG({\nuW}|_{\CW})}\subseteq \GMW$ and $\overline{\nuW(\CW)}\subseteq\MW$. As the second subsystem is a continuous factor of the first one, and as both are topological almost \oneone extensions of $(\hX,\hT)$ by the first claim, it suffices to note that the restriction of $S$ to ${\overline{\cG({\nuW}|_{\CW})}}$ is 
the unique minimal subsystem of $(\GMW,S)$ in view of Lemma~\ref{lemma:projection-ZC}.
\\
b) 
The first claim is Proposition~\ref{prop:projection-homeo}\, c.
As $\ZW\neq\emptyset$, we have $\0\in \MW$, and of course $\0$ is a fixed point for the action of $S$ on $\MW$. Therefore, any factor of $(\MW,S)$ has a fixed point, so any minimal factor must be trivial. As $(\MWG,S)$ is a factor of $(\MW,S)$, the same holds for $(\MWG,S)$.

We only prove the failure of weak mixing for the system $(\MWG,S)$ and observe that it is a factor of $(\MW,S)$. To this end assume that $(\MWG,S)$ is topologically weak mixing, and 
observe that $\GMWG\subseteq\hX\times \MWG$ is a
topological joining between $(\hX,\hT)$ and $(\MWG,S)$. 
As $(\hX,\hT)$ is minimal and distal (since it is even equicontinuous), an old disjointness result of Furstenberg \cite[Theorem II.3]{Furstenberg1967} then shows that the joining is trivial, i.e., that $\GMWG=\hX\times \MWG$; see \cite{BaakeHuck14} for this argument. 
As $\CW\neq\emptyset$, 
Proposition~\ref{prop:projection-usc}c) and  Remark~\ref{rem:CW} show that 
$\card(\MWG)=1$.

\paragraph{Proof of Corollary~\ref{cor:projection-top}b}\quad\\
Suppose that $\inn(W)\neq\emptyset$ and let $W':=\overline{\inn(W)}$. Then $\inn(W')=\inn(W)$ so that $W'$ is topologically regular (see Subsection~\ref{subsec:top-reg-wind}) and $\partial W'\subseteq\partial W$. Therefore $\CW\subseteq\CWprime$ and $\nuWG|_{\CW}=\nuWGprime|_{\CW}$. Hence $\overline{\nuWG(\CW)}=\overline{\nuWGprime(\CW)}\subseteq\overline{\nuWGprime(\CWprime)}$, and as $\overline{\nuWGprime(\CWprime)}\subseteq\overline{\nuWGprime(\CW)}$ by Remark~\ref{remark:support}a, we conclude that \linebreak
$M:=\overline{\nuWGprime(\CWprime)}=\overline{\nuWG(\CW)}\subseteq\MWG\cap\MWGprime$. As $W'$
is assumed to be aperiodic, Lemmas~\ref{lemma:projection-injective-1} and~\ref{lemma:regular-window} show that $\piG_*:\MWprime\to\MWGprime$ is a homeomorphism. Hence, by Corollary~\ref{cor:projection-top},
$(\overline{\nuWG(\CW)},S)=(\overline{\nuWGprime(\CWprime)},S)$ is an almost automorphic extension of $(\hX,\hT)$, and as $(\MWG,S)$ is a topological factor of $(\MW,S)$, 
$\overline{\nuWG(\CW)}$ is the only minimal subsystem of $(\MWG,S)$, see also Theorem~\ref{theo:projection-top}a.

\paragraph{Proof of Proposition~\ref{prop:projection-mth}}\quad\\
a) The $\hT$-invariance is Remark \ref{rem:CW}, the remaining statement is obvious from b) and c) below.

\noindent b) Suppose first that $m_H(W)>0$.
As the action $h\mapsto \ell_H+h$ of $\LL$ on $(H,m_H)$ is metrically transitive, it follows easily that the set $\bigcup_{\ell\in \LL}(W-\ell_H)$ has conull $m_H$-measure, so that
\begin{displaymath}
m_{G\times H}\left((\piH)^{-1}\left(\bigcap_{\ell\in \LL}(W^c-\ell_H)\right)\right)
=
m_{G\times H}\left(G\times \left(\bigcup_{\ell\in \LL}(W-\ell_H)\right)^c\right)
=0\ .
\end{displaymath}
Hence
\begin{displaymath}
m_\hX(\ZW)
=
\dL\cdot m_{G\times H}\left(X\cap(\piH)^{-1}\left(\bigcap_{\ell\in \LL}(W^c-\ell_H)\right)\right)
=
0\ .
\end{displaymath}
Conversely, if $m_H(W)=0$, then
\begin{displaymath}
\begin{split}
m_\hX(\hX\setminus\ZW)\leqslant \dL\cdot m_{G\times H}\left(G\times \left(\bigcup_{\ell\in\LL}(W-\ell_H)\right)\right)=0\ .
\end{split}
\end{displaymath}

\noindent c) Recall that $\hx=x+\LL\in \CW$ if and only if $x_H\in\bigcap_{\ell\in \LL}((\partial W)^c-\ell_H)$ by Lemma~\ref{lemma:projection-CW}. Hence the claim follows by the same arguments as in b), with $W$ replaced by $\partial W$.
\qed

\paragraph{Proof of Theorem~\ref{theo:projection-mth}}\quad\\
a) As $m_H(W)>0$, we have $m_\hX(Z_W)=0$ because of Proposition~\ref{prop:projection-mth} b). This implies $\QGM(\hX\times\{\0\})=0$, so that 
$(\GMW,\QGM,S)$ and $(\MW,\QM,S)$ are measure theoretically isomorphic in view of Proposition~\ref{prop:projection-homeo} b).
 Since by definition $\QGM=m_\hX\circ(\id,\nuW)^{-1}$, it follows that 
$(\id,\nuW):\hX\to\GMW$ is a measure theoretic isomorphism between 
 the systems $(\hX,m_\hX,\hT)$ and $(\GMW,\QGM,S)$ 
with inverse $\pihX_*$.
\\
b) $(\hX\times\MWG,\QGM,\hT\times S)$ is obviously a joining of $(\hX,m_\hX,\hT)$ and  $(\MWG,\QMG,S)$. If it is trivial, then $\QGM$ is a product measure that is supported by the graph of $\nuWG$ so that $\nuWG$ is constant $m_\hX$-a.e.
\\
c) 
By assumption we must have $\inn(W)\ne\emptyset$, which implies $\ZW=\emptyset$ by Proposition~\ref{prop:projection-usc}. Since $m_H(\partial W)=0$, we have $m_\hX(\CW)=1$, that is $\nuW:\hX\to\MW$ is $m_\hX$-a.e. continuous. It follows that $\{\hx\in\hX: \card((\pihX_*)^{-1}\{\hx\})=1\}$ has full Haar measure. In particular, each invariant measure on $\GMW$ that projects to $m_\hX$ coincides with $\QGM$, and as each such invariant measure projects to the unique invariant measure $m_\hX$ of $(\hX,\hT)$, the system $(\GMW,S)$ is indeed uniquely ergodic. As $(\MW,S)$ and $(\MWG,S)$ are continuous factors of $(\GMW,S)$, the unique ergodicity immediately carries over to these systems.
\qed

\paragraph{Proof of Lemma~\ref{lemma:uniquely-coding}}\quad\\
``$\Rightarrow$'' 
Assume that $\hx=x+\LL,\hx'=x'+\LL\in\hX$ are such that $\piG_*\nuW(\hx)=\piG_*\nuW(\hx')$. 

If one of $\nuW(\hx)$ and $\nuW(\hx')$ is $\0$, then also 
$\piG_*\nuW(\hx)=\piG_*\nuW(\hx')=\0$ and hence $\nuW(\hx)=\0=\nuW(\hx')$.

So assume from now on that $\nuW(\hx),\nuW(\hx')\neq\0$.
By Lemma~\ref{lemma:projection-piG1}d, 
there is some $\tilde{\ell}\in \LL$ such that $x'_G=x_G+\tilde\ell_G$.
Let $\tilde{x}=x+\tilde\ell$. Then $\tilde{x}_G=x'_G$, $\nuW(x+\LL)=\nuW(\tilde{x}+\LL)$, and
\begin{displaymath}
\sum_{\ell\in \LL}1_W(\tilde x_H+\ell_H)\,\delta_{\tilde x_G+\ell_G}
=
\piG_*\nuW(\tilde x+\LL)
=
\piG_*\nuW(x+\LL)
=
\piG_*\nuW(x'+\LL)
=
\sum_{\ell\in \LL}1_W(x'_H+\ell_H)\,\delta_{\tilde x_G+\ell_G}\ .
\end{displaymath}
Therefore, for each
 $\ell\in \LL$, $(\tilde x_H+\ell_H)\in W$ if and only if $(x_{H}'+\ell_H)\in W$, which can be expressed as $(-\tilde{x}_H+W)\cap\piH(\LL)=(-x'_H+W)\cap\piH(\LL)$.
These sets  are non-empty, because  $\piG_*\nuW(x'+\LL)\neq\0$.
As $(W,\LL)$ is uniquely coding this implies $\tilde x_H=x_H'$ and hence $\tilde x=x'$, so that $x'-x=\tilde\ell\in \LL$, i.e., $\hx=\hx'$ and hence $\nuW(\hx)=\nuW(\hx')$.

``$\Leftarrow$'' Let $h,h'\in H$ be given such that $(-h+W)\cap \piH(\LL)=(-h'+W)\cap \piH(\LL)\ne\emptyset$. Then we have  $\piG_*\nuW((0,h)+\LL)=\piG_*\nuW((0,h')+\LL)\ne\0$, so that by assumption also $\nuW((0,h)+\LL)=\nuW((0,h')+\LL)\neq\0$. 
Then Lemma~\ref{lemma:projection-unique-nu-1} implies that $(0,h)+\LL=(0,h')+\LL$ so that
$(0,h-h')\in \LL$. As $\piG|_\LL$ is \oneone, this means $h=h'$. Hence $(W,\LL)$ is uniquely coding.
\qed

\paragraph{Proof of Lemma~\ref{lemma:projection-injective-1}}\quad\\
Suppose first that $(W,\LL)$ is strongly uniquely coding.
Because of Lemma~\ref{lemma:projection-piG1}b and compactness of $\MW$ it only remains to prove that $\piG_*$ is \oneone. 
Suppose that $\piG_*\nu=\piG_*\nu'$ for some $\nu,\nu'\in \MW$. If 
$\piG_*\nu=\piG_*\nu'=\0$,
then also $\nu=\0=\nu'$. Otherwise Lemma~\ref{lemma:projection-unique-nu-1} shows that there are unique $x,x'\in X$ such that $\nu\leqslant\nuW(x+\LL)$ and $\nu'\leqslant\nuW(x'+\LL)$. Hence, $\piG_*\nu\leqslant\piG_*\nuW(x+\LL)$ and
$\piG_*\nu'\leqslant\piG_*\nuW(x'+\LL)$, and as $0\neq\piG_*\nu=\piG_*\nu'$,
Lemma~\ref{lemma:projection-piG1}d implies that 
$x_G'=x_G+\tilde{\ell}_G$ for some $\tilde{\ell}\in\LL$, and as in the previous proof we can replace $x$ by $\tilde{x}=x+\tilde{\ell}$ such that $\nuW(\tilde{x}+\LL)=\nuW(x+\LL)$ and $\tilde{x}_G=x'_G$. Hence we can assume without loss that $x_G=x_G'$, and it remains to show that $x_H=x_H'$.

Let $G\times H=\bigcup_{j\in\N}K_j$, where $(K_j)_j$ is an increasing sequence of compact subsets of $G\times H$. Set $\LL_j:=\LL\cap K_j$ so that
$\LL=\bigcup_{j\in\N}\LL_j$ and all $\LL_j$ are finite. As $\nu,\nu'$ are in the closure of $\nuW(\hX)$, there are 
$\hx_n=(x_{n,G},x_{n,H})+\LL,\hx_n'=(x'_{n,G},x'_{n,H})+\LL\in\hX$ such that
$x_n,x_n'\in X$,
$\hx_n\to x+\LL$, $\hx_n'\to x'+\LL$ and $\nuW(\hx_n)\to\nu$, $\nuW(\hx_n')\to\nu'$, see Lemma~\ref{lemma:projection-basic2}.
Let $y_n=(x_G,x_{n,H})$ and $y_n'=(x_G,x'_{n,H})$  (recall that $x_G=x_G'$). 
 As
$\nuW(y_n+\LL)=\nuW(\hT_{x_G-x_{n,G}}\hx_n)=S_{x_G-x_{n,G}}\nuW(\hx_n)$,
also $\nuW(y_n+\LL)\to\nu$ and similarly $\nuW(y_n'+\LL)\to\nu'$.
As this is vague convergence, this means 
\begin{equation*}
\forall j\in\N\ \exists n_j\in\N\ \forall \ell\in\LL_j\ \forall n\geqslant n_j:\
\begin{cases}
x_{n,H}+\ell_H=(y_n+\ell)_H\in W\text{ if and only if }\nu\{x+\ell\}=1\\
\hspace{3cm}\text{and}\\
x_{n,H}'+\ell_H=(y'_n+\ell)_H\in W\text{ if and only if }\nu'\{x'+\ell\}=1\ .
\end{cases}
\end{equation*}
As $x_G=x_G'$ and as $\piG|_\LL$ is \oneone, we have 
\begin{equation*}
\nu\{x+\ell\}=1
\quad\Leftrightarrow\quad
\piG_*\nu\{x_G+\ell_G\}=1
\quad\Leftrightarrow\quad
\piG_*\nu'\{x'_G+\ell_G\}=1
\quad\Leftrightarrow\quad
\nu'\{x'+\ell\}=1\ .
\end{equation*}
Hence 
\begin{equation}
\forall \ell\in\LL\ \exists n_\ell\in\N\ \forall n\geqslant n_\ell: 1_{W}(x_{n,H}+\ell_H)=\nu\{x+\ell\}=\nu'\{x'+\ell\}=1_{W}(x_{n,H}'+\ell_H)\ ,
\end{equation}
As $(W,\LL)$ is assumed to be strongly uniquely coding and as $\nu,\nu'\neq\0$, it follows that $x_H=x_H'$.

Conversely, suppose now that $\piG_*$ is \oneone. Let $h,h',h_n,h_n'$ be as in 
Definition~\ref{def:unique-coding}b.
Let $x=(0,h)$, $x_n=(0,h_n)$, $x'=(0,h')$ and $x_n'=(0,h_n')$. Then $x_n\to x$, $x_n'\to x'$ and
\begin{equation}\label{eq:coincidence}
\forall \ell\in\LL\ \exists n_\ell\in\N\ \forall n\geqslant n_\ell: 1_{G\times W}(x_n+\ell)=1_{G\times W}(x_n'+\ell)\ ,
\end{equation}
where this common value is $1$ for at least one $\tilde{\ell}\in\LL$
and all $n\geqslant n_\ell$. Passing to subsequences, if necessary, we can assume that there are $\nu,\nu'\in\MW\setminus\{\0\}$
such that $\nuW(x_n+\LL)\to\nu$ and $\nuW(x_n'+\LL)\to\nu'$ vaguely. In particular, $\nu\leqslant\nuW(x+\LL)$ and $\nu'\leqslant\nuW(x'+\LL)$ by Lemma~\ref{lemma:projection-basic2}. Abusing slightly the notation $S_g\nu$, we define $S_y\nu$ by $(S_y\nu)(A)=\nu(-y+A)$ for all $y\in G\times H$. Let $K_1\subseteq K_2\subseteq\dots\subseteq G\times H$ be as in the first part of the proof. Then
\begin{equation*}
S_{y}\nuW(x+\LL)
=
\sum_{\ell\in\LL}1_{G\times W}(x+\ell)\cdot S_{y}\delta_{x+\ell}
=
\sum_{\ell\in\LL}1_{G\times W}(x+\ell)\cdot \delta_{y+x+\ell}\ ,
\end{equation*}
so that in view of (\ref{eq:coincidence}) there are $n(j)\in\N$ such that for each $K_j$ and all  $n\geqslant n(j)$
\begin{equation*}
\left(S_{-x_n}\nuW(x_n+\LL)\right)|_{K_j}
=
\left(S_{-x_n'}\nuW(x_n'+\LL)\right)|_{K_j}\ .
\end{equation*}
In the limit $n\to\infty$ this yields $S_{-x}\nu=S_{-x'}\nu'$. As $\piG(x)=\piG(x')=0$, this implies $\piG_*\nu=\piG_*(S_{-x}\nu)=\piG_*(S_{-x'}\nu')=\piG_*\nu'$, and as $\piG_*$ is \oneone, it follows that $\nu=\nu'\in\MW\setminus\{\0\}$. Hence $\nuW((0,h)+\LL)=\nuW((0,h')+\LL)\neq\0$, and $h=h'$ follows in the same way as at the end of the 
proof of Lemma~\ref{lemma:uniquely-coding}.
\qed

\section{Configurations with maximal density and their orbit closures}
\label{sec:Moody-proof}

\paragraph{Proof of Theorem~\ref{theo:Moody-extension}}(following the arguments in 
\cite[Proof of Theorem 1]{Moody2002})
\quad\\
If $P$ is the one-point mass concentrated on $\0\in\cM$, then the limit $D_P$ is clearly zero. Otherwise $P$ is an ergodic $S$-invariant probability measure on $\MW\setminus\{\0\}$, so that $P\circ \hat\pi^{-1}$ is a $\hT$-invariant probability measure on $\hX$, 
where $\hat\pi:\MW\setminus\{\0\}\to\hX\setminus \ZW$ is defined in Definition~\ref{def:hat-pi}.
It follows that $P\circ\hat\pi^{-1}=m_\hX$. 
 Hence it suffices to prove (\ref{eq:nu-density}) for $m_\hX$-a.e. $\hx$ and all $\nu\leqslant\nuW(\hx)$, i.e., all $\nu\in\hat\pi^{-1}\{\hx\}$. 

Observe first that
 \begin{equation}\label{eq:Moody-proof-1}
\limsup_{n\to\infty}\frac{\nu(A_n\times H)}{m_G(A_n)}
\leqslant
\limsup_{n\to\infty}\frac{\nu_{\scriptscriptstyle W}(\hx)(A_n\times H)}{m_G(A_n)}\leqslant\dL\cdot m_H(W)
\quad\text{for  all $\hx\in\hX$ and all }\nu\in\hat\pi^{-1}\{\hx\}
 \end{equation}
 by Theorem~\ref{theo:Moody}. 
Next we investigate the convergence of the l.h.s. of (\ref{eq:Moody-proof-1}) and identify its limit $D_P$\footnote{The following argument also applies to non-abelian but unimodular $G$.}.
 To this end fix a compact subset $H_0\subseteq H$ such that $\nu(G\times(H\setminus  H_0))=0$ for all $\nu\in \MW$. 
 Then fix a compact zero neighbourhood $B\subseteq G$ such that 
  $(\ell_G+B)\cap(\ell_G'+B)=\emptyset$ for all $\ell\neq\ell'$ in $\LL\cap (G\times H_0)$. 
 With $\hx=x+\LL$,  this assumption has the following consequence:
\begin{equation*}
\begin{split}
\nu\left((-\partial^BA_n)\times H_0\right)
&\leqslant
\nuW(\hx)\left((-\partial^BA_n)\times H_0\right)
= 
\sum_{y\in(x+\LL)\cap(G\times W)}\!\!\!\!\!1_{-\partial^BA_n}(y_G)
\leqslant 
\sum_{\ell\in\LL\cap(G\times (-x_H+H_0))}\!\!\!\!\!1_{-\partial^BA_n}(x_G+\ell_G)
\\
&=
\frac{1}{m_G(B)}\sum_{\ell\in\LL\cap((-x_G-\partial^BA_n)\times (-x_H+H_0))}\int 1_{x_G+\ell_G+B}(g)\,dm_G(g)\\
&\leqslant
\frac{1}{m_G(B)}\,m_G\left(\bigcup_{\ell\in\LL\cap((-x_G-\partial^BA_n)\times (-x_H+H_0))}(x_G+\ell_G+B)\right)
\leqslant
\frac{1}{m_G(B)}\,m_G\left(-\partial^BA_n+B\right)\\
&=\frac{1}{m_G(B)}\,m_G\left(-B+\partial^BA_n\right)
\leqslant
\frac{1}{m_G(B)}\,m_G\left(\partial^{-B+B}A_n\right),
\end{split}
\end{equation*}
so that
\begin{equation*}
\begin{split}
&\hspace*{-2cm}\left|
\int_{G\times H}\left(1_{-A_n}\ast 1_{B}\right)(x_G)\,d\nu(x)-m_G(B)\,\nu((-A_n)\times H)\right|\\
&\leqslant
m_G(B)\int_{G\times H}\left|\left(1_{-A_n}*\frac{1_B}{m_G(B)}\right)(x_G)-1_{-A_n}(x_G)\right|d\nu(x)\\
&\leqslant
m_G(B)\,\nu\left((-\partial^BA_n)\times H_0\right)
\leqslant
m_G\left(\partial^{-B+B}A_n\right).
\end{split}
\end{equation*}  
As $(A_n)_n$ is a van Hove sequence, this implies
\begin{equation}\label{eq:vanHove-consequence}
\begin{split}
\lim_{n\to\infty}m_G(B)\frac{\nu((-A_n)\times H)}{m_G(A_n)}
&=
\lim_{n\to\infty}\frac{1}{m_G(A_n)}\int_{G\times H}(1_{-A_n}\ast 1_B)(x_G)\,d\nu(x)\\
&=
\lim_{n\to\infty}\frac{1}{m_G(A_n)}\int_{G\times H}\left(\int_{-A_n} 1_{B}(-g+x_G)\,dm_G(g)\right)d\nu(x)\\
&=
\lim_{n\to\infty}\frac{1}{m_G(A_n)}\int_{G\times H}\left(\int_{A_n} 1_{B\times H}((g,0)+x)\,dm_G(g)\right)d\nu(x)\\
&=
\lim_{n\to\infty}\frac{1}{m_G(A_n)}\int_{A_n}\left(\int_{G\times H} 1_{B\times H}\circ T_{g}\,d\nu\right)dm_G(g)\\
&=
\lim_{n\to\infty}\frac{1}{m_G(A_n)}\int_{A_n}(S_{g}\nu)(B\times H)\,dm_G(g)
\end{split}
\end{equation}
whenever $\hx\in\hX$, $\nu\in\hat{\pi}^{-1}\{\hx\}$ and any of these limits exists. For later reference, we note that the same holds for limits along subsequences.

Observe next that the evaluation
$\nu\mapsto \nu(B\times H)=\nu(B\times H_0)$  is upper semicontinuous, in particular measurable so that, as in \cite{Moody2002},
the generalised Birkhoff ergodic theorem \cite{Lindenstrauss2001} applies to the last line of (\ref{eq:vanHove-consequence}), i.e., we have for $P$-a.e. $\nu$
\begin{equation}\label{eq:Moody-generalization}
\begin{split}
\int_{\MW}\nu'(B\times H)\,dP(\nu')
&=
\lim_{n\to\infty}\frac{1}{m_G(A_n)}\int_{A_n}(S_{g}\nu)(B\times H)\,dm_G(g)\\
&=
\lim_{n\to\infty}m_G(B)\frac{\nu((-A_n)\times H)}{m_G(A_n)}
=\lim_{n\to\infty}m_G(B)\frac{\nu(A_n\times H)}{m_G(A_n)}\ ,
\end{split}
\end{equation}
where we note for the last equation that $(-A_n)_n$ is a van Hove sequence as well if $G$ is abelian.

We just have shown that 
$D_P=\frac{1}{m_G(B)}\int\nu(B\times H)\,dP(\nu)$. 
It remains to prove that $D_P\leqslant \dL\cdot m_H(W)$ with equality if and only if $P=\QM =m_\hX\circ\nuW^{-1}$, see Definition~\ref{def:Q_M}. Observe first that $D_{\QM }=\dL\cdot m_H(W)$ by Moody's Theorem~\ref{theo:Moody}. Next observe that $\nu\leqslant\nuW(\hx)$ where $\hx=\hat\pi(\nu)$. Recalling that 
$P\circ\hat\pi^{-1}=m_\hX$, we see that
\begin{equation*}
\begin{split}
D_P
&=
\frac{1}{m_G(B)}\int_{\MW}\nu(B\times H)\,dP(\nu)
\leqslant
\frac{1}{m_G(B)}\int_{\MW}\nuW(\hat\pi(\nu))(B\times H)\,dP(\nu)\\
&=
\frac{1}{m_G(B)}\int_{\hX}\nuW(\hx)(B\times H)\,dm_\hX(\hx)
=
\frac{1}{m_G(B)}\int_{\MW}\nu(B\times H)\,d\QM (\nu)
=D_{\QM }\ ,
\end{split}
\end{equation*}
with equality if and only if $\nu=\nuW(\hat\pi(\nu))$ for $P$-a.e. $\nu$, i.e., if $P=\QM $.
\qed
\paragraph{Proof of Corollary~\ref{coro:Moody-extension}}\quad\\
Let $P$ be an ergodic $S$-invariant probability measure on the Borel-$\sigma$-algebra $\cB(\MWG)$ of $\MWG$. It can be transfered to an ergodic $S$-invariant probability $P_0$ on the sub-$\sigma$-algebra $(\piG_*)^{-1}(\cB(\MWG))\subseteq\cB(\MW)$. As $\MWG$ is a Polish space, $\cB(\MWG)$ and hence also $(\piG_*)^{-1}(\cB(\MWG))$ are countably generated.
Hence $P_0$ can be extended to a measure $P'$ on $\cB(\MW)$ \cite[Theorem 9.8.2]{bogachev2006}.

Let $\tilde{P}$ be a weak limit of a subsequence $P'_{n_i}$, where
\begin{equation*}
P'_n:=\frac{1}{m_G(A_n)}\int_{A_n} P'\circ S_{-g}\,dm_G(g)\ ,
\end{equation*}
and observe that for each continuous $\varphi:\MWG\to\R$ holds
\begin{equation*}
\begin{split}
\int_{\MW}\varphi\circ\piG_*\,dP'_n
&=
\frac{1}{m_G(A_n)}\int_{A_n}\left(\int_{\MWG} \varphi\circ S_g\,d(P'\circ(\piG_*)^{-1})\right)dm_G(g)\\
&=
\frac{1}{m_G(A_n)}\int_{A_n}\left(\int_{\MWG} \varphi\circ S_g\,dP\right)dm_G(g)
=\int_{\MWG}\varphi\,dP\ ,
\end{split}
\end{equation*}
as $P'\circ(\piG_*)^{-1}=P_0\circ(\piG_*)^{-1}=P$ and $P\circ S_{-g}=P$.

Then  $\tilde{P}$ is $S$-invariant, and for each continuous $\varphi:\MWG\to\R$ we have
\begin{equation*}
\int_{\MW}\varphi\circ\piG_*\,d\tilde{P}
=
\lim_{i\to\infty}\int_{\MW}\varphi\circ\piG_*\,d\bar{P}_{n_i}
=\int_{\MWG}\varphi\,dP\ ,
\end{equation*}
i.e., $P=\tilde{P}\circ(\piG_*)^{-1}$. If $\tilde{P}$ is not ergodic, we can decompose it into its ergodic components, $\tilde{P}=\int_{\MW}\tilde{P}_\nu\,d\tilde{P}(\nu)$.
Then $P=\int_{\MW}\tilde{P}_\nu\circ(\piG_*)^{-1}\,d\tilde{P}(\nu)$, and all $\tilde{P}_\nu\circ(\piG_*)^{-1}$ are again ergodic. As $P$ itself is ergodic, it follows that $P=\tilde{P}_\nu\circ(\piG_*)^{-1}$ for $\tilde{P}$-a.e. $\nu$.
\qed

\paragraph{Proof of Theorem~\ref{theo:Moody-topogical}}\quad\\
a)
Observe first that $m_\hX(\hXmax)=1$ by Theorem~\ref{theo:Moody-extension}.
The $\hT$-invariance of $\hXmax$ follows from
\begin{equation*}
\nuW(\hT_{-g}\hx)(A_n\times H)
=
S_{-g}\nuW(\hx)(A_n\times H)
=
\nuW(\hx)(T_g(A_n\times H))
=
\nuW(\hx)((g+A_n)\times H)
\end{equation*}
and the fact that $(g+A_n)_n$ is a van Hove sequence.\\
b) Let $\hx\in\hXmax=\{\hx\in\hX: \lim_{n\to\infty}\nuW(\hx)(A_n\times H)/m_G(A_n)=D_{\QM}\}$ and
suppose that $Q_{n_i,\hx}$ converges weakly to some probability measue $P$ on $\MW$. Fix a compact zero neighbourhood $B\subseteq G$  as in the proof of Theorem~\ref{theo:Moody-extension} and recall that $\hx\in\hXmax$.
 Then equation (\ref{eq:vanHove-consequence}) implies
\begin{equation*}
\begin{split}
m_G(B)\cdot D_{\QM}&=
\lim_{i\to\infty}m_G(B)\frac{\nuW(\hx)((-A_{n_i})\times H)}{m_G(A_{n_i})}
=
\lim_{i\to\infty}\frac{1}{m_G(A_{n_i})}\int_{A_{n_i}}(S_{g}\nuW(\hx))(B\times H)\,dm_G(g)\\
&=
\lim_{i\to\infty}\frac{1}{m_G(A_{n_i})}\int_{A_{n_i}}\int_{\MW}\nu(B\times H)\,d\delta_{S_{g}\nuW(\hx)}(\nu)\,dm_G(g)
=
\lim_{i\to\infty}\int_{\MW}\nu(B\times H)\,dQ_{n_i,\hx}(\nu)\\
&\leqslant
\int_{\MW}\nu(B\times H)\,dP(\nu)=m_G(B)\cdot D_P\ ,
\end{split}
\end{equation*}
where the inequality holds due to the Portemanteau theorem, as $\{\nu|_{B\times H}: \nu\in\MW\}$ is closed in $\MW$.
Hence 
\begin{equation*}
D_{\QM}
\leqslant
D_P
\leqslant 
\dL\cdot m_H(W)
=
D_{\QM}\ ,
\end{equation*}
This shows that $D_P=D_{\QM}$ and hence $P=\QM$ in view of Theorem~\ref{theo:Moody-extension}.\\
c)
As $Q_{n,\hx}(\MW(\hx))=1$ for all $n\in\N$ and as $Q_{n,\hx}$ converges weakly to $\QM$, we have $1\geqslant Q_M(\MW(\hx))\geqslant\lim_{n\to\infty}Q_{n,\hx}(\MW(\hx))=1$ for each $\hx\in\hXmax$.\\
d) For $\hx\in\nuW^{-1}(\supp(\QM))$ also $\MW(\hx)\subseteq\supp(\QM)$ by translation invariance of $m_\hX$ and closedness of $\supp(\QM)$. Hence 
$\MW(\hx)=\supp(\QM)$ for all $\hx\in\hXmax\cap\nuW^{-1}(\supp(\QM))$, and $m_\hX(\hXmax\cap\nuW^{-1}(\supp(\QM)))=1$, because $m_\hX(\hXmax)=1$ and
$m_\hX\circ\nuW^{-1}=\QM$.
\\
e) Let $\hx\in \CW$.
Suppose for a contradiction that $\nuW(\hx)\not\in\supp(\QM)$. Then there is an open neighbourhood $O\subseteq\cM$ of $\nuW(\hx)$ such that $\QM(O)=m_\hX(\nuW^{-1}(O))=0$. 
As $\hx\in \CW$, there is an open neighbourhood $U\subseteq\hX$ of $\hx$ such that $\nuW(U)\subseteq O$, which implies $m_\hX(U)=0$, a contradiction.
\qed

\paragraph{Proof of Remark~\ref{remark:support}}\quad\\
a) As $\CW$ is a dense $G_\delta$-set, we only have to show that $\nuW(\CW)\subseteq\overline{\nuW(R)}$ for each dense $G_\delta$-set $R\subseteq\hX$.
But this is obvious, because $R$ is dense and $\CW$ is the set of continuity points of $\nuW$.\\
b) 
If $F\subseteq\hX$ has full Haar measure, then $\QM(\nuW(F))=m_\hX(F)=1$, so that $\supp(\QM)\subseteq\overline{\nuW(F)}$. Hence 
$\supp(\QM)\subseteq\bigcap_{F\in\cF}\overline{\nuW(F)}$, where the intersections ranges over the family $\cF$ of all full measure subsets of $\hX$.
For the converse inclusion consider $F=\hXmax\cap\nuW^{-1}(\supp(\QM))$.
Then $F\in\cF$ by Theorem~\ref{theo:Moody-topogical}d, and $\nuW(F)\subseteq\supp(\QM)$ so that also $\overline{\nuW(F)}\subseteq\supp(\QM)$.\\
c) Suppose now that $m_H(\partial W)=0$. Then $\CW\subseteq\hX$ has full Haar measure. Hence assertion b) implies
\begin{equation*}
\overline{\nuW(\CW)}\subseteq\supp(\QM)=\bigcap_{F\in\cF}\overline{\nuW(F)}
\subseteq \overline{\nuW(\CW)}\ .
\end{equation*}
The same proofs apply to $\nuWG$ and $\QMG$.
\qed

\paragraph{Proof of Remark~\ref{remark:patterns} and Proposition~\ref{prop:exceptional-points}}\quad\\
We assume $W\ne\emptyset$ without loss of generality. For both proofs we need the following construction:
Let $U=U_G\times H$ with $U_G\subseteq G$ an open, relatively compact zero neighbourhood, small enough such that $\LL\cap ((-\overline{U_G}+\overline{U_G})\times (-W+W))=\{0\}$. Then the quotient map $G\times H\to \hX$, when restricted to $U\subseteq G\times H$, is \oneone. 
Given two finite disjoint sets $\LL_1,\LL_0\subseteq\LL$, not both empty,
define $O=O(\LL_1,\LL_0,U)\subseteq \MW$ by 
\begin{equation}\label{eq:O-def}
O=\left\{\nu\in \MW: \LL_1\subseteq \supp(\nu)-U,\ \LL_0\cap\supp(\nu)-U=\emptyset\right\}.
\end{equation}
Then $O$ is open in the vague topology. Let $\hat U:=\left\{\hx\in\hX: \exists u\in U\text{ with }\pihX(u)=\hx\right\}$ and note that if such an element $u\in U$ exists, then it is unique. This allows to define
$\xi:\hat U\to U$ by $\xi(\hx)=u\in(\pihX)^{-1}\{u\}$. Then
\begin{equation*}
\begin{split}
\nuW^{-1}(O)
=&
\big\{\hx=x+\LL\in\hX: x\in X,\ \forall\ell_1\in\LL_1\ \exists{\ell}\in\LL\ \exists{u}\in U:\ \ell_1=x+{\ell}-{u}\text{ and }\piH({x+\ell})\in W\\
&\hspace*{2.7cm}\text{and }
\forall\ell_0\in\LL_0\ \forall\ell\in\LL\ \forall u\in U:\ \ell_0=x+\ell-u\ \Rightarrow\ \piH(x+\ell)\not\in W\big\}\\
=&
\big\{\hx=x+\LL\in\hat U: \ \forall\ell_1\in\LL_1: \piH({\xi(\hx)+\ell_1})\in W
\text{ and }
\forall\ell_0\in\LL_0:  \piH(\xi(\hx)+\ell_0)\not\in W\big\}\\
=&
\left\{\hx\in\hat U:\; \forall\ell_1\in\LL_1: \piH(\xi(\hx)+{\ell_1})\in W\text{ and }\forall\ell_0\in\LL_0: \piH(\xi(\hx)+{\ell_0})\not\in W\right\}\\
=&
\left\{\hx\in\hat U: (\xi(\hx))_H\in \bigcap_{\ell_1\in\LL_1}(W-\ell_{1,H})\cap\bigcap_{\ell_0\in\LL_0}(H\setminus(W-\ell_{0,H})) \right\}\ .
\end{split}
\end{equation*}

As the restriction of the quotient map to $U$ is \oneone, we have by the extended Weil formula \cite[Thm.~3.4.6]{RS00}
\begin{equation}\label{eq:revised-1}
\begin{split}
\QM(O)
&=
m_\hX(\nuW^{-1}(O))
=
\dL\cdot m_{G\times H}\left\{u\in U:\ u_H\in \bigcap_{\ell_1\in\LL_1}(W-\ell_{1,H})\cap\bigcap_{\ell_0\in\LL_0}(H\setminus(W-\ell_{0,H}))\right\}\\
&=
\dL\cdot m_G(U_G)\cdot m_H\left(\bigcap_{\ell_1\in\LL_1}(W-\ell_{1,H})\cap\bigcap_{\ell_0\in\LL_0}(H\setminus(W-\ell_{0,H}))\right)\ .
\end{split}
\end{equation}
\begin{proof}[Proof of Remark~\ref{remark:patterns}]
Let $\LL_1\subseteq\LL$ be nonempty finite and $\LL_0=\emptyset$.
We can choose $U_G$ such that $m_{G}(\partial U_G)=0$, compare \cite[Lemma~3.6]{HuckRichard15}. Then $O=O(\LL_1,\emptyset,U)$ is a continuity set for $\QM$, as is seen from the above calculation with $\partial O=\{\nu\in\MW: \LL_1\subseteq \supp(\nu)-\partial U\}$.
Next, note that we have asymptotically as $n\to\infty$
\begin{displaymath}
Q_{n,\hx}(O)=f_n(\LL_1, \nuW(\hx))\cdot m_G(U_G)+o(1)
\end{displaymath}
on any van Hove sequence.
Hence weak convergence in Theorem~\ref{theo:Moody-topogical}b  yields for any $\hx\in \hXmax$ that $\lim_{n\to\infty}Q_{n,\hx}(O)=\QM(O)$ on the associated tempered van Hove sequence. We thus obtain
\begin{equation*}
\begin{split}
f(\LL_1,\nuW(\hx))
&=
\frac{\QM(O)}{m_G(U_G)}
=
\dL\cdot m_H\left(\bigcap_{\ell_1\in\LL_1}(W-\ell_{1,H})\right)\ .
\end{split}
\end{equation*}
\end{proof}

\begin{proof}[Proof of Proposition~\ref{prop:exceptional-points}]\quad\\
For any given point $\hx=x+\LL\in\hX$ and any nonempty finite set $\LL'\subseteq \LL$,  let $\LL_1=\{\ell\in\LL': \ell_H \in -x_H+W\}$ and 
$\LL_0=\LL'\setminus\LL_1$.
Then, for any $h\in H$,
\begin{equation*}
(-h+W)\cap\piH(\LL')=\piH(\LL_1)
\quad\text{if and only if}\quad
h\in \bigcap_{\ell \in\LL_1}(W-\ell_{H})\cap
\bigcap_{\ell\in\LL_0}\left(H\setminus(W-\ell_H)\right)\ .
\end{equation*}
As $\piH(\LL_1)=(-x_H+W)\cap \piH(\LL')$, we thus have
\begin{equation}\label{eq:revised-3}
\left\{h\in H: (-x_H+W)\cap\piH(\LL')=(-h+W)\cap\piH(\LL')\right\}=\bigcap_{\ell \in\LL_1}(W-\ell_{H})\cap
\bigcap_{\ell\in\LL_0}\left(H\setminus(W-\ell_H)\right)\ .
\end{equation}
``$\Longrightarrow$''
Suppose that $\nuW(\hx)\in\supp(\QM)$.
Define $O=O(\LL_1,\LL_0,U)$ as in (\ref{eq:O-def}).
As $O\subseteq\MW$  is open and as $\nuW(\hx)\in O$ by definition of this set, we have $\QM(O)>0$, which implies
in view of (\ref{eq:revised-1}) and (\ref{eq:revised-3}) that
\begin{equation*}
m_H\left\{h\in H: (-x_H+W)\cap\piH(\LL')=(-h+W)\cap\piH(\LL')\right\}>0\ .
\end{equation*}
\noindent ``$\Longleftarrow$''  
Suppose that $\nuW(\hx)\not\in\supp(\QM)$. Then
there is an open neighbourhood $O_0\subseteq\cM$ of $\nuW(\hx)$ such that $\QM(O_0)=m_\hX(\nuW^{-1}(O_0))=0$.
There are finite disjoint sets $\LL_1,\LL_0\subseteq\LL$, not both empty, such that the open set $O=O(\LL_1,\LL_0,U)$ is contained in $O_0$, with $U=U_{G}'\times H$ and $U_G'\subseteq U_G$ a sufficiently small open zero neighborhood, compare (\ref{eq:O-def}). Hence, in view of  (\ref{eq:revised-1}) and (\ref{eq:revised-3}),
\begin{equation*}
m_H\left\{h\in H: (-x_H+W)\cap\piH(\LL')=(-h+W)\cap\piH(\LL')\right\}
=0\ .
\end{equation*}
\end{proof}

\section{Outlook}\label{sec:outlook}
\paragraph{The topological theory}
Theorem~\ref{theo:projection-top} shows that the notion of a maximal equicontinuous factor is of no help for studying the system $(\MW,S)$, when $\inn(W)$, the interior of the window, is empty, because then this factor is trivial. This holds, a fortiori, for the system $(\MWG,S)$ which is a factor of $(\MW,S)$.
On the other hand, in view of Proposition~\ref{prop:projection-homeo}, the system $(\GMW,S)$ has always the maximal equicontinuous factor $(\hX,\hT)$,
and in view of Lemma~\ref{lemma:proj-homeo-1} the same is true for the system $(\GMWG,S)$.

This means that, if we 
want to study more general transitive subsystems $(M,S)$ of $(\cMG,S)$, where
$\cMG$ denotes the space of all locally finite measures on $G$ endowed with the topology of vague convergence, one should look for a way to extend $(M,S)$ (which generalises $(\MWG,S)$) in a natural way to a system that plays the role of $(\GMWG,S)$ (and hence has a maximal equicontinuous factor). A hint how to accomplish this is given
in the proof of Theorem~\ref{theo:projection-top}b. It is shown there that $\GMWG\subseteq\hX\times \MWG$ is a nontrivial joining between these two systems, except if $\card(\MWG)=1$.

So one might attempt to look for
maximal equicontinuous systems that can be joined in a nontrivial and non-redundant way to a general transitive system $(M,S)$. By \emph{non-redundant} we mean that the joining is such that for a dense $G_\delta$-set of points in $\mu\in M$ the $\mu$-section of the joining consists of just one point and that the joining itself is the closure of the set of all these points. Observe that the joining $\GMW\subseteq\hX\times \MW$ does have this property, because the zero-measure, if it belongs to $\MW$,  is the only element in $\MW$ that is joined to more than one point in $\hX$ and $\GMW\setminus(\hX\times\{\0\})$
is dense in $\GMW$.

An equivalent but more appealing way to approach this problem would be to investigate existence and properties of a \emph{generic maximal equicontinuous factor}
of $(M,S)$ (see \cite{HuangYe2012} for the notion of generic factors).
Systems $(M,S)$ generated by relatively compact but non-compact windows might be first candidates to test such ideas.

\paragraph{Ergodic theory} 
If $P$ is an $S$-invariant probability measure on $\GMW$, then the system $(\GMW,P,S)$ has the structure of a fibred random dynamical system 
over the base $(\hX,m_\hX,\hT)$ with fibres which are subsets of copies of 
$\{0,1\}^\LL$. This sets the scene to study entropy properties of such systems along the lines developed e.g.~in \cite{Ward1992},
and \cite{Bogenschuetz1992} can be the starting point for a thermodynamic formalism - at least when $G=\Z$.
Recent results on $\mathscr B$-free systems such as \cite{Kulaga-Przymus2014,BKKL2015} might provide a good testing ground for such ideas.

\paragraph{Non-abelian model sets}

Our analysis carries over to the non-abelian case, see \cite[Ch.~7]{Wolf07} for the basic setting.  Concerning topological results, note that the compact minimal left coset dynamical system $(\hX,\hT)$ might no longer be equicontinuous, but remains distal. We can then still consider the graph dynamical system $(\GMW, S)$, and the results of Sections~\ref{sec:in}, \ref{subsec:top-reg-wind} and Section~\ref{sec:top} remain true, the latter with ``equicontinuous'' replaced by ``distal''.  For measure theoretic results, note that the left invariant quotient measure $m_{\hX}$ on $\hX$ remains unique up to normalisation \cite[Sec.~3.4]{Wolf07}. Hence $\hX$ together with its natural left action is strictly ergodic.  (This is true without cocompactness of the discrete group $\LL$.) Thus Theorem~\ref{theo:projection-mth} still holds -- up to the spectral statement in part a).
Also Theorems \ref{theo:Moody}, \ref{theo:Moody-extension}, \ref{theo:Moody-topogical} remain true with minor adjustments, compare the proof of Theorem~\ref{theo:Moody-extension}. This implies that the entropy estimate for weak model sets \cite{HuckRichard15}, together with its proof,  continues to hold in that case.  Note however that the latter results rely on the existence of a van Hove sequence, see \cite{mr13} for a discussion of this strong form of amenability in the non-abelian setting. 

In the recent work \cite{BHP16}, Schlottmann's approach to the torus parametrisation \cite{Schlottmann00} is revisited for non-abelian regular model sets, with a  measure theoretic focus.  Then diffraction theory is developed in a very general setting, which does not assume amenability  nor cocompactness. In the cocompact case, a pure point diffraction result is obtained for groups $G$ admitting a Gelfand pair. It might be interesting to further explore the relation between suitable types of dynamical and diffraction spectrum for model sets, thereby extending the pure point result of the abelian case as in \cite{BaakeLenz2004}. 
A natural starting point for such an analysis seems diffraction of
lattices, compare the  abelian result \cite{RS15}. Also the recent spectral analysis of odometer actions \cite{DL13} might provide further insight.


\begin{thebibliography}{10}

\bibitem{ABKL14}
J.-B. Aujogue, M.~Barge, J.~Kellendonk, and D.~Lenz.
\newblock {\em Mathematics of Aperiodic Order}, chapter Equicontinuous Factors,
  Proximality and Ellis Semigroup for Delone Sets, pages 137--194.
\newblock Springer Basel, Basel, 2015.

\bibitem{BaakeGrimm13}
M.~Baake and U.~Grimm.
\newblock {\em Aperiodic Order. Vol. 1: A Mathematical Invitation}, volume 149
  of {\em Encyclopedia of Mathematics and its Applications}.
\newblock Cambridge University Press, 2013.

\bibitem{BHP97}
M.~Baake, J.~Hermisson, and P.~A.~B. Pleasants.
\newblock The torus parametrization of quasiperiodic {LI}-classes.
\newblock {\em J.~Phys.~A}, 30:3029--3056, 1997.

\bibitem{BaakeHuck14}
M.~Baake and C.~Huck.
\newblock Ergodic properties of visible lattice points.
\newblock {\em Proc. Steklov Inst. Math.}, 288:165--188, 2015.

\bibitem{BaakeHuckStrungaru15}
M.~Baake, C.~Huck, and N.~Strungaru.
\newblock On weak model sets of extremal density.
\newblock {\em arXiv:1512.07129v2}, 2015.

\bibitem{BaakeLenz2004}
M.~Baake and D.~Lenz.
\newblock {Dynamical systems on translation bounded measures: pure point
  dynamical and diffraction spectra}.
\newblock {\em Ergodic Theory and Dynamical Systems}, 24(6):1867--1893, 2004.

\bibitem{BLM07}
M.~Baake, D.~Lenz, and R.~V. Moody.
\newblock Characterization of model sets by dynamical systems.
\newblock {\em Ergodic Theory Dynam. Systems}, 27:341--382, 2007.

\bibitem{BaakeMoody2004}
M.~Baake and R.~V. Moody.
\newblock {Weighted Dirac combs with pure point diffraction}.
\newblock {\em Journal f\"{u}r die reine und angewandte Mathematik (Crelles
  Journal)}, 573:61--94, 2004.

\bibitem{BMP00}
M.~Baake, R.~V. Moody, and P.~A.~B. Pleasants.
\newblock Diffraction from visible lattice points and kth power free integers.
\newblock {\em Discrete Math.}, 221:3--42, 2000.

\bibitem{bms98}
M.~Baake, R.~V. Moody, and M.~Schlottmann.
\newblock {Limit-(quasi)periodic point sets as quasicrystals with $p$-adic
  internal spaces}.
\newblock {\em J. Phys. A}, 31:5755--5765, 1998.

\bibitem{BKKL2015}
A.~Bartnicka, S.~Kasjan, J.~Ku\l{}aga-Przymus, and M.~Lema\'n{}czyk.
\newblock {${\mathscr B}$-free sets and dynamics}.
\newblock {\em Trans. Amer. Math. Soc., to appear, arXiv:1509.08010}, 2015.

\bibitem{BHP16}
M.~Bj\"orklund, T.~Hartnick, and F.~Pogorzelski.
\newblock Aperiodic order and spherical diffraction.
\newblock {\em arXiv:1602.08928}, 2016.

\bibitem{bogachev2006}
V.~I. Bogachev.
\newblock {\em Measure Theory}, volume~1.
\newblock Springer, 2006.

\bibitem{Bogenschuetz1992}
T.~Bogensch{\"{u}}tz.
\newblock {Entropy, pressure and a variational principle for random dynamical
  systems}.
\newblock {\em Random {\&} Computational Dynamics}, 1(1):99--116.

\bibitem{Bremont2008}
J.~Br{\'{e}}mont.
\newblock {Entropy and maximizing measures of generic continuous functions}.
\newblock {\em Comptes Rendus Mathematique}, 346(3-4):199--201, 2008.

\bibitem{Bufetov2011}
A.~I. Bufetov.
\newblock {Pressure and equilibrium measures for actions of amenable groups on
  the space of configurations}.
\newblock {\em Sbornik: Mathematics}, 202(3):341--350, 2011.

\bibitem{CS13}
F.~Cellarosi and Y.~G. Sinai.
\newblock Ergodic properties of square-free numbers.
\newblock {\em J. Eur. Math. Soc.}, 15:1343--1374, 2013.

\bibitem{DL13}
A.~I. Danilenko and M.~Lema\'n{}czyk.
\newblock Odometer actions of the {H}eisenberg group.
\newblock {\em J. Analyse Math., to appear, arXiv:1305.0285v2}, 2013.

\bibitem{Abdalaoui2014}
E.~H. El~Abdalaoui, J.~Ku\l{}aga-Przymus, M.~Lema\'n{}czyk, and T.~de~la Rue.
\newblock {The Chowla and the Sarnak conjectures from ergodic theory point of
  view}.
\newblock {\em Discrete Cont.~Dyn.~Syst., to appear, arXiv:1410.1673v2}, 2014.

\bibitem{ALR2013}
E.~H. El~Abdalaoui, M.~Lema\'n{}czyk, and T.~de~la Rue.
\newblock {A dynamical point of view on the set of ${\mathscr B}$-free
  integers}.
\newblock {\em Int. Math. Res. Notices}, 16:7258--7286, 2015.

\bibitem{FHK2002}
A.~Forrest, J.~Hunton, and J.~Kellendonk.
\newblock Topological invariants for projection method patterns.
\newblock {\em Memoirs of the Amer. Math. Soc.}, 159(758), 2002.

\bibitem{Furstenberg1967}
H.~Furstenberg.
\newblock {Disjointness in ergodic theory, minimal sets, and a problem in
  diophantine approximation}.
\newblock {\em Mathematical Systems Theory}, 1(1):1--49, 1967.

\bibitem{Herman1983}
M.~R. Herman.
\newblock {Une m{\'{e}}thode pour minorer les exposants de Lyapounov et
  quelques exemples montrant le caract\`ere local d'un th{\'{e}}or\`eme
  d'Arnold et de Moser sur le tore de dimension 2}.
\newblock {\em Commentarii Mathematici Helvetici}, 58:453--502, 1983.

\bibitem{HRB97}
J.~Hermisson, C.~Richard, and M.~Baake.
\newblock A guide to the symmetry structure of quasiperiodic tiling classes.
\newblock {\em J. Physique I}, 7(8):1003--1018, 1997.

\bibitem{HuangYe2012}
W.~Huang and X.~Ye.
\newblock Generic eigenvalues, generic factors and weak disjointness.
\newblock In L.~Bowen, R.~Grigorchuk, and Y.~Vorobets, editors, {\em Dynamical
  Systems and Group Actions}, volume 567, pages 119--142. Amer. Math. Soc.,
  2012.

\bibitem{HuckRichard15}
C.~Huck and C.~Richard.
\newblock On pattern entropy of weak model sets.
\newblock {\em Discrete Comput.~Geom.}, 54:741--757, 2015.

\bibitem{Jager2003}
T.~J{\"{a}}ger.
\newblock {Quasiperiodically forced interval maps with negative Schwarzian
  derivative}.
\newblock {\em Nonlinearity}, 16(4):1239--1255, 2003.

\bibitem{Jenkinson2006}
O.~Jenkinson.
\newblock {Ergodic Optimization}.
\newblock {\em Discrete and Continuous Dynamical Systems}, 15(1):197--224,
  2006.

\bibitem{Kallenberg2001}
O.~Kallenberg.
\newblock {\em Foundations of Modern Probability}.
\newblock Probability and its applications. Springer, second edition, 2001.

\bibitem{Kechris1995}
A.~S. Kechris.
\newblock {\em {Classical Descriptive Set Theory}}, volume 156 of {\em
  {Graduate Texts in Mathematics}}.
\newblock Springer-Verlag, 1995.

\bibitem{Keller1996}
G.~Keller.
\newblock A note on strange nonchaotic attractors.
\newblock {\em Fundamenta Mathematica}, 151:139--148, 1996.

\bibitem{Keller1997b}
G.~Keller.
\newblock {\em {Equilibrium States in Ergodic Theory}}.
\newblock Cambridge University Press, 1998.

\bibitem{Kharazishvili2009}
A.~Kharazishvili.
\newblock {\em Topics in Measure Theory and Real Analysis}, volume~2 of {\em
  Atlantis Studies in Mathematics}.
\newblock Atlantis Press / World Scientific, Amsterdam - Paris, 2009.

\bibitem{Kulaga-Przymus2014}
J.~Ku\l{}aga-Przymus, M.~Lema\'nczyk, and B.~Weiss.
\newblock {On invariant measures for ${\mathscr B}$-free systems}.
\newblock {\em Proceedings of the London Mathematical Society},
  110(6):1435--1474, 2015.

\bibitem{LMS02}
J.-Y. Lee, R.~V. Moody, and B.~Solomyak.
\newblock Pure point dynamical and diffraction spectra.
\newblock {\em Ann. Henri Poincar\'e}, 3(5):1003--1018, 2002.

\bibitem{LenzRichard07}
D.~Lenz and C.~Richard.
\newblock Pure point diffraction and cut and project schemes for measures: The
  smooth case.
\newblock {\em Math. Z.}, 256:347--378, 2007.

\bibitem{Lindenstrauss2001}
E.~Lindenstrauss.
\newblock Pointwise theorems for amenable groups.
\newblock {\em Invent. Math.}, 146(2):259--295, 2001.

\bibitem{Meyer70}
Y.~Meyer.
\newblock {\em Nombres de Pisot, nombres de Salem et analyse harmonique},
  volume 117 of {\em Lecture Notes in Mathematics}.
\newblock Springer, 1970.

\bibitem{Meyer72}
Y.~Meyer.
\newblock {\em Algebraic Numbers and Harmonic Analysis}.
\newblock North-Holland, Amsterdam, 1972.

\bibitem{Misiurewicz1976}
M.~Misiurewicz.
\newblock {A short proof of the variational principle for a $\Z_+^N$ action on
  a compact space}.
\newblock {\em Bull. Acad. Polon. Sc., S\'erie des Sc. Math., Astr. et Phys.},
  XXIV(12):1069--1075, 1976.

\bibitem{Moody97}
R.~V. Moody.
\newblock Meyer sets and their duals.
\newblock In R.~V. Moody, editor, {\em The Mathematics of Long-Range Aperiodic
  Order (Waterloo, ON, 1995)}, volume 489 of {\em NATO Advanced Science
  Institutes Series C: Mathematical and Physical Sciences}, pages 403--441.
  Kluwer Academic Publishers, Dordrecht.

\bibitem{Moody00}
R.~V. Moody.
\newblock Model sets: a survey.
\newblock In F.~Axel, F.~D\'enoyer, and J.~Gazeau, editors, {\em From
  Quasicrystals to More Complex Systems}, EDP Sciences, pages 145--166. Les
  Ulis and Springer, Berlin, 2000.

\bibitem{Moody2002}
R.~V. Moody.
\newblock Uniform distribution in model sets.
\newblock {\em Canad. Math. Bull.}, 45(1):123--130, 2002.

\bibitem{mr13}
P.~M\"uller and C.~Richard.
\newblock Ergodic properties of randomly coloured point sets.
\newblock {\em Canad. J. Math.}, 65:349--402, 2013.

\bibitem{Peckner2012}
R.~Peckner.
\newblock Uniqueness of the measure of maximal entropy for the squarefree flow.
\newblock {\em Israel J. Math.}, 210:335--357, 2015.

\bibitem{RS00}
H.~Reiter and J.~D. Stegeman.
\newblock {\em Classical harmonic analysis and locally compact groups},
  volume~22 of {\em London Mathematical Society Monographs. New Series}.
\newblock The Clarendon Press, Oxford University Press, New York, second
  edition, 2000.

\bibitem{RS15}
C.~Richard and N.~Strungaru.
\newblock Pure point diffraction and {P}oisson summation.
\newblock {\em arXiv:1512.00912}, 2015.

\bibitem{Robinson1996}
E.~A. Robinson, Jr.
\newblock The dynamical properties of {P}enrose tilings.
\newblock {\em Trans. Amer. Math. Soc.}, 348(11):4447--4464, 1996.

\bibitem{Robinson2004}
E.~A. Robinson, Jr.
\newblock Symbolic dynamics and tilings of {$\Bbb R^d$}.
\newblock In {\em Symbolic dynamics and its applications}, volume~60 of {\em
  Proc. Sympos. Appl. Math.}, pages 81--119. Amer. Math. Soc., Providence, RI,
  2004.

\bibitem{Robinson2007}
E.~A. Robinson, Jr.
\newblock A {H}almos-von {N}eumann theorem for model sets, and almost
  automorphic dynamical systems.
\newblock In {\em Dynamics, ergodic theory, and geometry}, volume~54 of {\em
  Math. Sci. Res. Inst. Publ.}, pages 243--272. Cambridge Univ. Press,
  Cambridge, 2007.

\bibitem{Sarnak2011}
P.~Sarnak.
\newblock {Three lectures on the M\"{o}bius Function: Randomness and Dynamics}.
\newblock Available at www.math.ias.edu/files/wam/2011/ \ldots, 2011.

\bibitem{Schlottmann1993}
M.~Schlottmann.
\newblock {\em Geometrische Eigenschaften quasiperiodischer Strukturen}.
\newblock Dissertation, Universit\"at T\"ubingen, 1993.

\bibitem{Schlottmann1998}
M.~Schlottmann.
\newblock Cut-and-project sets in locally compact abelian groups.
\newblock In {\em Quasicrystals and discrete geometry ({T}oronto, {ON}, 1995)},
  volume~10 of {\em Fields Inst. Monogr.}, pages 247--264. Amer. Math. Soc.,
  Providence, RI, 1998.

\bibitem{Schlottmann00}
M.~Schlottmann.
\newblock Generalized model sets and dynamical systems.
\newblock In {\em Directions in mathematical quasicrystals}, volume~13 of {\em
  CRM Monogr. Ser.}, pages 143--159. Amer. Math. Soc., Providence, RI, 2000.

\bibitem{Schreiber71}
J.-P. Schreiber.
\newblock Sur la notion de mod\`eles dans les groupes ab\'eliens localement
  compacts.
\newblock {\em C. R. Acad. Sci. Paris S\'er. A-B}, 272:A30--A32, 1971.

\bibitem{Schreiber73}
J.-P. Schreiber.
\newblock Approximations diophantiennes et probl\`emes additifs dans les
  groupes ab\'eliens localement compacts.
\newblock {\em Bulletin de la S.M.F.}, 101:297--332, 1973.

\bibitem{Stark2003}
J.~Stark.
\newblock {Transitive sets for quasi-periodically forced monotone maps}.
\newblock {\em Dynamical Systems}, 18:351--364, 2003.

\bibitem{S14}
N.~Strungaru.
\newblock {Private communication at the Mini-Workshop: \textit{Dynamical versus
  Diffraction Spectra in the Theory of Quasicrystals}, Oberwolfach}.
\newblock 2014.

\bibitem{Str15}
N.~Strungaru.
\newblock Almost periodic measures and {M}eyer sets.
\newblock arXiv:1501.00945v1, 2015.

\bibitem{Sturman2000}
R.~Sturman and J.~Stark.
\newblock {Semi-uniform ergodic theorems and applications to forced systems}.
\newblock {\em Nonlinearity}, 13:113--143, 2000.

\bibitem{Walters1982}
P.~Walters.
\newblock {\em An Introduction to Ergodic Theory}.
\newblock Springer, 1982.

\bibitem{Ward1992}
T.~Ward and Q.~Zhang.
\newblock {The Abramov-Rokhlin entropy addition formula for amenable group
  actions}.
\newblock {\em Monatshefte f\"{u}r Mathematik}, 44(0):0--11, 1992.

\bibitem{Wolf07}
J.~A. Wolf.
\newblock {\em Harmonic analysis on commutative spaces}, volume 142 of {\em
  Mathematical Surveys and Monographs}.
\newblock American Mathematical Society, Providence, RI, 2007.

\end{thebibliography}
\end{document}